\theoremstyle{plain}
\newtheorem{thm}{Theorem}[section]
\theoremstyle{plain}
\newtheorem{lem}[thm]{Lemma}
\newtheorem{prop}[thm]{Proposition}
\theoremstyle{definition}
\newtheorem{defi}{Definition}[section]
\newtheorem*{rem}{Remark}
\newenvironment{Assumptions}
{%
\setcounter{enumi}{0}

\begin{enumerate}}%
{\end{enumerate} }
{\end{enumerate} }
\newcommand{\eps}{\ensuremath{\varepsilon}}
\newcommand{\R}{\ensuremath{\Bbb{R}}}
\newcommand{\rd}{\ensuremath{\mathbb{R}^d}}
\newcommand{\supp}{\ensuremath{\mathrm{supp}\,}}
\newcommand{\goto}{\ensuremath{\rightarrow}}
\newcommand{\grad}{\ensuremath{\nabla}}
\def\N{{I\!\!N}}
\numberwithin{equation}{section} \allowdisplaybreaks
	\title[weak-in-time formulation]{Stochastic conservation laws: weak-in-time formulation and strong entropy condition}
\date{}
\author[Imran H. Biswas]{Imran H. Biswas}
\address[Imran H. Biswas]{\newline
 Centre for Applicable Mathematics,
 Tata Institute of Fundamental Research,
  P.O.\ Box 6503, GKVK Post Office,
  Bangalore 560065, India}
\email[]{imran@math.tifrbng.res.in}
\author[Ananta K. Majee]{Ananta K. Majee}
\address[Ananta K. Majee]{\newline
 Centre for Applicable Mathematics,
 Tata Institute of Fundamental Research,
  P.O.\ Box 6503, GKVK Post Office,
  Bangalore 560065, India}
\email[]{ananta@math.tifrbng.res.in}
\subjclass[2000]{60H15, 35L65, 35R60, 60E15}
\keywords{Conservation Laws,  Stochastic Forcing, White Noise,
 Entropy Solution, Stochastic Partial Differential Equations.
}
\thanks{The authors would like to profoundly  thank  K.~H.~Karlsen for his help and encouragement.}
\begin{document}

\begin{abstract}
This article is an attempt to complement some recent developments on conservation laws with stochastic forcing. In a pioneering development,  Feng $\&$ Nualart\cite{nualart:2008} have developed the entropy solution theory for such problems and the presence  of stochastic forcing necessitates introduction of {\it strong entropy condition}. However, the authors'  formulation of entropy inequalities are weak-in-space but strong-in-time. In the absence of a-priori path continuity for the solutions, we take a critical outlook towards this formulation and offer an entropy formulation which is weak-in-time and weak-in-space. 
 \end{abstract}

\maketitle

\section{Introduction}
Let $\big(\Omega, P, \mathcal{F}, \{\mathcal{F}_t\}_{t\ge 0} \big)$ be a filtered probability space which satisfies the usual
 hypothesis. We are interested in finding an  $L^2(\R^d)$( or an appropriate function space)-valued predictable process $u(t)$ which satisfies the stochastic partial differential equation 
 \begin{align} du(t,x) + \mbox{div}_x F(u(t,x)) \,dt&= 
 \sigma(x,u(t,x))\,dW(t)\quad t>0, ~ x\in \R^d, \label{eq:brown_stochconservation_laws}
\end{align}
   with the initial condition 
   
   \begin{align}
   \label{initial_cond} u(0,x) = u_0(x), \quad \quad x\in \R^d.
   \end{align} 
    In the above, $W(t)$ is an one-dimensional standard Brownian motion,  $ F:\R\rightarrow \R^d$ is  the flux function,
and $\sigma(x,u)$ is real valued function defined on the domain $\R^d \times \R$.
 
 In the case where $\sigma= 0$, the equation \eqref{eq:brown_stochconservation_laws} becomes a standard scalar
  conservation law with spatial dimension $d$.  It is well-known for conservation laws that solutions (that are obtained by method of characteristics) may develop discontinuities in finite time even when the initial data is smooth.  In other words, the problem \eqref{eq:brown_stochconservation_laws}-\eqref{initial_cond} does not have smooth solutions in general, even when the right hand side is zero.   
  In this situation one has to invoke the notion of weak solutions, but the issues would persist as there could be infinitely many weak solutions to a given problem.  It was a huge step forward for analytical understanding for scalar conservation laws when Kruzkov came up with his idea of entropy solutions.  
   Kruzkov's notion of entropy condition correctly isolates the physically relevant solution in a unique way, and there is a large body of
literature (see \cite{godu,dafermos} and references therein) that has emerged on this subject.

Stochastic conservation laws is a relatively new area of pursuit. It is only recently that conservation laws with stochastic forcing have attracted the attention  
 of many authors (\cite{nualart:2008,KIm2005, risebroholden1997, Vovelle2010, vallet2010, Chen-karlsen2011,xu}), and resulted in significant momentum in the theoretical  development of such problems.  As its deterministic counterpart, it is required to have a weak formulation coupled with an entropy criterion to establish the  wellposedness for such problems.  The equations of type
\eqref{eq:brown_stochconservation_laws} 
     could be interpreted as the equation that describes conservation of physical quantities that are subjected to random force fields
     modeled by diffusion noise.  One of the early work in this direction was \cite{risebroholden1997}, where one dimensional stochastic balance laws
were studied where $\sigma$ is independent of $x$. The authors employed the splitting method to construct approximate solutions, and the approximations were shown to converge to a weak (possibly non-unique) solution. At around the same time, Khanin {et al.} \cite{Sinai1997} published a very influential article  describing some statistical properties of Burger's equations with stochastic forcing. When the noise term on the right hand side is of  additive nature i.e. $\sigma\equiv \sigma(t,x)$, J.~ U.~ Kim\cite{KIm2005} extended Kruzkov's entropy formulation and established the wellposedness for one dimensional problems under the assumption that $\sigma\in C((0,\infty): W_x^{1,\infty})$ and has compact support. The straight forward adaptation of the  deterministic entropy inequalities fails to capture the noise-noise interaction, and the standard mechanism to derive the $L^1$-contraction 
principle does not apply for  general $\sigma$. This issue was finally resolved by Feng $\&$ Nualart\cite{nualart:2008} with the
introduction of the notion of {\em strong entropy } solution.  In \cite{nualart:2008}, the authors established the uniqueness
of strong entropy solution in $L^p$-framework for several space dimension.  The existence, however, was restricted to one space
dimension. We also refer  to the recent articles by Vovelle $\&$ Debussche\cite{Vovelle2010} and by
Chen {\em et al.}\cite{Chen-karlsen2011} for the existence in the multi dimensional case. In \cite{Vovelle2010},
the authors obtain the existence via kinetic formulation. In \cite{Chen-karlsen2011}, the authors use the BV solution framework. In
this paper, we offer a weaker entropy formulation for \eqref{eq:brown_stochconservation_laws} and establish wellposedness
 in the $L^p$-framework. In addition, we refer to \cite{walsh,stroock,Protter1990,parthasarathy,metivier,malek,evanspde} for additional details relevant to the topic.

 In our view, the article \cite{nualart:2008} by Feng $\&$ Nualart is no less than a milestone in the subject and presents a comprehensive theory of entropy solutions for stochastic conservation laws. We draw our primary motivation from \cite{nualart:2008}, but take a critical outlook to the approach and raise a few objections to some of the methods and offer an alternative which we perceive as better suited to the problem.  The ordinary entropy inequalities in the stochastic case do not fully capture the noise-noise interactions and it may not be possible to replicate Kruzkov's approach to get the $L^1$-contraction principle. This issue is resolved by  Feng $\&$ Nualart by introducing an additional condition called  {\it strong entropy condition}. However, the entropy inequalities in \cite{nualart:2008} could be described as weak in space but strong in time. Moreover, the strong entropy condition is related to this formulation and reflects the strong-in-time picture. This formulation easily leads to the 
$L^1$ contraction principle, and uniqueness for such formulation naturally follows. However, the question of existence becomes much more subtle. As its deterministic counterpart, the existence is settled via vanishing viscosity method in 
 \cite{nualart:2008} and this is where our viewpoint deviates from that of  \cite{nualart:2008}. The proof of  existence in \cite{nualart:2008} requires the vanishing viscosity approximates to converge {\em for all} time points and the authors have made attempts to justify the convergence for all time points. However, a careful analysis reveals that convergence is established for {\em almost every} time points. This puts a question mark against the validity of the results in \cite{nualart:2008}. 
 The nature of compactness of  vanishing viscosity approximates finds a perfect match with the entropy formulation which is weak in time and space both, and which would coincide with entropy formulation of  \cite{nualart:2008} if the solution process have continuous sample paths. In  \cite{nualart:2008}, the authors make an attempt to establish path-continuity for the vanishing viscosity limit but there are flaws in the proof.  We have added a separate section in this article where we explain these flaws and describe the implications in details. With this apparent inconsistencies in mind, we find it necessary that entropy inequalities are formulated weak in time and space both, and the strong entropy condition has to be accordingly specified to capture noise-noise interaction. In this article we set out to exactly do that.

         The rest of the paper is organized as follows. In the next section, we describe the technical framework, define the 
notion of strong entropy solution for \eqref{eq:brown_stochconservation_laws}-\eqref{initial_cond} and state the main 
theorems. In Section 3, we establish the uniqueness of strong entropy solution of \eqref{eq:brown_stochconservation_laws}-\eqref{initial_cond} 
by deriving the $L^{1}$ contraction property.  In Section 4, we briefly discuss the wellposedness of vanishing viscosity 
approximation of \eqref{eq:brown_stochconservation_laws} and establish the existence of entropy solution
for \eqref{eq:brown_stochconservation_laws}-\eqref{initial_cond}.  In the Section 5, we show that the vanishing
viscosity solution is indeed a strong entropy solution. Finally, in the last section we describe the issues related to the path continuity of vanishing viscosity limit and its implications. We close this section with a description of the notations and 
symbols and the list of assumptions.

By
$C, K$ etc.,  we mean various constants which may change from line to line.
The Euclidean norm on any $\mathbb{R}^d$-type space is
denoted by $|\cdot|$. Furthermore, let $\Pi_T = (0, T)\times \R^d.$ In the rest of the paper, the following assumptions hold: 

\vspace{.5cm}

\begin{Assumptions}
\item \label{A1} For every $ k = 1,2...,d $, the function $F_k(s) \in C^2(\R) $, and $F_k(s), F_k^\prime(s) $ and $F_k^{\prime\prime}(s)$  have at most polynomial growth in $ s$. 
\item \label{A2} There exists a positive constant $C > 0$ such that 
\begin{align*}|\sigma(y,v)-\sigma(x,u)|\leq C (|u-v|+ |x-y|).
\end{align*} 
\item  \label{A3} There exists  a  nonnegative function $ g\in L^\infty(\R^d)\cap L^2(\R^d)$  such that 
\begin{align*}
    |\sigma(x,u)| \le g(x)(1+|u|).
    \end{align*} 
    \item\label{A4}  The set $\{r\in \R: F^{\prime\prime}(r) \neq 0 \}$ is dense in $\R$.

\end{Assumptions}
\section{Technical framework and statements of the main results}

The notion of entropy solution is built around the so called entropy flux pairs.  We begin this section with the definition of entropy flux pairs.
\begin{defi}[entropy flux pair]
$(\beta,\zeta) $ is called an entropy flux pair if $ \beta \in C^2(\R) $ and $\beta \ge0$, and $\zeta = (\zeta_1,\zeta_2,....\zeta_d):\R \mapsto\rd $ is a vector field satisfying
\[\zeta'(r) = \beta'(r)F'(r). \]
 An entropy flux pair $(\beta,\zeta)$ is called convex if $ \beta^{\prime\prime}(s) \ge 0$.  
\end{defi}

As in the deterministic case, the primary motivation behind the notion of entropy solution comes from parabolic 
regularization. However, it requires  considerable amount of work (cf. \cite{nualart:2008}) to show that perturbation by small diffusion will indeed regularize the solutions.
To proceed, we assume that $ u $  is a smooth predictable solution of the parabolic perturbation 
of \eqref{eq:brown_stochconservation_laws} i.e $u$ satisfies 
\begin{align}
 d u(t,x) + \mbox{div}_x F(u(t,x))\,dt& = \sigma(x,u(t,x))\,dW(t) + \eps\Delta u(t,x)\,dt,
\label{eq:heuristic_parabolic} 
\end{align} where $\eps > 0$ is a small positive number.
As compared to the deterministic case, we need to replace the deterministic chain rule for derivatives by It\^{o} chain rule to derive the entropy inequalities.
Let $(\beta,\zeta ) $ be a convex  entropy flux pair. Then, by It\^{o} formula, we have
\begin{align}
  &d\beta(u(t,x) ) + \mbox{div}_x \zeta(u(t,x))\,dt \notag \\
& = \sigma(x,u(t,x))\beta^{\prime}(u(t,x))\,dW(t)+ \frac{1}{2} \sigma^2(x,u(t,x))\beta^{\prime\prime}(u(t,x))\,dt
\notag \\
& \hspace{2cm}+ \Big(\eps \Delta_{xx}\beta(u(t,x)) -\eps \beta''(u(t,x))|\nabla_x u(t,x)|^2\Big)\,dt .\notag
\end{align}
For  each $0\leq \psi\in C_{c}^{1,2}([0,\infty)\times \rd) $, we apply It\^{o} product rule to obtain 

\begin{align}
  d\big(\beta(u(t,x))\psi(t,x) \big) =& \partial_t\psi(t,x) \beta(u(t,x)) \,dt -\psi(t,x)\mbox{div}_x \zeta(u(t,x))\,dt \notag \\
& +\psi(t,x)\sigma(x,u(t,x))\beta^{\prime}(u(t,x))\,dW(t) 
+ \frac{1}{2}\psi(t,x)\,\sigma^2(x,u(t,x))\beta^{\prime\prime}(u(t,x))\,dt 
\notag \\
& +\psi(t,x) \Big(\eps \Delta_{xx}\beta(u(t,x)) -\eps \beta''(u(t,x))|\nabla_x u(t,x)|^2\Big)\,dt .\notag
\end{align}
It is to be kept in mind that  $\beta$ is non-negative and  convex and $\psi$ is non-negative.  Therefore, for every $T> 0$, we have 
\begin{align}
 \notag 0& \le \langle \beta(u(T,.)),\psi(T,\cdot)\rangle \\  \leq  &\langle \beta(u(0,.)),\psi(0,\cdot)\rangle+ 
 \int_{0}^{T} \langle \zeta(u(r,.)),\nabla_x \psi(r,\cdot)\rangle\,dr
 \notag \\
 &+\int_{(0,T]} \langle \beta(u(r,\cdot)),\partial_t \psi(r,\cdot)\rangle\,dr  
  +\int_{(0,T]} \langle \sigma(\cdot,u(r,\cdot))\beta^{\prime}(u(r,\cdot)),\psi(r,\cdot)\rangle \,dW(r)\notag \\
 &+\frac{1}{2}\int_{(0,T]} \langle \sigma^2(\cdot,u(r,\cdot))\beta^{\prime\prime}(u(r,\cdot)),\psi(r,\cdot)\rangle \,dr+  \mathcal{O}(\eps).\label{eq:entropy_derivation}
\end{align}
Both the left-hand and right-hand sides of the inequality are stable under $\eps \goto 0 $, provided we have $ L_{\mbox{loc}}^p $ type stability of \eqref{eq:heuristic_parabolic} as $\eps \goto 0$. The above inequality leads to the entropy inequalities which are weak in time and space both. 

\begin{defi} [stochastic entropy solution]\label{defi:stochentropsol}
An $ L^2(\rd)$-valued $\{\mathcal{F}_t: t\geq 0 \}$-predictable stochastic process $u(t)= u(t,x)$ is
called a stochastic entropy solution of \eqref{eq:brown_stochconservation_laws} provided \\
(1) for each $ T>0$, $ p=2,3,4......$ \[\sup_{0\leq t\leq T} E[||u(t)||_{p}^{p}] <
\infty. \] 

(2) For  $0\leq \psi\in C_{c}^{1,2}([0,\infty )\times \R^d) $ and each convex entropy pair 
$(\beta,\zeta) $, 
\begin{align}
 &\int_{\R^d} \beta(u_0 (x))\psi(0,x)\,dx+  \int_{\Pi_T} \beta(u(t,x))\partial_t \psi(t,x)\,dt\,dx\notag \\
  & +  \int_{\Pi_T}\zeta( u(t,x)) \cdot \grad \psi(t,x)\,dt\,dx
   + \frac{1}{2} \int_{\Pi_T} \sigma^2(x,u(t,x)) \beta^{\prime \prime}(u(t,x))\psi(t,x)\,dx\,dt\notag \\
 &+  \int_0^T \int_{\R^d} \sigma(x,u(t,x)) \beta^\prime(u(t,x))\psi(t,x)\,dx\,dW(t)
  \ge 0\quad P- \text{a.s.}\notag 
\end{align}
\end{defi} 

In the deterministic case, the entropy inequalities  lead  to
the $L^1$-contraction principle which implies uniqueness. In the stochastic case, however, the entropy inequalities alone do not
seem to give rise to desired $L^1$-contraction principle. The Definition \ref{defi:stochentropsol}  does 
not reveal much about the noise-noise interaction  when one tries to compare two solutions of the same problem.
We refer to \cite{Chen-karlsen2011} for detailed mathematical description of this issue. However, to ensure uniqueness, we
need to arrive  at a version of so-called {\em strong entropy condition} which is compatible with the weak-in-time formulation.   

  Let $\rho$ and $\varrho$ be the standard mollifiers on $\R$ and  $\R^d$ respectively such that
  $\supp(\rho) \subset [-1,0]$ and $\supp(\varrho) = B_1(0)$. For $\delta > 0$ and $\delta_0 > 0$, let $\rho_{\delta_0}(r) = \frac{1}{\delta_0}\rho(\frac{r}{\delta_0})$ and  $\varrho_{\delta}(x) = \frac{1}{\delta^d}\varrho(\frac{x}{\delta})$.
For a nonnegative test function $\psi\in C_c^{1,2}([0,\infty)\times \rd)$ and two positive constants $\delta, \delta_0 $, define 
             
 \begin{align}
\label{eq:doubled-variable} \phi_{\delta,\delta_0}(t,x; s,y) = \rho_{\delta_0}(t-s) \varrho_{\delta}(x-y) \psi(s,y). 
\end{align} 
  Note that $ \rho_{\delta_0}(t-s) \neq 0$ only if $s-\delta_0 \le t\le s$, and therefore $ \phi_{\delta,\delta_0}(t,x; s,y)= 0$ outside  $s-\delta_0 \le t\le s$.

\begin{defi} [stochastic strong entropy solution]\label{strong-entropy-condition}
 An $ L^2(\rd)$-valued $\{\mathcal{F}_t: t\geq 0 \}$-predictable stochastic process $ v(t)=v(t,x)$ is
called a stochastic strong entropy solution of \eqref{eq:brown_stochconservation_laws} provided 
\begin{itemize}
\item[$(i)$] it is a stochastic entropy solution. 
\item[$(ii)$] For each $ L^2(\rd)$-valued $\{\mathcal{F}_t: t\geq 0 \}$-adapted stochastic process $
\tilde{u}(t,x)$ satisfying, for  $ T>0$, $ p=2,3,4......$ \[
\sup_{0\leq t\leq T}E\Big[||\tilde{u}(t)||_{p}^{p}\Big ] < \infty,\]
and for each $\beta \in C^{\infty}(\R) $ such that $\beta^{\prime\prime}$ and $ \beta^{\prime\prime\prime}$ are of compact support and  $ 0\leq \psi\in C_{c}^{\infty}([0,\infty)\times\rd) $,
and \[h(r,s;v,y)=\int_{x}\sigma(x,\tilde{u}(r,x))\beta^{\prime}(\tilde{u}(r,x)-v)\phi_{\delta,\delta_0}(r,x;s,y)\,dx,\]
where $\phi_{\delta,\delta_0} $ is defined by in \eqref{eq:doubled-variable},

\begin{align}
&E\Big[\int_0^T\int_{y}\Big[\int_0^T h(r,s;v,y)\,dW(r)\Big]_{v=v(s,y)}\,dy\,ds\Big] \notag \\
&\leq - E\Big[\int_{\Pi_T}\int_{\Pi_T}  \sigma(x,\tilde{u}(r,x))\sigma(y,v(r,y))\beta^{\prime\prime}(\tilde{u}(r,x)-v(r,y))\notag \\
&\hspace{3cm}\times \phi_{\delta,\delta_0}(r,x,s,y)
 \,dr\,dx \,dy \,ds \Big] +  A(\delta,\delta_0), \notag
\end{align}
where $A(\delta,\delta_0)$ is a function depending on $\beta,~\psi$ such that  $A(\delta,\delta_0)\goto 0 $ as $\delta_0 \goto 0 $.
\end{itemize}

\end{defi} 
\begin{rem}
  The weak-in-time formulation is also manifested in the {\it strong entropy condition}. In our formulation the function $A(\delta,\delta_0)$ plays a similar role as that of $A(s,t)$ in Feng $\&$ Nualart. 
\end{rem}
The above definition does not say anything explicitly about the entropy solution satisfying the initial condition. However, it 
satisfies the initial condition in a certain weak sense. We have the following lemma.
\begin{lem}\label{lem:initial-cond}
 Any entropy solution $u(t,x)$ of \eqref{eq:brown_stochconservation_laws} satisfies the initial condition in the following sense:
  for every non negative $\psi\in C_c^2(\R^d)$ such that $\supp(\psi) = K$,
  \begin{align*}
    \lim_{h\rightarrow 0}E \Big[\frac 1h \int_0^h\int_K| u(t,x) -u_0(x)| \psi(x)\,dx\, dt \Big]= 0. 
  \end{align*}
\end{lem}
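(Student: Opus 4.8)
The plan is to reduce the statement to a family of Kruzkov-type one-sided estimates indexed by constants $k\in\R$, and then to recover the variable reference value $u_0(x)$ by a localisation argument. Since the integrand $|u(t,x)-u_0(x)|\psi(x)$ is nonnegative, it suffices to show that the $\limsup$ as $h\goto 0$ is $\le 0$. The genuinely new ingredient compared with the deterministic theory is that the It\^o correction term and the stochastic integral have to be shown to be harmless.

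First I would derive the key estimate: for every $k\in\R$ and every nonnegative $\varphi\in C_c^2(\rd)$,
\begin{align}
\limsup_{h\goto0} E\Big[\frac1h\int_0^h\int_{\rd}|u(t,x)-k|\varphi(x)\,dx\,dt\Big] \le \int_{\rd}|u_0(x)-k|\varphi(x)\,dx. \label{eq:plan-key}
\end{align}
To this end I insert into the entropy inequality of Definition~\ref{defi:stochentropsol} the test function $\psi(t,x)=\varphi(x)\chi_h(t)$, where $\chi_h$ is the Lipschitz cut-off with $\chi_h(0)=1$, $\chi_h\equiv0$ on $[h,\infty)$ and $\chi_h'\equiv-\tfrac1h$ on $(0,h)$ (a routine mollification in $t$ makes $\psi$ admissible), together with the smooth convex entropy $\beta_\xi(r)=\sqrt{(r-k)^2+\xi^2}-\xi$, which satisfies $|r-k|-\xi\le\beta_\xi(r)\le|r-k|$, $0\le\beta_\xi'\le1$ and $\beta_\xi''\ge0$. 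Taking expectations, the stochastic integral drops out: its integrand is dominated by $g(x)(1+|u|)\varphi$ through assumption \ref{A3}, so by the moment bound of Definition~\ref{defi:stochentropsol}(1) it lies in $L^2(\Omega\times(0,T))$ and the It\^o integral is a genuine martingale with zero mean.

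With this choice the four surviving terms are: the datum term $\int\beta_\xi(u_0)\varphi\,dx$; the time-derivative term, which by the choice of $\chi_h$ equals $-\tfrac1h E[\int_0^h\int\beta_\xi(u)\varphi]$; the flux term $E[\int_{\Pi_T}\zeta_\xi(u)\cdot\grad\varphi\,\chi_h]$; and the \emph{nonnegative} It\^o-correction term $\tfrac12 E[\int_{\Pi_T}\sigma^2\beta_\xi''(u)\varphi\,\chi_h]$. For fixed $\xi$ both the flux and the correction term are integrals over the time slab $(0,h)$ of quantities whose expectation is bounded uniformly in $t\in[0,T]$ — here \ref{A1}, \ref{A3} and the moment bounds supply the integrability, since $\zeta_\xi$ has polynomial growth and $\beta_\xi''$ is bounded — hence they are $O(h)$ and vanish as $h\goto0$. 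Rearranging and letting $h\goto0$ gives \eqref{eq:plan-key} with $|u-k|$ replaced by $\beta_\xi(u)$; the bound $\beta_\xi\ge|\cdot-k|-\xi$ then lets me pass to $\xi\goto0$ and obtain \eqref{eq:plan-key} as stated. It remains to replace the constant $k$ by $u_0(x)$. Fix $\eta>0$, choose $w\in C_c(\rd)$ with $\|u_0-w\|_{L^1(K)}<\eta$, cover $K$ by finitely many balls $B_j$ on which $w$ oscillates by less than $\eta$, and pick $\varphi_j\in C_c^2$ with $\varphi_j\ge0$, $\supp\varphi_j\subset B_j$ and $\sum_j\varphi_j=\psi$. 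On $\supp\varphi_j$ the triangle inequality gives $|u-u_0|\le|u-k_j|+\eta+|w-u_0|$ with $k_j:=w(x_j)$; summing the slab-averages over $j$, applying \eqref{eq:plan-key} to each pair $(\varphi_j,k_j)$, and using $|u_0-k_j|\le|u_0-w|+\eta$ on $B_j$, I arrive at
\begin{align}
\limsup_{h\goto0}E\Big[\frac1h\int_0^h\int_K|u-u_0|\psi\,dx\,dt\Big]\le 2\|u_0-w\|_{L^1(K)}\,\|\psi\|_\infty+2\eta\int_K\psi\,dx\le C\eta. \notag
\end{align}
Letting $\eta\goto0$ forces the $\limsup$ to be $\le 0$, and nonnegativity of the integrand closes the argument.

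The main obstacle I anticipate is the bookkeeping of the two nested limits in the first step: the correction term $\tfrac12\sigma^2\beta_\xi''$ is harmless only because it is integrated over the shrinking slab $(0,h)$ for \emph{fixed} $\xi$, so the limits must be taken in the order $h\goto0$ then $\xi\goto0$, and the lower bound $\beta_\xi\ge|\cdot-k|-\xi$ is exactly what makes this order legitimate. The second delicate point, genuinely stochastic in nature, is verifying that the It\^o integral in Definition~\ref{defi:stochentropsol} has zero expectation for this test pair — whose entropy is only $C^2$ and whose spatial weight is merely compactly supported; this rests on \ref{A3} together with the polynomial moment control, and is the place where the weak-in-time formulation has to be handled with care.
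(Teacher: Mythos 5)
Your route --- Kruzkov constants $k$ with the regularized entropy $\beta_\xi(r)=\sqrt{(r-k)^2+\xi^2}-\xi$, the slab cut-off $\chi_h$ in time, and a partition-of-unity localisation to upgrade the constant $k$ to the variable reference value $u_0(x)$ --- is genuinely different from the paper's proof, and its analytic core is sound. The paper instead first reduces to an $L^2$ statement (Cauchy--Schwarz on the finite-measure set $K$), then uses the \emph{quadratic} entropy $\beta(u)=(u-u_0(y))^2$ with the reference value mollified in space through $\varrho_\delta(x-y)$ and the wedge $\gamma(t)=(h-t)/h$ in time. The quadratic choice makes $\beta''\equiv 2$, so the It\^{o} correction carries no $1/\xi$ blow-up and only two ordered limits are needed ($h\goto 0$, then $\delta\goto 0$, via mean-square continuity of translations), against your three ($h\goto 0$, then $\xi\goto 0$, then $\eta\goto 0$). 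Your treatment of the stochastic integral (mean zero via \ref{A3} and the moment bounds of Definition \ref{defi:stochentropsol}) and your insistence on taking $h\goto 0$ \emph{before} $\xi\goto 0$ are both correct.

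There is, however, one genuine gap: in the paper's framework $u_0$ is an $\mathcal{F}_0$-measurable \emph{random} variable (cf.\ Theorems \ref{thm:uniqueness} and \ref{thm:existence}), and your localisation step fails in that generality. A single deterministic $w\in C_c(\rd)$ cannot approximate a genuinely random $u_0$ in mean: if $u_0\equiv \pm 1$ each with probability $1/2$, then $E\|u_0-w\|_{L^1(K)}\ge |K|$ for \emph{every} deterministic $w$, so $\eta$ cannot be made small. If instead you let $w$ --- and hence the constants $k_j$, the covering, and the partition $\varphi_j$ --- depend on $\omega$, your key estimate no longer applies, since it was derived \emph{after} taking expectations for deterministic $(k,\varphi)$; repairing it would require an almost-sure version of the entropy inequality holding simultaneously in $k$ (a continuity-in-$k$/Kolmogorov-type argument for the stochastic integral), the remark that an $\mathcal{F}_0$-measurable substitution keeps the It\^{o} integrand adapted and mean-zero, and $O(h)$ bounds uniform over a random finite family of constants. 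That is precisely the device the paper exploits implicitly: it inserts the random value $u_0(y)$ directly into the entropy $\beta(u)=(u-u_0(y))^2$, integrates in $y$ against $\varrho_\delta(x-y)$, and controls the resulting error $E\int\int |u_0(x)-u_0(y)|^2\psi(x)\varrho_\delta(x-y)\,dx\,dy$ by translation continuity in the mean, which is valid for random data. So your proof stands when $u_0$ is deterministic, but for the random $u_0$ to which the lemma is actually applied (e.g., inside Lemma \ref{stochastic_lemma_1}) the localisation must be rebuilt along the paper's lines.
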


\begin{proof}
    Since $K$ is of finite measure, it is enough if we instead prove 
   \begin{align}
  \lim_{h\rightarrow 0}E \Big[\frac 1h \int_0^h\int_K |u(t,x) -u_0(x)|^2 \psi(x)\,dx\, dt \Big]= 0.  \label{eq:intial_cond_weak-1}
\end{align}

For $\delta\in (0,1)$, let $K_\delta = \{x: \text{dist}(x,K) \le \delta \}$.  Note that, for any $\delta > 0$,

\begin{align}
 E\int_{K}  | u(t,x) -u_0(x)|^2\psi(x)\,dx 
     &\le 2 E\int_{y\in K_\delta}\int_{x\in K} |u(t,x) -u_0(y)|^2\psi(x)  \varrho_{\delta}(x-y)\,dx\,dy\notag\\
     &\quad+2 E\int_{y\in K_\delta}\int_{x\in K} |u_0(y) -u_0(x)|^2 \psi(x) \varrho_{\delta}(x-y)\,dx\,dy.\label{eq:intial_cond_weak-3}
 \end{align} where $\{\varrho_\delta\}$ is a sequence of mollifiers in $\R^d$. In other words
   \begin{align}
& E\frac{1}{h}\int_0^h\int_{K} |u(t,x) -u_0(x)|^2\psi(x)\,dx \,dt\notag\\
     \le& 2 E\frac{1}{h}\int_0^h\int_{y\in K_\delta}\int_{x\in K}|u(t,x) -u_0(y)|^2\psi(x) \varrho_{\delta}(x-y)\,dx\,dy\,dt\notag\\
     &\quad+2 E\int_{y\in K_\delta}\int_{x\in K} |u_0(y) -u_0(x)|^2 \psi(x) \varrho_{\delta}(x-y)\,dx\,dy.\label{eq:intial_cond_weak-4}
 \end{align} Now let $\psi(t,x)= \gamma(t)\psi(x)\varrho_\delta(x-y)$, where $\gamma(t)= \frac{h-t}{h}$ for $0\le t\le h$. 
 Now, let $\beta(u) = (u -u_0(y))^2$ and $\xi(u)= \int_0^u 2 (r-u_0(y)) F^\prime(r)\, dr = 2\int_0^u r F^\prime(r)\, dr -2u_0(y)(F(u)-F(0))\le C (1+|u_0(y)|^2+ |u|^p) $ 
 for some positive integer $p$. With the  above entropy flux pair $(\beta,\xi)$, we apply Definition  \ref{defi:stochentropsol} and have

\begin{align*}
&E\frac{1}{h}\int_0^h\int_{y\in K_\delta}\int_{x\in K} |u(t,x) -u_0(y)|^2\psi(x)  \varrho_{\delta}(x-y)\,dx\,dy\,dt\\
\le&  E\int_{y\in K_\delta}\int_{x\in K} |u_0(y) -u_0(x)|^2 \psi(x) \varrho_{\delta}(x-y)\,dx\,dy \\
&\quad+ C\delta^{-2}\int_0^hE \int_{y\in K_\delta}\int_{x\in K}(1+|u(r,x)|^p+|u_0(y)|^2) \,dx\,dy\,dr \\
  &+\frac{C^{\prime\prime}}{\delta}\int_0^hE \int_{x\in K} \sigma^2(x,u(r,x)) \,dx\,dr.
\end{align*} Hence by passing to the limit $h\rightarrow 0$, we have

\begin{align}
&\limsup_{h\rightarrow 0} E\frac{1}{h}\int_0^h\int_{y\in K_\delta}\int_{x\in K} |u(t,x) -u_0(y)|^2\psi(x)  \varrho_{\delta}(x-y)\,dx\,dy\,dt\notag\\
\le &  E\int_{y\in K_\delta}\int_{x\in K} |u_0(y) -u_0(x)|^2 \psi(x) \varrho_{\delta}(x-y)\,dx\,dy.\label{eq:intial_cond_weak-5}
\end{align} We combine \eqref{eq:intial_cond_weak-4} and \eqref{eq:intial_cond_weak-5} and obtain

\begin{align}
& \limsup_{h\rightarrow 0} E\frac{1}{h}\int_0^h\int_{K} |u(t,x) -u_0(x)|^2\psi(x)\,dx \,dt \notag\\
\le &4 E\int_{y\in K_\delta}\int_{x\in K} |u_0(y) -u_0(x)|^2 \psi(x) \varrho_{\delta}(x-y)\,dx\,dy\quad \text{for all} ~\delta > 0\label{eq:intial_cond_weak-6}.
\end{align}We now simply let $\delta \rightarrow 0$ in the RHS of \eqref{eq:intial_cond_weak-6} and obtain 
\begin{align*}
\limsup_{h\rightarrow 0} E\frac{1}{h}\int_0^h\int_{K} |u(t,x) -u_0(x)|^2\psi(x)\,dx \,dt\le 0.
\end{align*} Hence \eqref{eq:intial_cond_weak-1} follows as $\psi \ge 0$.
 This completes the proof.
\end{proof}

 Next, we describe a special class of entropy functions that plays an important role in the sequel.  Let $\beta:\R \rightarrow \R$ be a nonnegative smooth function  satisfying 
 \begin{align*}
      \beta(0) = 0,\quad \beta(-r)= \beta(r),\quad \beta^\prime(-r) = -\beta^\prime(-r),\quad \beta^{\prime\prime} \ge 0,
 \end{align*} and 
\begin{align*}
\beta^\prime(r)=\begin{cases} -1\quad \text{when} ~ r\le -1,\\
                             \in [-1,1] \quad\text{when}~ |r|<1,\\
                             +1 \quad \text{when} ~ r\ge 1.
                \end{cases}
\end{align*} For any $\epsilon > 0$, define  $\beta_\epsilon:\R \rightarrow \R$ by 
\begin{align*}
         \beta_\epsilon(r) = \epsilon\beta(\frac{r}{\epsilon}).
\end{align*} Then
\begin{align}\label{eq:approx to abosx}
 |r|-M_1\epsilon \le \beta_\epsilon(r) \le |r|\quad \text{and} \quad |\beta_\eps^{\prime\prime}(r)| \le \frac{M_2}{\epsilon} {\bf 1}_{|r|\le \epsilon},
\end{align} where
\begin{align*}
 M_1 = \sup_{|r|\le 1}\big | |r|-\beta(r)\big |, \quad M_2 = \sup_{|r|\le 1}|\beta^{\prime\prime} (r)|.
\end{align*}

By simply dropping $\epsilon$, for $\beta= \beta_\eps$ ~ we define 
\begin{align*}
&F_k^\beta(a,b)=\int_{b}^a \beta^\prime(\sigma-b)F_k^\prime(\sigma)\,d(\sigma), \\
&F^\beta(a,b)=(F_1^\beta(a,b),F_2^\beta(a,b),...,F_d^\beta(a,b)),\\
&F_k(a,b)= \text{sign}(a-b)(F_k(a)-F_k(b)) ,\\
&F(a,b)= (F_1(a,b),F_2(a,b),....,F_d(a,b)).
\end{align*} 


 We are now ready to state the main results of this paper. 
 
\begin{thm}[uniqueness] \label{thm:uniqueness}
 Let the assumptions \ref{A1}-\ref{A3} be true, and that $\cap_{p=1,2,..} L^p(\rd)$-valued and   $\mathcal{F}_0$-measurable random variable  $ u_0$ satisfies
\[E \Big[||u_0||_{p}^{p} + ||u_0||_{2}^{p}\Big] < \infty  \qquad\text{for}~ p=1,2,...~ .\]
Suppose that $ u$, $ v$ be two stochastic entropy solutions of \eqref{eq:brown_stochconservation_laws} with the same initial condition $ u(0)=u_0 =v(0)$, and that at least one of $ u $,
$ v$ is a strong stochastic entropy solution.  Then almost surely $ u(t)=v(t) $ for almost every $t \ge  0$.  
\end{thm}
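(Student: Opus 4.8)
The plan is to run a Kruzkov doubling-of-variables argument adapted to the stochastic, weak-in-time setting, in which the one-sided time mollifier (recall $\supp(\rho)\subset[-1,0]$) plays a decisive role. After relabelling we may assume $v$ is the strong entropy solution. I would apply the entropy inequality of Definition~\ref{defi:stochentropsol} to $u$ in the variables $(t,x)$, using the Kruzkov entropy $\beta_\eps(\,\cdot\,-v(s,y))$ from \eqref{eq:approx to abosx} with flux $F^\beta(\,\cdot\,,v(s,y))$ and the test kernel $\phi_{\delta,\delta_0}(t,x;s,y)$ of \eqref{eq:doubled-variable}; symmetrically I would apply Definition~\ref{defi:stochentropsol} to $v$ in the variables $(s,y)$ with entropy $\beta_\eps(\,\cdot\,-u(t,x))$ and the same kernel. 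Each inequality holds a.s.\ for a frozen deterministic level; substituting the random level, integrating the first over $(s,y)\in\Pi_T$ and the second over $(t,x)\in\Pi_T$, taking expectations and adding produces one inequality whose terms I organize into noise terms and pathwise (Kruzkov) terms.

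The two stochastic integrals behave oppositely because of the support of $\rho$. On $\supp\phi_{\delta,\delta_0}$ one has $t\le s$, so in the inequality for $v$ the frozen value $u(t,x)$ is $\mathcal F_t$-measurable with $t\le s$; the integrand is therefore predictable and that It\^o integral has zero expectation. In the inequality for $u$ the level $v(s,y)$ instead looks into the future $s\ge t$, the integrand is not adapted, and this is precisely the term controlled by condition $(ii)$ of Definition~\ref{strong-entropy-condition} with $\tilde u=u$: its expectation is bounded by $-E\iint \sigma(x,u)\sigma(y,v)\beta''(u-v)\,\phi_{\delta,\delta_0}+A(\delta,\delta_0)$. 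Adding this cross term to the two It\^o corrections $\tfrac12\sigma^2(x,u)\beta''(u-v)$ and $\tfrac12\sigma^2(y,v)\beta''(v-u)$ and using that $\beta''$ is even yields the perfect square $\tfrac12\big(\sigma(x,u)-\sigma(y,v)\big)^2\beta''(u-v)$. By \ref{A2} and the support constraints $|u-v|\le\eps$, $|x-y|\le\delta$, together with \eqref{eq:approx to abosx}, this is dominated by $C(\eps+\delta)^2\eps^{-1}$ times an integrable factor, a quantity to be sent to zero by the joint choice of scales below.

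For the pathwise terms I use the deterministic calculus. The two temporal terms share the factor $\rho_{\delta_0}'(t-s)$ with opposite signs, and since $\beta_\eps(u-v)=\beta_\eps(v-u)$ these contributions cancel, leaving only $E\iint\beta_\eps(v-u)\rho_{\delta_0}\varrho_\delta\,\partial_s\psi$. Using $\nabla_x\varrho_\delta(x-y)=-\nabla_y\varrho_\delta(x-y)$, the flux terms split into a piece carrying $\big(F^\beta(u,v)-F^\beta(v,u)\big)\cdot\nabla\varrho_\delta$, which is $O(\eps/\delta)$ because $F^\beta(a,b)\to F(a,b)$ and the limit is symmetric in $(a,b)$, plus a piece $F^\beta(v,u)\cdot\nabla\psi\,\varrho_\delta\rho_{\delta_0}$. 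Choosing $\eps,\delta\to0$ with $\delta^2\ll\eps\ll\delta$ (for instance $\eps=\delta^{3/2}$) annihilates both the noise square and the flux remainder, while $\delta_0\to0$ removes $A(\delta,\delta_0)$ and collapses $t\to s$. The surviving initial contribution $E\iint\beta_\eps(u_0(x)-v(s,y))\rho_{\delta_0}(-s)\varrho_\delta(x-y)\psi(s,y)$ tends to $E\int\beta_\eps(u_0-v_0)\psi(0,\cdot)=0$ because $u_0=v_0$, Lemma~\ref{lem:initial-cond} supplying the initial trace of $v$.

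Passing to the limit I obtain the Kato-type inequality
\[
0\le E\int_0^\infty\!\!\int_{\rd}\big[\,|u-v|\,\partial_s\psi+F(u,v)\cdot\nabla_y\psi\,\big]\,dy\,ds,\qquad 0\le\psi\in C_c^\infty([0,\infty)\times\rd).
\]
Taking $\psi(s,y)=\theta(s)\xi_R(y)$ with $\xi_R$ a standard cutoff, the flux integral is bounded by $CR^{-1}E\int_{R\le|y|\le2R}|F(u)-F(v)|$, which vanishes as $R\to\infty$ by the polynomial growth \ref{A1} and the uniform $L^p$ bounds on $u,v$, leaving $0\le E\int_0^\infty\big(\int_{\rd}|u(s)-v(s)|\,dy\big)\theta'(s)\,ds$. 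Letting $\theta$ approximate $\mathbf 1_{[0,\tau]}$ then gives $E\,\|u(\tau)-v(\tau)\|_{L^1(\rd)}\le0$ for a.e.\ $\tau$, i.e.\ $u(\tau)=v(\tau)$ almost surely for a.e.\ $\tau\ge0$. I expect the main difficulty to be twofold: the rigorous insertion of the random Kruzkov level into pathwise a.s.\ inequalities, with the attendant joint-measurability and Fubini justifications, and above all the noise--noise cancellation — assembling the strong-entropy cross term and the It\^o corrections into the square and balancing the three scales $\delta_0,\delta,\eps$ so that every error, in particular $A(\delta,\delta_0)$ and the $O((\eps+\delta)^2/\eps)$ square, disappears in the right order.
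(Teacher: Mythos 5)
Your proposal is correct and follows essentially the same route as the paper's proof of Theorem \ref{thm:uniqueness} via Theorem \ref{thm:L1-contraction}: doubling of variables with the one-sided kernel $\phi_{\delta,\delta_0}$, zero expectation of the adapted It\^o term thanks to $\supp(\rho)\subset[-1,0]$, the strong entropy condition with $\tilde u=u$ for the anticipating integral, cancellation of the $\rho_{\delta_0}'$ terms, assembly of the three noise contributions into $\tfrac12\big(\sigma(x,u)-\sigma(y,v)\big)^2\beta''(u-v)$, and the limits $\delta_0\to0$ first and then $\eps,\delta\to0$ with $\eps/\delta\to0$ and $\delta^2/\eps\to0$ (your $\eps=\delta^{3/2}$ is exactly the paper's $\delta=\eps^{2/3}$), ending with the same Kato inequality and space/time cutoff argument at right Lebesgue points. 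The only cosmetic deviations are your slightly cruder error bound $O\big((\eps+\delta)^2/\eps\big)$ in place of the paper's $O(\eps+\delta^2/\eps)$, which is harmless under the chosen scaling, and the annular cutoff $\xi_R$ in place of the paper's $\phi_n$.
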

 We further assume that $d =1$, and  state the existence theorem of strong entropy solution.
\begin{thm}[existence] \label{thm:existence}
 Let the assumptions \ref{A1}-\ref{A4} be true and  $d =1$. Furthermore, $\cap_{p=1,2,..} L^p(\rd)$-valued $\mathcal{F}_0$-measurable random variable  $ u_0$ satisfies
\[E \Big[||u_0||_{p}^{p} + ||u_0||_{2}^{p}\Big] < \infty  \qquad\text{for}~ p=1,2,...~ .\]
Then there exists a strong entropy solution for \eqref{eq:brown_stochconservation_laws} -\eqref{initial_cond}
\end{thm}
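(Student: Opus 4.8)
The plan is to construct the solution by the vanishing viscosity method, exactly matching the heuristic derivation \eqref{eq:entropy_derivation}. For each $\eps\in(0,1]$ I would first establish wellposedness of the parabolic perturbation \eqref{eq:heuristic_parabolic} by the standard Galerkin-plus-It\^o machinery for semilinear parabolic SPDEs, obtaining a predictable solution $u_\eps$ with enough spatial regularity to justify the It\^o chain rule used in the excerpt. The immediate task is uniform-in-$\eps$ moment control: applying It\^o's formula to $\|u_\eps(t)\|_p^p$, the divergence-form flux term integrates to zero, the viscous term $\eps\Delta u_\eps$ contributes a nonpositive dissipation that can be discarded, and the It\^o correction is dominated using \ref{A3} (so that $\sigma^2\le g^2(1+|u_\eps|)^2$ with $g\in L^\infty\cap L^2$). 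A Gronwall argument then yields $\sup_{t\le T}E\|u_\eps(t)\|_p^p\le C_{p,T}$ for every $p=2,3,\dots$, uniformly in $\eps$, which supplies requirement (1) of Definition~\ref{defi:stochentropsol} for the limit. From the same computation with a convex pair $(\beta,\zeta)$ I would record the entropy-dissipation bound $\eps\,E\!\iint\beta''(u_\eps)|\grad u_\eps|^2\le C$.

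With the uniform moments in hand, the main analytic obstacle is strong compactness of $(u_\eps)$, which is needed to pass the nonlinear quantities $F(u_\eps)$, $\zeta(u_\eps)$ and $\sigma^2(\cdot,u_\eps)\beta''(u_\eps)$ to the limit. I would extract a Young-measure limit $\nu_{\omega,t,x}$ of $(u_\eps)$, pass to the limit in the viscous entropy inequalities to obtain a measure-valued stochastic entropy solution, and then invoke the nondegeneracy assumption \ref{A4} (density of $\{F''\neq0\}$), together with $d=1$, to rule out sustained oscillations and collapse $\nu$ to a Dirac mass $\delta_{u(\omega,t,x)}$. This gives $u_\eps\to u$ strongly in $L^p_{\loc}$ in $(t,x)$ in expectation, along a subsequence. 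The crucial point is that, because the entropy inequality in Definition~\ref{defi:stochentropsol} is integrated against $\psi(t,x)$ in time, only time-averaged (a.e.-$t$) convergence is required; this is precisely what the weak-in-time formulation buys us and is what lets the argument bypass the path-continuity question criticized in \cite{nualart:2008}.

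Passing to the limit in the viscous inequality underlying \eqref{eq:entropy_derivation} then shows $u$ is a stochastic entropy solution: the $\mathcal{O}(\eps)$ and dissipation terms vanish, the deterministic space-time integrals converge by the strong convergence of $u_\eps$ and the polynomial-growth continuity of $\beta,\zeta,\sigma^2$ from \ref{A1} and \ref{A3}, and the It\^o integral converges by the It\^o isometry combined with the $L^2(\Omega\times\Pi_T)$-convergence of its integrand. Hence $u$ satisfies part (2) of Definition~\ref{defi:stochentropsol}, establishing the first half of Definition~\ref{strong-entropy-condition}.

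It remains to verify condition $(ii)$ of Definition~\ref{strong-entropy-condition}, and I expect this to be the principal difficulty. I would prove it first at the viscous level: since $u_\eps$ is a genuine It\^o process, one may legitimately apply It\^o's formula to $\beta(u_\eps(s,y)-v)$, test against the doubled-variable weight $\phi_{\delta,\delta_0}$ from \eqref{eq:doubled-variable}, and identify the cross term $\int_0^T h(r,s;v,y)\,dW(r)$ against the joint quadratic variation of the two stochastic integrals. This cross-variation produces exactly the noise-noise interaction $\sigma(x,\tilde u)\sigma(y,u_\eps)\beta''(\tilde u-u_\eps)$ appearing on the right-hand side of the defining inequality, while the residual is organized into the error $A(\delta,\delta_0)$ and shown to tend to $0$ as $\delta_0\to0$ using the one-sided support $\supp(\rho)\subset[-1,0]$ of the time mollifier. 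Passing $\eps\to0$ with the strong convergence from the compactness step transfers the inequality to $v=u$. The hard part is precisely the correct matching of the It\^o correction of the limit against the doubled-variable noise term while keeping $A(\delta,\delta_0)$ controllable: this is the step where the stochastic forcing obstructs a naive Kruzkov computation, and where the weak-in-time bookkeeping must be carried out with care.
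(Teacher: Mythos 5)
Your proposal is correct in outline and follows essentially the same route as the paper: viscous regularization with uniform moment and entropy-dissipation bounds, a Young-measure limit taken over the predictable $\sigma$-field and collapsed to a Dirac mass using \ref{A4} with $d=1$, limit passage in the viscous entropy inequality (with the It\^{o} isometry controlling the martingale term, as you suggest), and verification of the strong entropy condition at the viscous level, where the mollified anticipative substitution and the It\^{o} expansion over $[s-\delta_0,s]$ (exploiting $\supp(\rho)\subset[-1,0]$) produce the noise--noise cross term plus residuals $A(\delta,\delta_0)\goto 0$ as $\delta_0\goto 0$. The only step you leave implicit is the mechanism of the Dirac reduction, which the paper carries out via the stochastic div-curl lemma of Feng--Nualart applied to the entropy pairs $(u,F)$ and $\big(F,\int_0^u (F^\prime(r))^2\,dr\big)$, together with the equality case of the Cauchy--Schwarz inequality.
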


\begin{rem}
In the sequel it is going to be clear that our results are still valid if the noise is of the form $\sum_{i=1}^{m} \sigma_i(x, u) \,d W_i(t)$. This is a special case of space-time noise $\int_{z}\sigma(u, x, z)\partial_tW(t, \,dz)$ in \cite{nualart:2008}. This space-time noise structure does have close resemblance with L\'{e}vy/ pure jump type noise structure $\int_{z}\tilde{N}(\, dz, \, dt)$. From our recent experience of working with  conservation laws with L\'{e}vy noise, we confidently infer that our results could be extended to the generalized noise structure of \cite{nualart:2008}. 
\end{rem}

\section{Proof of uniqueness}
  The proof of uniqueness follows a line argument that suitably adapts Kruzkov's method of doubling the variables to the stochastic  case. The central idea of the proof is to analyze the evolution of $||u(t)-v(t) ||_{L^1(\R^d)}$ as a random quantity, and then arrive at the conclusion that $E\big(||u(t)-v(t) ||_{L^1(\R^d)}\big)$ decreases as a function of time. In our context also we use doubling of variables, and approximate $||u(s)-v(s) ||_{L^1(\R^d)}$  by $\int_0^T \int_{\R^d\times \R^d} \beta(u(t,x)-v(t,y))\psi(t) \varphi(x,y)\,dx\,dy\,dt$, where $\beta(r)$ is a suitable smooth convex approximation of $|r|$ and $\varphi(x,y)$ is a smooth approximation for $\delta_x(y)$ and $\psi(t)$ is a smooth approximation of $\delta_s(t)$. We will, however, have to handle
additional difficulties due to the stochastic forcing.

Let $ u$ be a stochastic entropy solution and $v$ be a stochastic strong entropy solution to
equation \eqref{eq:brown_stochconservation_laws}. Let $ 0\leq \psi\in C_{c}^{1,2}([0,\infty)\times\rd)$ be given
and $\beta \equiv \beta_\epsilon$ (as described above). For a fixed real number $k\in \R$, $\beta(\cdot-k)$ is
a convex smooth function. Therefore $(\beta(\cdot-k), F^\beta(\cdot, k))$ could be chosen as the corresponding convex
entropy flux pair where $F^{\beta}(a,b)$ is described above. Next, we lay down the entropy inequality for $u(t,x)$ relative to the convex entropy pair $(\beta(\cdot-k), F^\beta(\cdot, k))$ and substitute $k$ by $v(s,y)$ and integrate with respect to $s, y$ to get

\begin{align}
 &\int_{\Pi_T}\int_{\R^d} \beta(u_0(x)-v(s,y))\phi_{\delta,\delta_0}(0,x,s,y)\,dx\,dy\,ds 
 +\int_{\Pi_T} \int_{\Pi_T}\beta(u(t,x)-v(s,y))\partial_t \phi_{\delta,\delta_0}\,dx\,dt\,dy\,ds\notag \\ 
& +\int_0^T\int_{y}\Big[\int_0^T h(r,s,;v,y) dW(r)\Big]_{v=v(s,y)}\,dy\,ds\notag\\
& +\frac{1}{2} \int_{\Pi_T} \int_{0}^T\int_{\R^d}  \sigma^2(x, u(t,x))\beta^{\prime\prime}(u(t,x)-v(s,y)) 
\phi_{\delta,\delta_0}(t,x;s,y)\,dx\,dt\,dy\,ds\notag \\
& +\int_{\Pi_T}\int_{\Pi_T} F^\beta(u(t,x),v(s,y))\grad_x\phi_{\delta,\delta_0}\,dx\,dt\,dy\,ds \geq 0 \label{stochas_entropy_1},
\end{align} where $\Pi_T = [0,T]\times \R^d$ and 
\[h(r,s;v,y)=\int_{x}\sigma(x, u(r,x))\beta^{\prime}(u(r,x)-v)\phi_{\delta,\delta_0}(r,x;s,y)\,dx.\]

Similarly, since $v(s,y)$ is also a stochastic  entropy solution, by substituting $k=u(t,x)$ and integrating with respect to $(t,x)$ we have
\begin{align}
 &\int_{\Pi_T}\int_{\R^d} \beta(v_0(y)-u(t,x))\phi_{\delta,\delta_0}(t,x,0,y)\,dx\,dy\,dt
 +\int_{\Pi_T} \int_{\Pi_T}\beta(v(s,y)-u(t,x))\partial_s \phi_{\delta,\delta_0}\,dy\,ds\,dx\,dt\notag \\ 
& +\int_{\Pi_T} \int_{0}^T\int_{\R^d}\sigma(y,v(s,y))\beta^\prime(v(s,y)-u(t,x))\phi_{\delta,\delta_0}\,dy\,dW(s)\,dx\,dt\notag\\
& +\frac{1}{2} \int_{\Pi_T} \int_{0}^T\int_{\R^d}  \sigma^2(y, v(s,y))\beta^{\prime\prime}(v(s,y)-u(t,x))
\phi_{\delta,\delta_0}(t,x;s,y)\,dy\,ds\,dx\,dt\notag \\
& +\int_{\Pi_T}\int_{\Pi_T} F^\beta(v(s,y),u(t,x))\grad_y\phi_{\delta,\delta_0}\,dx\,dt\,dy\,ds \geq 0 \label{stochas_entropy_2}
\end{align}
Adding the two inequalities \eqref{stochas_entropy_1} and \eqref{stochas_entropy_2} and using the fact that
$\text{supp}~\rho_{\delta_0}\subset
[-\delta_0,0]$, we notice that the terms involving $\partial_s \rho_{\delta_0}$ and $\partial_t \rho_{\delta_0}$ cancel each other and we are left with
\begin{align}
 &\int_{\Pi_T}\int_{\R^d} \beta(u_0(x)-v(s,y))\psi(s,y)\,\rho_{\delta_0}(-s)\varrho_{\delta}(x-y)\,dx\,dy\,ds \notag \\
 & + \int_{\Pi_T}\int_{\Pi_T}\beta(v(s,y)-u(t,x))\partial_s\psi(s,y)\,\rho_{\delta_0}(t-s)\varrho_{\delta}(x-y) \,dy\,ds\,dx\,dt\notag \\ 
 & +\int_{\Pi_T}\int_{\Pi_T} F^\beta(v(s,y),u(t,x))\grad_y\psi(s,y)\,\rho_{\delta_0}(t-s)\varrho_{\delta}(x-y)\,dx\,dt\,dy\,ds \notag \\
 & +\int_{\Pi_T}\int_{\Pi_T} F^\beta(u(t,x),v(s,y))\grad_x\varrho_{\delta}(x-y)\psi(s,y)\rho_{\delta_0}(t-s)\,dy\,ds\,dx\,dt\notag \\
&+\int_{\Pi_T}\int_{\Pi_T} F^\beta(v(s,y),u(t,x))\grad_y\varrho_{\delta}(x-y)\psi(s,y)\rho_{\delta_0}(t-s)\,dx\,dt\,dy\,ds \notag \\
&+\int_0^T\int_{y}\Big[\int_0^T h(r,s,;v,y) dW(r)\Big]_{v=v(s,y)}\,dy\,ds\notag\\
& +\int_{\Pi_T} \int_t^{t+\delta_0}\int_{\R^d}\sigma(y,v(s,y))\beta^\prime(v(s,y)-u(t,x))\phi_{\delta,\delta_0}(t,x;s,y)
\,dy\,dW(s)\,dx\,dt\notag\\
& +\frac{1}{2} \int_{\Pi_T} \int_{0}^T\int_{\R^d}  \sigma^2(y, v(s,y))\beta^{\prime\prime}(v(s,y)-u(t,x))
\phi_{\delta,\delta_0}(t,x;s,y)\,dy\,ds\,dx\,dt\notag \\
& +\frac{1}{2} \int_{\Pi_T} \int_{0}^T\int_{\R^d}  \sigma^2(x, u(t,x))\beta^{\prime\prime}(u(t,x)-v(s,y))
\phi_{\delta,\delta_0}(t,x;s,y)\,dx\,dt\,dy\,ds\notag \\
 &\geq 0.
\end{align}
We now take expectation on both sides and use the property of  $v(s,y)$ as a  strong entropy solution to have
\begin{align}
 &E\Big[\int_{\Pi_T}\int_{\R^d} \beta(u_0(x)-v(s,y))\psi(s,y)\,\rho_{\delta_0}(-s)\varrho_{\delta}(x-y)\,dx\,dy\,ds \Big]\notag \\
 & +E \Big[\int_{\Pi_T} \int_{\Pi_T}\beta(v(s,y)-u(t,x))\partial_s\psi(s,y)\,\rho_{\delta_0}(t-s)\varrho_{\delta}(x-y) \,dy\,dx\,dt\,ds\Big]\notag \\ 
 & +E\Big[\int_{\Pi_T}\int_{\Pi_T} F^\beta(v(s,y),u(t,x))\grad_y\psi(s,y)\,\rho_{\delta_0}(t-s)\varrho_{\delta}(x-y)\,dx\,dt\,dy\,ds\Big] \notag \\
 & +E\Big[\int_{\Pi_T}\int_{\Pi_T} F^\beta(u(t,x),v(s,y))\grad_x\varrho_{\delta}(x-y)\psi(s,y)\rho_{\delta_0}(t-s)\,dx\,dy\,dt\,ds\Big]\notag \\
&+E\Big[\int_{\Pi_T}\int_{\Pi_T} F^\beta(v(s,y),u(t,x))\grad_y\varrho_{\delta}(x-y)\psi(s,y)\rho_{\delta_0}(t-s)\,dx\,dt\,dy\,ds\Big] \notag \\
& +\frac{1}{2} E\Big[ \int_{\Pi_T} \int_{0}^T\int_{\R^d}\sigma^2(x, u(t,x))\beta^{\prime\prime}\Big(u(t,x)-v(s,y)\Big)
\phi_{\delta,\delta_0}(t,x;s,y)\,dx\,dt\,dy\,ds\Big]\notag \\
& +\frac{1}{2}E\Big[\int_{\Pi_T} \int_{0}^T\int_{\R^d} \sigma^2(y, v(s,y))\beta^{\prime\prime}\Big(v(s,y)-u(t,x)\Big)
 \phi_{\delta,\delta_0}(t,x;s,y)\,dy\,ds\,dx\,dt\Big]\notag \\ 
& -E\Big[\int_{\Pi_T}\int_{\Pi_T} \sigma(x,u(t,x))\sigma(y,v(t,y))\beta^{\prime\prime}(u(t,x)-v(t,y))\psi(s,y)\notag \\
& \hspace{5cm} \times \rho_{\delta_0}(t-s)\varrho_{\delta}(x-y) \,dy\,dx\,dt\,ds\Big] + A(\delta,\delta_0)\notag \\
&\equiv I_1 +I_2 +I_3 +I_4+I_5 +I_6 +I_7 +I_8 +  A(\delta,\delta_0)\notag \\
 & \geq 0 \label{stochas_entropy_3}
\end{align}

Now, we estimate each of the terms above as $\delta_0,\delta \rightarrow 0$ and $\beta \rightarrow |\cdot|$. We start with $I_1$.

\begin{lem}\label{stochastic_lemma_1}
 \begin{align}
 \lim_{\delta_0 \goto 0} I_1 = E\int_{\R^d}\int_{\R^d} \beta(u_0(x)-v_0(y))\psi(0,y)\varrho_{\delta} (x-y)\,dx\,dy\notag
 \end{align} and
  \begin{align*}
  \lim_{(\eps,\delta)\rightarrow (0,0)} E \int_{\R^d\times\R^d}\beta_{\eps} \big(u_0(x)-v_0(y)\big) \varrho_{\delta}(x-y) \psi(0,y) \,dx\,dy= E\int_{\R^d} |u_0(x)-v_0(x)|\,\psi(0,x)\,dx. 
 \end{align*}
\end{lem}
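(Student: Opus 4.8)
The plan is to establish the two displayed limits in turn, exploiting that $s\mapsto\rho_{\delta_0}(-s)$ is supported in $[0,\delta_0]$ with $\int_0^{\delta_0}\rho_{\delta_0}(-s)\,ds=\int_{-1}^{0}\rho=1$, so that it acts as an approximate identity concentrating at $s=0^{+}$, while \eqref{eq:approx to abosx} governs the passage $\beta_\eps\to|\cdot|$.

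\emph{First limit.} Because $\int_0^{\delta_0}\rho_{\delta_0}(-s)\,ds=1$, I would write the candidate limit as the same integral against $\rho_{\delta_0}(-s)\varrho_\delta(x-y)$ with the integrand frozen at $s=0$, and then control the difference of integrands
\begin{align*}
&\beta(u_0(x)-v(s,y))\psi(s,y)-\beta(u_0(x)-v_0(y))\psi(0,y)\\
&\quad=\big[\beta(u_0(x)-v(s,y))-\beta(u_0(x)-v_0(y))\big]\psi(s,y)+\beta(u_0(x)-v_0(y))\big[\psi(s,y)-\psi(0,y)\big].
\end{align*}
For the first piece, the Lipschitz bound $|\beta'|\le1$ gives $|\beta(u_0(x)-v(s,y))-\beta(u_0(x)-v_0(y))|\le|v(s,y)-v_0(y)|$; integrating out $x$ through $\int\varrho_\delta(x-y)\,dx=1$ and using $\rho_{\delta_0}(-s)\le\|\rho\|_\infty/\delta_0$ bounds its contribution by $\|\psi\|_\infty\|\rho\|_\infty\,E\big[\tfrac1{\delta_0}\int_0^{\delta_0}\int_K|v(s,y)-v_0(y)|\,dy\,ds\big]$, which vanishes as $\delta_0\goto0$ by Lemma \ref{lem:initial-cond} (applied with a cut-off equal to $1$ on the $y$-support of $\psi$). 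For the second piece, $|\psi(s,y)-\psi(0,y)|\le\delta_0\|\partial_t\psi\|_\infty$ on $[0,\delta_0]$ together with $\beta\le|\cdot|$ give a bound $\delta_0\|\partial_t\psi\|_\infty\,E\int\int|u_0(x)-v_0(y)|\varrho_\delta(x-y)\,dx\,dy$ over the compact $y$-support of $\psi$, finite by the moment hypotheses on $u_0,v_0$, hence $\goto0$.

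\emph{Second limit.} I would first trade $\beta_\eps$ for $|\cdot|$: by \eqref{eq:approx to abosx}, $0\le|r|-\beta_\eps(r)\le M_1\eps$, so the induced error is at most $M_1\eps\,E\int\int\varrho_\delta(x-y)\psi(0,y)\,dx\,dy=M_1\eps\,\|\psi(0,\cdot)\|_{L^1}$, tending to $0$ as $\eps\goto0$ uniformly in $\delta$. It then suffices to show that $E\int\int|u_0(x)-v_0(y)|\varrho_\delta(x-y)\psi(0,y)\,dx\,dy$ converges to $E\int|u_0(x)-v_0(x)|\psi(0,x)\,dx$ as $\delta\goto0$. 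Rescaling $x=y+\delta w$ with $\int\varrho=1$ and invoking the reverse triangle inequality, the difference is dominated by $E\int\int_{B_1}|u_0(y+\delta w)-u_0(y)|\varrho(w)\psi(0,y)\,dw\,dy$; for each fixed $\omega$ this tends to $0$ by continuity of translations in $L^1_{\loc}$, and the dominating function $2\|\psi(0,\cdot)\|_\infty\|u_0\|_{L^1((\supp\psi)+B_1)}$ is $E$-integrable by the moment assumption, so dominated convergence yields the claim.

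The crux is the $v$-increment estimate in the first limit: since no path continuity of $v$ is available, I cannot use a pointwise-in-time value $v(0,y)$, and must instead route the argument through the averaged initial trace furnished by Lemma \ref{lem:initial-cond}. Matching the genuine mollifier weight $\rho_{\delta_0}(-s)$ to the flat time-average $\tfrac1{\delta_0}\int_0^{\delta_0}$ appearing there, via the crude pointwise bound $\rho_{\delta_0}(-s)\le\|\rho\|_\infty/\delta_0$, is precisely what reconciles the weak-in-time formulation with the initial data. The remaining, purely deterministic mollification limit is routine once dominated convergence in $\omega$ is secured by the $L^p$ moment bounds on the data.
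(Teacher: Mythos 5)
Your proof is correct and takes essentially the same route as the paper's: the same two-step scheme, with the $\delta_0$-limit reduced via the pointwise bound $\rho_{\delta_0}(-s)\le \|\rho\|_\infty/\delta_0$ to the time-averaged initial trace of Lemma \ref{lem:initial-cond}, and the $(\eps,\delta)$-limit handled by $\big|\beta_\eps(r)-|r|\big|\le M_1\eps$ together with $L^1$-continuity of translations and dominated convergence. The only (immaterial) difference is that you attach the time-increment $\psi(s,y)-\psi(0,y)$ to the term $\beta(u_0(x)-v_0(y))$ rather than to $\beta(u_0(x)-v(s,y))$ as the paper does, which slightly simplifies the moment bound needed for that piece.
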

\begin{proof}
 The proof is divided into two steps, and in each step,  we will justify the passage to the corresponding limit.\\
\noindent{\textbf{Step 1:}}  In this step  we consider the passage to the limit as $\delta_0 \goto 0$. Let
\begin{align}
 \mathcal{A}_1:=& E\int_{\Pi_T}\int_{\R^d} \beta(u_0(x)-v(s,y))\psi(s,y)\varrho_{\delta}(x-y)\rho_{\delta_0}(-s)\,dx \,dy\,ds\notag\\
&-  E\int_{\R^d}\int_{\R^d} \beta(u_0(x)-v_0(y))\psi(0,y)\varrho_{\delta}(x-y)\,dx \,dy\notag \\
& = E\int_{\Pi_T}\int_{\R^d} \beta(u_0(x)-v(s,y))[\psi(s,y)-\psi(0,y)]\rho_{\delta_0}(-s)\varrho_{\delta}(x-y)\,dx \,dy\,ds\notag\\
& + E\int_{\Pi_T}\int_{\R^d} \Big[\beta(u_0(x)-v(s,y))-\beta(u_0(x)-v_0(y))\Big]\notag \\
&\hspace{4cm}\psi(0,y)\varrho_{\delta}(x-y)\rho_{\delta_0}(-s)\,dx\,dy\,ds.\notag 
\end{align}
Since support $\psi(s,\cdot)\subset K$, we have
\begin{align}
 | \mathcal{A}_1|\le& ||\psi_t||_{\infty}  E\int_{\Pi_T}\int_{\R^d}\chi_{K}(y) \beta(u_0(x)-v(s,y))\,s\,\rho_{\delta_0}(-s)\varrho_{\delta}(x-y)\,dx \,dy\,ds\notag\\
& + ||\beta^{\prime}||_{\infty}  E\int_{\Pi_T}\int_{\R^d} |v(s,y)-v_0(y)|\psi(0,y)\varrho_{\delta}(x-y)\rho_{\delta_0}(-s) \,dx\,dy\,ds\notag \\
\le&||\psi_t||_{\infty}\,||\beta^{\prime}||_{\infty}\delta_0 E\int_{\Pi_T}\int_{\R^d}\chi_{K}(y)(|u_0(x)-v(s,y)|)\rho_{\delta_0}(-s)\varrho_{\delta}(x-y)\,dx \,dy\,ds\notag\\
& + ||\beta^{\prime}||_{\infty}  E\int_{\Pi_T}\int_{\R^d} |v(s,y)-v_0(y)|\psi(0,y)\varrho_{\delta}(x-y)\rho_{\delta_0}(-s)\,dx\,dy\,ds\notag \\
\le&||\psi_t||_{\infty}\,||\beta^{\prime}||_{\infty}\delta_0~  E\int_{\Pi_T}\int_{\R^d}\chi_{K}(y)(|u_0(x)-v(s,y)| )\,\rho_{\delta_0}(-s)\varrho_{\delta}(x-y)\,dx\,dy\,ds\notag\\
& +||\beta^{\prime}||_{\infty}  E\int_{0}^T\int_{K} \psi(0,y) |v(s,y)-v_0(y)|\,\rho_{\delta_0}(-s)\,dy\,ds\notag \\
\le&||\psi_t||_{\infty}\,||\beta^{\prime}||_{\infty}\delta_0~ \Big[||u_0(x)||_{L^1(\R^d)}+  E\int_{0}^T\int_{K}|v(s,y)|\,\rho_{\delta_0}(-s)\,dy\,ds \Big]\notag\\
& +||\beta^{\prime}||_{\infty}  E\int_{0}^T\int_{K} \psi(0,y)|v(s,y)-v_0(y)|\,\rho_{\delta_0}(-s)\,dy\,ds\notag \\
\le&||\psi_t||_{\infty}\,||\beta^{\prime}||_{\infty}\delta_0~ \Big[||u_0(x)||_{L^1(\R^d)}+\sup_{0\le s\le T}E \big(||v(s,\cdot)||_{L_1}\big) \Big]\notag\\
& + C ||\beta^{\prime}||_{\infty}  \frac{1}{\delta_0}\int_{0}^{\delta_0}E\Big(\int_{K} \psi(0,y)|v( r,y)-v_0(y)|\,dy\Big)\, dr.\notag
\end{align} By Lemma \ref{lem:initial-cond},  $\lim_{\delta_0 \rightarrow 0}\frac{1}{\delta_0}\int_{0}^{\delta_0}E\Big(\int_{K} \psi(0,y)|v( r,y)-v_0(y)|\,dy\Big)\, dr =0$. Therefore, $\lim_{\delta_0\rightarrow 0} \mathcal{A}_1 = 0$.

 \vspace{.1cm}
\noindent{\textbf{Step 2:}}  In this step,
 we now establish the second half of the lemma. Note that the sequence $(\beta_{\eps})_\eps$ is a sequence of functions
 that satisfies $ \Big|\beta_\eps(r)-|r|\Big|\le  C \eps $ for any $r\in \R $.  Therefore
 
 \begin{align*}
 & \Big|E \int_{\R^d\times\R^d}\beta_{\eps} \big(u_0(x)-v_0(y)\big) \varrho_{\delta}(x-y) \psi(0,y) \,dx\,dy-
 E\int_{\R^d} |u_0(y)-v_0(y)|\,\psi(0,y)\,dy \Big|\\
 \le & E \int_{\R^d\times\R^d}\big| \beta_{\eps} \big(u_0(x)-v_0(y)\big)-| u_0(x)-v_0(y)|\big|  \varrho_{\delta}(x-y) \psi(0,y) \,dx\,dy\\
 & +   E \int_{\R^d\times\R^d}\big||u_0(x)-v_0(y)|-| u_0(y)-v_0(y)|\big|  \varrho_{\delta}(x-y) \psi(0,y) \,dx\,dy\\
 \le & \text{Const}(\psi) \eps +  E \int_{\R^d\times\R^d}\big|u_0(x)- u_0(y)| \varrho_{\delta}(x-y) \psi(0,y) \,dx\,dy\\
 \le & \text{Const}(\psi) \eps +  ||\psi||_{\infty}E\int_{|z|\le 1} \int_{\R^d}\big|u_0(x)- u_0(x+\delta z)| \varrho(z) \,dx\,dz.
 \end{align*} Note that $ \lim_{\delta \downarrow 0} \int_{\rd} |u_0(x)-u_0(x+\delta z)| \,dx\rightarrow 0 $ for all  $||z||\le 1$, 
therefore by bounded convergence theorem  we have  $\lim_{\delta \downarrow 0}  E\int_{|z|\le 1}\int_{\rd}
|u_0(x)-u_0(x+\delta z)| \varrho(z) \,dx\,dz = 0 .$ This allows us to pass to the limit $(\eps,\delta)\rightarrow (0,0)$ in the last line and establish the second part of the claim.
\end{proof}

\begin{lem}\label{stochastic_lemma_2} It follows that

 \begin{align} \lim_{\delta_0\goto 0}I_2=\,E\int_{\Pi_T}\int_{\R^d}\beta(v(s,y)-u(s,x))\partial_s\psi(s,y)\varrho_{\delta}(x-y)
 \,dy\,dx\,ds\notag
 \end{align} and 
 \begin{align*}
  &\lim_{(\eps,\delta)\rightarrow (0,0)} E\Big[\int_{\Pi_T}\int_{x} \beta_{\eps}(v(s,y)-u(s,x))
  \partial_s \psi(s,y)\varrho_\delta(x-y)\,dx\,dy\,ds\Big] \\
  &\hspace{1cm}=E\Big[\int_{\Pi_T} \big|v(s,x)-u(s,x)\big|
  \partial_s \psi(s,x)\,dx\,ds\Big].
  \end{align*}
\end{lem}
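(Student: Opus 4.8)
The plan is to follow the two-step template of Lemma \ref{stochastic_lemma_1}: first send $\delta_0\goto 0$ to collapse the time mollification onto the diagonal $t=s$, and then send $(\eps,\delta)\goto(0,0)$ exactly as in Step 2 of Lemma \ref{stochastic_lemma_1}. Recall that $I_2 = E\big[\int_{\Pi_T}\int_{\Pi_T}\beta(v(s,y)-u(t,x))\partial_s\psi(s,y)\rho_{\delta_0}(t-s)\varrho_\delta(x-y)\,dy\,dx\,dt\,ds\big]$. Setting $K(t,s):=E\int_{\R^d}\int_{\R^d}\beta(v(s,y)-u(t,x))\partial_s\psi(s,y)\varrho_\delta(x-y)\,dx\,dy$ (finite and integrable in $(t,s)$ by the moment bounds of Definition \ref{defi:stochentropsol}(1) and $|\beta'|\le 1$), we have $I_2=\int_0^T\big(\int_0^T K(t,s)\rho_{\delta_0}(t-s)\,dt\big)\,ds$, so the inner integral is a one-sided mollification in time of $t\mapsto K(t,s)$, and the claimed limit is $\int_0^T K(s,s)\,ds$.

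For the passage $\delta_0\goto 0$ I would estimate $I_2-\int_0^T K(s,s)\,ds$ by splitting it into the principal term $\int_0^T\int_0^T[K(t,s)-K(s,s)]\rho_{\delta_0}(t-s)\,dt\,ds$ plus the boundary remainder $\int_0^T K(s,s)\big(\int_0^T\rho_{\delta_0}(t-s)\,dt-1\big)\,ds$. Since $\supp\rho_{\delta_0}(t-s)\subset\{s-\delta_0\le t\le s\}$, the remainder is supported in $s\in(0,\delta_0)$ and vanishes because $s\mapsto K(s,s)$ is integrable. For the principal term, using $|\beta'|\le 1$ one bounds $|K(t,s)-K(s,s)|$ by $C\,E\int_{K'}|u(t,x)-u(s,x)|\,dx$, where $K'$ is a compact neighbourhood of $\supp\psi$, so the principal term is controlled by $C\sup_{-\delta_0\le h\le 0}\int_0^T E\,\|u(s+h,\cdot)-u(s,\cdot)\|_{L^1(K')}\,ds$.

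The crux — and precisely where the absence of a priori path continuity matters — is this last quantity. Rather than seeking a pointwise-in-time limit (which would require continuity of the paths $t\mapsto u(t,\cdot)$), I would invoke continuity of translation in $L^1$: the moment bounds give $u\in L^1((0,T);L^1(\Omega\times K'))$, whence $\int_0^T E\,\|u(s+h,\cdot)-u(s,\cdot)\|_{L^1(K')}\,ds\goto 0$ as $h\goto 0$. This produces $\lim_{\delta_0\goto 0}I_2=\int_0^T K(s,s)\,ds$, the first assertion; observe that only an integrated, a.e.-in-time statement is obtained, consistent with the weak-in-time philosophy of the paper.

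For the second assertion I would argue as in Step 2 of Lemma \ref{stochastic_lemma_1}. Using $|\beta_\eps(r)-|r||\le M_1\eps$ from \eqref{eq:approx to abosx}, I first replace $\beta_\eps$ by $|\cdot|$ at the cost of a term that is $O(\eps)$. It then remains to pass to the limit $\delta\goto 0$ in $E\int_{\Pi_T}\int_x|v(s,y)-u(s,x)|\partial_s\psi(s,y)\varrho_\delta(x-y)\,dx\,dy\,ds$; I would replace $\partial_s\psi(s,y)$ by $\partial_s\psi(s,x)$ (error $O(\delta)$ by smoothness of $\psi$ and $|x-y|\le\delta$ on $\supp\varrho_\delta$), and then replace $|v(s,y)-u(s,x)|$ by $|v(s,x)-u(s,x)|$, the error being dominated by $E\int_{\Pi_T}\int_x|v(s,y)-v(s,x)|\varrho_\delta(x-y)\,dx\,dy\,ds$. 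Exactly as at the end of Lemma \ref{stochastic_lemma_1}, this vanishes as $\delta\goto 0$ by $L^1$ translation continuity of $v(s,\cdot)$ together with the bounded convergence theorem. The main obstacle throughout is the $\delta_0\goto 0$ step; once recast as $L^1$-in-time translation continuity it becomes routine, while the $(\eps,\delta)$ step is essentially a verbatim repetition of Lemma \ref{stochastic_lemma_1}.
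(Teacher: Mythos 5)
Your proposal is correct and follows essentially the same two-step route as the paper's proof: first collapse $\rho_{\delta_0}$ using $L^1$-in-time translation continuity of $u$ (the paper phrases this as a.s.\ translation continuity plus bounded convergence after the substitution $t\mapsto t+\delta_0 r$), then remove $\beta_\eps$ at cost $O(\eps)$ via the uniform bound $\big|\beta_\eps(r)-|r|\big|\le M_1\eps$ and collapse $\varrho_\delta$ by spatial translation continuity as in Lemma \ref{stochastic_lemma_1}. The only immaterial difference is in the last step, where you shift $v(s,y)$ and $\partial_s\psi(s,y)$ onto the $x$-variable while the paper instead replaces $u(s,x)$ by $u(s,y)$; both variants rest on the same translation-continuity argument.
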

\begin{proof} As before, the proof is divided into two steps and in each of these steps we will justify the corresponding  passage to the limit.\\
\noindent{\textbf{Step 1:}} Firstly,  we consider the passage to the limit as $\delta_0 \goto 0$. For this, let
\begin{align}
 &\mathcal{G}_1:=\Big|E\int_{\Pi_T}\int_{\Pi_T}\beta(v(s,y)-u(t,x))\partial_s\psi(s,y)\rho_{\delta_0}(t-s)\varrho_{\delta}(x-y)\,dy\,ds\,dx\,dt\notag\\
& \hspace{1cm}-E\int_{\Pi_T}\int_{\R^d}\beta(v(s,y)-u(s,x))\partial_s\psi(s,y)\varrho_{\delta}(x-y)\,dy\,dx\,ds\Big|\notag\\
& =\Big|E\int_{s=\delta_0}^T\int_{\R^d}\int_{\Pi_T}\beta(v(s,y)-u(t,x))\partial_s\psi(s,y)\rho_{\delta_0}(t-s)
\varrho_{\delta}(x-y)\,dx\,dt\,dy\,ds\notag\\
&-E\int_{s=\delta_0}^T \int_0^T\int_{\R^d}\int_{\R^d}\beta(v(s,y)-u(s,x))\partial_s\psi(s,y)\varrho_{\delta}(x-y)
\rho_{\delta_0}(t-s)\,dy\,dx\,dt\,ds \Big|+\mathcal{O}(\delta_0)\notag\\
&\le E\int_{s=\delta_0}^T\int_{\R^d}\int_{\Pi_T} \Big | \beta(v(s,y)-u(t,x)) -\beta(v(s,y)-u(s,x))\Big||\partial_s\psi(s,y)|\notag\\
&\hspace{5cm}\times \varrho_{\delta}(x-y)\,\rho_{\delta_0}(t-s)\,dx\,dt\,dy\,ds +\mathcal{O}(\delta_0)\notag
\end{align}
\begin{align}
\mathcal{G}_1& \le C(\beta^{\prime})\,||\partial_s\psi||_{\infty}\, E\Big[\int_{s=\delta_0}^T\int_{\Pi_T} |u(s,x)-u(t,x)|\,
\rho_{\delta_0}(t-s)\,dx\,dt\,ds\Big] +\mathcal{O}(\delta_0)\notag \\
&\le C(\beta^{\prime})\,||\partial_s\psi||_{\infty}  E\Big[\int_{r=0}^1\int_0^T\int_{\rd}|u(t+\delta_0 r,x)-u(t,x)|\,
\rho_{1}(-r)\,dx\,dt\,dr\Big]+\mathcal{O}(\delta_0). \label{stochastic_estimate_1}
\end{align} Note that, $\lim_{\delta_0\downarrow 0 }  \int_0^T\int_{\rd}|u(t+\delta_0 r,x)-u(t,x)|\,
\,dx\,dt\rightarrow 0$ almost surely for all $ r\in [0,1] $. Therefore, by bounded convergence theorem,
$\lim_{\delta_0\downarrow 0}E\Big[\int_0^T\int_{r=0}^1\int_{\rd}|u(t+\delta_0 r,x)-u(t,x)|\,
\rho_{1}(-r)\,dx\,dr\,dt\Big]= 0$, and therefore the first step follows.
\vspace{.5cm}

\noindent{\textbf{Step 2:}}  In this step, we establish the second part of the lemma. For this, let
\begin{align*}
  \mathcal{G}_2(\eps,\delta):&= \Big| E \int_{\Pi_T}\int_{\R^d}\beta(v(s,y)-u(s,x)) \partial_s\psi(s,y)\,
  \varrho_\delta(x-y) \,dx\,dy\,ds\\
  &\hspace{1cm} -E \int_{\Pi_T}\int_{\R^d} |v(s,y)-u(s,x)| \partial_s\psi(s,y)\,
  \varrho_\delta(x-y) \,dx\,dy\,\,ds\Big|\\
  &\le ||\partial_s\psi||_{\infty}\,E\int_{\text{supp}(\psi(s,y))}\int_{\R_x^d} \int_0^T
  \Big| \beta(v(s,y)-u(s,x))- |v(s,y)-u(s,x)|\Big| \\
  &\hspace{6cm}\times\varrho_{\delta}(x-y)\,ds\,dx\,dy.
\end{align*}
As before, note that the sequence $(\beta_{\eps})_\eps$ is a sequence of functions that satisfies 
\begin{align*}
 \Big|\beta_\eps(r)-|r|\Big|\le  C \eps  ~~ \text{ for any }~~r\in \R ,
 \end{align*} we have
\begin{align}
   \mathcal{G}_2(\eps,\delta)\le& ||\partial_s\psi||_{\infty}\,\eps\, C(\psi,T).
\end{align}

Once again, let
\begin{align*}
 \mathcal{G}_3(\delta):&= \Big| E \int_{\Pi_T}\int_{\R^d} |v(s,y)-u(s,x)| \partial_s\psi(s,y)\,
  \varrho_\delta(x-y) \,dx\,dy\,\,ds\\
  &- E \int_0^T \int_{\R^d} |v(s,y)-u(s,y)| \partial_s\psi(s,y)\,
  dy\,ds\Big| \\
  & \le  E \int_{\Pi_T}\int_{\R^d}  |u(s,y)-u(s,x)| \partial_s\psi(s,y)\,
  \varrho_\delta(x-y)\,dxdy\,\,ds\\
  & \goto 0\quad \text{as}\quad \delta \goto 0. \,\, \text{(as in Lemma \ref{stochastic_lemma_1})}
\end{align*} Now

 \begin{align*}
  &\Big|E\Big[\int_{\Pi_T}\int_{x} \beta_{\eps}(v(s,y)-u(s,x))
  \partial_s \psi(s,y)\varrho_\delta(x-y)\,dx\,dy\,ds\Big] \\ &-E\Big[\int_{\Pi_T} \big|v(s,x)-u(s,x)\big|
  \partial_s \psi(s,x)\,dx\,ds\Big]\Big|\\
  \le & \mathcal{G}_2(\eps,\delta)+\mathcal{G}_3(\delta)\le  ||\partial_s\psi||_{\infty}\, C(\psi,T)\eps+\mathcal{G}_3(\delta)
  \goto \, 0~ \text{as}~ (\eps,\delta)\rightarrow (0,0) .
  \end{align*} Hence the second part follows.
\end{proof}
Next we estimate the limit of $I_3$ as $\delta_0 \goto 0$ and $ (\epsilon, \delta) \downarrow(0, 0)$.

\begin{lem}\label{stochastic_lemma_3}
\begin{align}
 \lim_{\delta_0 \goto 0} I_3= E\int_{\R^d}\int_{\Pi_T} F^\beta(v(s,y),u(s,x))\,\grad_y \psi(s,y)
 \,\varrho_{\delta}(x-y)\,dy\,ds\,dx\notag 
  \end{align} and 
   \begin{align*}
 &\lim_{(\eps, \delta)\rightarrow (0,0)} E\int_{\Pi_T}\int_{\R^d}  F^{\beta_{\eps}}(v(s,y),u(s,x))\grad_y \psi(s,y)
 \varrho_\delta(x-y)\,dx\,dy\,ds \\
 = & E\Big[\int_{\Pi_T}\sum_{k=1}^{d} \textrm{sign}(u(s,y)-v(s,y))(F_k(u(s,y))-F_k(v(s,y)))
  \partial_{y_k}\psi(s,y)\,dy\,ds\Big].
 \end{align*}
\end{lem}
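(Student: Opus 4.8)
The plan is to run the same two-step scheme as in the proof of Lemma~\ref{stochastic_lemma_2}, now with the entropy flux $F^\beta$ in place of $\beta$ and the spatial gradient $\grad_y$ in place of $\partial_s$. The only genuinely new feature is that $F^\beta$ is \emph{not} globally Lipschitz: by assumption~\ref{A1} the derivative $F_k'$ grows only polynomially, so the uniform factors $||\beta'||_\infty$ that closed the estimates in Lemma~\ref{stochastic_lemma_2} must be replaced by \emph{local} Lipschitz bounds carrying a polynomial weight. The starting point is the observation that, since $\beta=\beta_\eps$ is even (whence $\beta'(0)=0$), one has $\partial_b F_k^\beta(a,b)=-\int_b^a\beta''(\sigma-b)F_k'(\sigma)\,d\sigma$; because $\beta_\eps''$ is supported in $[-\eps,\eps]$ with $\int|\beta_\eps''|$ bounded uniformly in $\eps$, while $|F_k'|$ is essentially constant on that shrinking support, this gives, for some polynomial exponent $p$,
\[
|F_k^\beta(a,b_1)-F_k^\beta(a,b_2)|\le C\big(1+|b_1|^p+|b_2|^p\big)\,|b_1-b_2|,
\]
uniformly in $a$ and in $\eps\in(0,1)$. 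It is this uniform-in-$\eps$ control that lets the whole argument proceed in parallel with Lemma~\ref{stochastic_lemma_2}.

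For the first limit I would estimate $I_3$ minus its $t=s$ collapse (up to a boundary term of order $\delta_0$, exactly as in Lemma~\ref{stochastic_lemma_2}). Using $\mathrm{supp}\,\rho_{\delta_0}\subset[-\delta_0,0]$, the bound above with $b_1=u(s,x)$, $b_2=u(t,x)$, and $\mathrm{supp}\,\psi(s,\cdot)\subset K$ with $||\grad_y\psi||_\infty<\infty$, the difference is dominated by a constant times
\[
E\int_{\Pi_T}\int_{\Pi_T}\big(1+|u(s,x)|^p+|u(t,x)|^p\big)\,|u(s,x)-u(t,x)|\,\rho_{\delta_0}(t-s)\varrho_{\delta}(x-y)\chi_K(y)\,dx\,dt\,dy\,ds.
\]
After the substitution $t=s+\delta_0r$ I would split the polynomial weight $w:=1+|u(s,x)|^p+|u(t,x)|^p$ at a level $M$: on $\{w\le M\}$ the factor is bounded and the remaining integral tends to $0$ by the $L^1_{\loc}$ time-translation continuity of $u$ already used in Lemma~\ref{stochastic_lemma_2}, while the tail $\{w>M\}$ is $O(1/M)$ uniformly in $\delta_0$ thanks to the uniform moment bounds $\sup_s E||u(s)||_q^q<\infty$ from condition~(1) of Definition~\ref{defi:stochentropsol}; letting $M\to\infty$ closes the first claim.

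For the second limit I would separate the $\eps$-passage from the $\delta$-passage. A direct computation gives $|F_k^{\beta_\eps}(a,b)-\textrm{sign}(a-b)(F_k(a)-F_k(b))|\le C\eps\,(1+|b|^p)$, since the integrand $\beta_\eps'(\sigma-b)-\textrm{sign}(\sigma-b)$ is supported in $|\sigma-b|\le\eps$; integrated against $\varrho_\delta$ and the moments of $u$ this is $O(\eps)$ and hence negligible. The remaining $\delta\goto0$ passage replaces $u(s,x)$ by $u(s,y)$ inside the Kruzkov flux $F_k(a,b)=\textrm{sign}(a-b)(F_k(a)-F_k(b))$; invoking its local Lipschitz bound in the second argument reduces the task to showing that
\[
E\int_{\Pi_T}\int_{\R^d}\big(1+|u(s,x)|^p+|u(s,y)|^p\big)\,|u(s,x)-u(s,y)|\,\varrho_{\delta}(x-y)\chi_K(y)\,dx\,dy\,ds\goto0,
\]
which follows from the spatial $L^1_{\loc}$-translation continuity of $u$ and the moment bounds by the same truncation-at-$M$ device, exactly as for the term $\mathcal{G}_3$ in Lemma~\ref{stochastic_lemma_2}. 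Finally the identity $\textrm{sign}(v-u)(F_k(v)-F_k(u))=\textrm{sign}(u-v)(F_k(u)-F_k(v))$ recasts the limit in the stated form.

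The main obstacle is exactly this polynomial growth coming from~\ref{A1}: every estimate that Lemma~\ref{stochastic_lemma_2} closed with a bounded factor must here be closed by pairing a polynomial weight against the uniform moment bounds, and the decisive technical point is the uniform-in-$\eps$ estimate $|\partial_b F_k^\beta(a,b)|\le C(1+|b|^p)$ --- valid because $\int|\beta_\eps''|$ stays bounded while its support contracts to a point, so the blow-up of $\beta_\eps''$ never actually propagates into the flux estimates.
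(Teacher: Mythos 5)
Your proposal is correct and follows essentially the same two-step scheme as the paper's own proof: the weighted local Lipschitz bound $|F^\beta(a,b)-F^\beta(a,c)|\le K|b-c|(1+|b|^p+|c|^p)$ combined with $L^1$-in-time translation continuity and the uniform moment bounds for the $\delta_0$-collapse, and the estimate $|F^{\beta_\eps}_k(v,u)-\mathrm{sign}(u-v)(F_k(u)-F_k(v))|\le C\eps(1+|u|^p+|v|^p)$ plus spatial translation continuity and the local Lipschitz property of the Kruzkov flux for the $(\eps,\delta)$-limit. The only (immaterial) divergence is that where the paper closes the time-collapse estimate by Cauchy--Schwarz, splitting the modulus of continuity in $L^2$ from the polynomial weight, you close it by truncating the weight at level $M$ with a Chebyshev-type tail bound; your explicit derivation of the uniform-in-$\eps$ Lipschitz constant via $\beta_\eps'(0)=0$ and the uniform bound on $\int|\beta_\eps''|$ is a detail the paper asserts without proof.
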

\begin{proof}  The proof is divided into two steps.

\noindent{\textbf{Step 1:}}  We first verify the passage to the limit as $\delta_0 \goto 0$. 
Note that there exists $p\in N$ such that, for all $a, b,c\in \R$,
\begin{align*}
|F^{\beta}(a,b)-F^{\beta}(a,c)|\le K |b-c|(1+|b|^p+|c|^p). 
\end{align*}Therefore, upon denoting 
\begin{align}
 \mathcal{B}_1:=&\Big|E\int_{\Pi_T}\int_{\Pi_T} F^\beta(v(s,y),u(t,x))\grad_y\psi(s,y)\rho_{\delta_0}(t-s)\varrho_{\delta}(x-y)\,dy\,ds\,dx\,dt\notag \\
& -E\int_{\R^d}\int_{\Pi_T} F^\beta(v(s,y),u(s,x))\,\grad_y \psi(s,y)\,\varrho_{\delta}(x-y)\,dy\,ds\,dx\notag \Big|,
\end{align} we have 
\begin{align*}
  \mathcal{B}_1 \le& \Big|E\int_{s=\delta_0}^T \int_{\R^d}\int_{\Pi_T} F^\beta(v(s,y),u(t,x))\grad_y\psi(s,y)
  \rho_{\delta_0}(t-s)\varrho_{\delta}(x-y)\,dx\,dt\,dy\,ds\notag \\
& -E\int_{s={\delta_0}}^T\int_{\R^d}\int_{t=0}^T\int_{\R^d} F^\beta(v(s,y),u(s,x))\,\grad_y \psi(s,y)
\varrho_{\delta}(x-y)\rho_{\delta_0}(t-s)\,dx\,dt\,dy\,ds \notag \Big|+\mathcal{O}(\delta_0)\\
\le& K ||\nabla_y \psi(s,y)||_\infty E\int_{s={\delta_0}}^T\int_{\R^d}\int_{t=0}^T \big|u(s,x)-u(t,x) \big|(1+|u(s,x)|^p+|u(t,x)|^p)\\
&\hspace{6cm} \times \rho_{\delta_0}(t-s)\,dt\,dx\,ds+\mathcal{O}(\delta_0)\\
&\text{(By Cauchy-Schwartz inequality)} \\\le & C  ||\nabla_y \psi(s,y)||_\infty  \Big[ E\int_{s={\delta_0}}^T 
\int_{\R^d}\int_{t=0}^T \big|u(s,x)-u(t,x) \big|^2\rho_{\delta_0}(t-s) \,dt\,dx\,ds\Big]^{\frac 12}+\mathcal{O}(\delta_0)\\
\le & C  ||\nabla_y \psi(s,y)||_\infty  \Big[ E\int_{r={0}}^1\int_{\R^d}\int_{t=0}^T \big|u(t+\delta_0 r,x)-u(t,x)
\big|^2\rho(-r)\,dt\,dx\,dr\Big]^{\frac 12}+\mathcal{O}(\delta_0).
\end{align*} Note that, $\lim_{\delta_0\downarrow 0 }  \int_0^T\int_{\rd}|u(t+\delta_0 r,x)-u(t,x)|^2\,
\,dx\,dt\rightarrow 0$ almost surely for all $ r\in [0,1] $. Therefore, by bounded convergence theorem, $\lim_{\delta_0\downarrow 0}E\Big[\int_{r=0}^1\int_0^T\int_{\rd}|u(t+\delta_0 r,x)-u(t,x)|^2\,
\rho(-r)\,dx\,dt\,dr\Big]= 0$, and therefore the first step follows.
\vspace{.2cm}

 \noindent{\textbf{Step 2:}} 
 In this step we establish the second part of the lemma. 
Note that $F_k'(s)$ has at most polynomial growth in $ s\in\R $. It follows from direct computation that there exists
$p\in \N$ such that for all $u,v\in \R$ and $\beta = \beta_{\eps}$,
\begin{align}
\label{eq:mod_approx_1}\Big|F^{\beta_\eps}_k(v,u) -\text{sign}(u-v)((F_k(u)-F_k(v))\Big|\leq \eps C_p(1+|u|^p+|v|^p). 
\end{align} Therefore
\begin{align}
 &\Big|-E\Big[\int_{\R^d}\int_{\Pi_T} F^{\beta_{\eps}}(v(s,y),u(s,x))\,\grad_y \psi(s,y)\,\varrho_{\delta}(x-y)\,dy\,ds\,dx\notag \\
  &~~ +\int_{\R^d}\int_{\Pi_T} \sum_{k=1}^d \textrm{sign}(u(s,x)-v(s,y))(F_k(u(s,x))-F_k(v(s,y))\partial_{y_k}
  \psi(s,y)\,\varrho_{\delta}(x-y)\,dy\,ds\,dx\Big]\Big|\notag\\
 &\le E\Big[\int_{\R^d}\int_{\Pi_T}\sum_{k=1}^d\Big|F_k^{\beta_\eps}(v(s,y),u(s,x)) - \textrm{sign}(u(s,x)-v(s,y))(F_k(u(s,x))-F_k(v(s,y))\Big |\notag\\
 &\hspace{4cm} \times  |\partial_{y_k} \psi(s,y)|\,\varrho_{\delta}(x-y)\,dy\,ds\,dx\Big]\notag \\
&\le \text{Const}(\psi)\,\eps \,\Big[ 1 + \sup_{0\le s\le T}E ||v(s)||_p^p + \sup_{0\le s\le T}E ||u(s)||_p^p \Big] \notag\\
& \quad \goto 0 ~~ \text{as} \quad \eps \goto 0. 
  \end{align}
  
Since $\psi $ is smooth test function and $ F_k$'s are smooth and have polynomially growing derivatives, it is easy to verify that $F(u,v)= \mbox{sign}(u-v)(F(u)-F(v))$ is locally Lipschitz and 
\[|F(u,v)-F(\tilde{u},v)|\le C|u-\tilde{u}|(1+|u|^p+|\tilde{u}|^p).\]
Therefore, we can employ dominated convergence theorem and  conclude
\begin{align*}
 &\Big| E\Big[\int_{\Pi_T}\int_{\R^d}\sum_{k=1}^{d} \textrm{sign}(u(s,x)-v(s,y))(F_k(u(s,x))-F_k(v(s,y)))
  \partial_{y_k}\psi(s,y)\varrho_{\delta}(x-y)dx\,dy\,ds\Big] \\
  &\qquad - E\Big[\int_{\Pi_T}\sum_{k=1}^{d} \textrm{sign}(u(s,y)-v(s,y))(F_k(u(s,y))-F_k(v(s,y))
  \partial_{y_k}\psi(s,y)\,dy\,ds\Big] \Big| = \mathcal{O}(\delta). 
\end{align*} Therefore 
 \begin{align*}
 &\Big| E\int_{\Pi_T}\int_{\R^d}  F^{\beta_{\eps}}(v(s,y),u(s,x))\grad_y \psi(s,y)
 \varrho_\delta(x-y)\,dx\,dy\,ds \\
  &\qquad-E\Big[\int_{\Pi_T}\sum_{k=1}^{d} \textrm{sign}(u(s,y)-v(s,y))(F_k(u(s,y))-F_k(v(s,y)))
  \partial_{y_k}\psi(s,y)\,dy\,ds\Big]\Big| \\ &\le  \text{Const}(\psi)\,\eps + \mathcal{O}(\delta) ~\goto 
  0 ~\text{as}~~(\eps, \delta)\goto (0,0).
 \end{align*}
\end{proof}

\begin{lem}\label{stochastic_lemma_4}
It holds that 

 \begin{align} 
  \lim_{\eps\downarrow 0, \frac{\eps}{\delta}\downarrow 0, \delta\downarrow 0} \lim_{\delta_0\downarrow 0} (I_4 + I_5) =0.
   \end{align}
\end{lem}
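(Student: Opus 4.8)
The plan is to use the antisymmetry $\grad_y\varrho_\delta(x-y) = -\grad_x\varrho_\delta(x-y)$ to merge $I_4$ and $I_5$ into a single integral whose kernel is the flux difference $F^\beta(u,v)-F^\beta(v,u)$, and then to show that this difference is of order $\eps$, so that it is killed against the factor $\grad_x\varrho_\delta$ (whose $L^1$-norm is of order $1/\delta$) precisely because the outer limit enforces $\eps/\delta\downarrow 0$. This is the stochastic counterpart of the classical Kruzkov cancellation of the convective doubling terms.

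First I would pass to the limit $\delta_0\downarrow 0$ for fixed $\delta>0$. Here $\grad_x\varrho_\delta$ is just a fixed bounded weight, so the passage is carried out for $I_4$ and $I_5$ separately, exactly as in Step 1 of Lemma~\ref{stochastic_lemma_3}: since $\rho_{\delta_0}(t-s)$ concentrates on $s-\delta_0\le t\le s$, the local Lipschitz bound $|F^\beta(a,b)-F^\beta(a,c)|\le K|b-c|(1+|b|^p+|c|^p)$, the averaged time-continuity $\int_0^T\int_{\rd}|u(t+\delta_0 r,x)-u(t,x)|^2\,dx\,dt\goto 0$, and a Cauchy--Schwarz plus bounded-convergence argument let me replace $u(t,x)$ by $u(s,x)$. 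Writing both contributions with a common $\grad_x\varrho_\delta$ via $\grad_y\varrho_\delta = -\grad_x\varrho_\delta$, this yields
\begin{align*}
\lim_{\delta_0\downarrow 0}(I_4+I_5) = E\int_0^T\!\int_{\rd}\!\int_{\rd}\big[F^\beta(u(s,x),v(s,y))-F^\beta(v(s,y),u(s,x))\big]\grad_x\varrho_\delta(x-y)\,\psi(s,y)\,dx\,dy\,ds.
\end{align*}

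The crucial point is that both orderings of the flux approximate the \emph{same} symmetric Kruzkov flux. Applying \eqref{eq:mod_approx_1} with the arguments in both orders shows that $F_k^{\beta_\eps}(u,v)$ and $F_k^{\beta_\eps}(v,u)$ each lie within $\eps\,C_p(1+|u|^p+|v|^p)$ of $\text{sign}(u-v)(F_k(u)-F_k(v))$, whence
\begin{align*}
\big|F^{\beta_\eps}(u,v)-F^{\beta_\eps}(v,u)\big|\le 2\eps\,C_p\,(1+|u|^p+|v|^p).
\end{align*}
Since $\int_{\rd}|\grad_x\varrho_\delta(x-y)|\,dx\le C/\delta$ and $\psi$ has compact support, inserting this bound into the displayed expression and invoking the uniform moment bounds $\sup_{0\le s\le T}E\|u(s)\|_p^p$ and $\sup_{0\le s\le T}E\|v(s)\|_p^p<\infty$ gives
\begin{align*}
\Big|\lim_{\delta_0\downarrow 0}(I_4+I_5)\Big|\le C(\psi,T)\,\frac{\eps}{\delta}\Big(1+\sup_{0\le s\le T}E\|u(s)\|_p^p+\sup_{0\le s\le T}E\|v(s)\|_p^p\Big),
\end{align*}
which tends to $0$ under the prescribed limit $\eps\downarrow 0$, $\eps/\delta\downarrow 0$, $\delta\downarrow 0$.

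The main obstacle is the balancing in the last step: individually $I_4$ and $I_5$ behave like $1/\delta$ and do not converge as $\delta\downarrow 0$, so no estimate of a single term can work. Everything hinges on recognizing that the two flux terms share the same symmetric limit, making their difference genuinely $O(\eps)$ rather than $O(1)$; this is exactly what allows the scaling $\eps/\delta\downarrow 0$ to defeat the singular weight $\grad_x\varrho_\delta$. A secondary technical point is verifying that the moment integral $\int_{\rd}\int_{\rd}(1+|u(s,x)|^p+|v(s,y)|^p)|\grad_x\varrho_\delta(x-y)|\psi(s,y)\,dx\,dy$ is $O(1/\delta)$, which follows by Fubini together with $\int_{|x-y|\le\delta}\psi(s,y)\,dy\le\|\psi\|_\infty\,\omega_d\,\delta^d$ and the $L^p$ moment bounds.
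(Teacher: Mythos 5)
Your proposal is correct and follows essentially the same route as the paper: pass to the limit $\delta_0\downarrow 0$ as in the preceding lemmas, use $\grad_y\varrho_\delta(x-y)=-\grad_x\varrho_\delta(x-y)$ to reduce $I_4+I_5$ to the symmetry defect $F^{\beta_\eps}(u,v)-F^{\beta_\eps}(v,u)$, bound that defect by $C\eps(1+|u|^p+|v|^p)$ via the triangle inequality through the symmetric Kruzkov flux \eqref{eq:mod_approx_1}, and conclude with the $\eps/\delta$ scaling against $\|\grad\varrho_\delta\|_{L^1}\le C/\delta$. This matches the paper's argument step for step, including the final bound of order $\eps/\delta$.
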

\begin{proof}
 We can use the same reasoning as before and pass to the limit $\delta_0\downarrow 0$  and  conclude
 \begin{align}
  &\lim_{\delta_0 \goto 0}(I_4 + I_5) \notag \\
  &= E\Big[\int_{\R^d}\int_{\Pi_T}\Big\{ F^\beta(u(s,x),v(s,y))\,\grad_x \varrho_{\delta}(x-y)+ F^\beta(v(s,y),u(s,x))\notag \\
 &\hspace{4cm}  \times \,\grad_y \varrho_{\delta}(x-y)\Big\}\psi(s,y)\,dy\,ds\,dx \Big]\notag \\
  & = E\Big[\int_{\R^d}\int_{\Pi_T}\Big\{ -F^\beta(u(s,x),v(s,y))+F^\beta(v(s,y),u(s,x))\,\Big\} \cdot \grad_y \varrho_{\delta}(x-y)\psi(s,y)\,dy\,ds\,dx \Big].\notag 
 \end{align} Note that, there exists $p\in \N$, such that for all $a,b\in \R$
 \begin{align*}
 & \Big|F^{\beta_\eps}_k(a,b)-F^{\beta_\eps}_k(b,a)\Big| \\
  & \le |F_k^{\beta_\eps}(a,b)-\text{sign}(a-b)(F_k(a)-F_k(b))| + |F_k^{\beta_\eps}(b,a)-\text{sign}(b-a)(F_k(b)-F_k(a))| \\
  & \le C \eps (1+|a|^p+|b|^p).
 \end{align*} Therefore, 
 \begin{align*}
 &\Big|  E\Big[\int_{\R^d}\int_{\Pi_T}\Big\{ F^\beta(u(s,x),v(s,y))-F^\beta(v(s,y),u(s,x))\,\Big\} \cdot \grad_y \varrho_{\delta}(x-y)\psi(s,y)\,dy\,ds\,dx \Big]\Big |
 \\ 
 &\le  \eps C  E\Big[\int_{\R^d}\int_{\Pi_T} (1+|u(s,x)|^p+|v(s,y)|^p) |\nabla_y \varrho_{\delta} (x-y)|\psi(s,y)\,dy\,ds\,dx\Big]\\
&\le   \frac{\eps}{\delta} C\qquad \goto 0 \quad \text{when}\quad (\eps, \frac \eps\delta,\delta)\rightarrow (0,0,0).
 \end{align*}
 Hence the lemma follows.
\end{proof}

\begin{lem}\label{stochastic_lemma_5} The following holds:

\begin{align}
 &\lim_{\delta_{0}\goto 0} I_6 = \frac{1}{2}E \int_{\Pi_T} \int_{\R^d}\sigma^2(x,u(s,x)) \beta^{\prime\prime} (u(s,x)-v(s,y))
 \psi(s,y)\varrho_{\delta}(x-y)\,dx\,dy \,ds\label{eq:I6-delta}
\end{align} and 
\begin{align}
 \lim_{\delta_{0}\goto 0} I_7 =\frac{1}{2} E \int_{\Pi_T} \int_{\R^d}\sigma^2(y,v(s,y)) \beta^{\prime\prime} (v(s,y)-u(s,x))
 \psi(s,y)\varrho_{\delta}(x-y)\,dx\,dy \,ds.\label{eq:I7-delta}
\end{align}
\end{lem}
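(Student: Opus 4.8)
The plan is to follow the same pattern as Step~1 of Lemma~\ref{stochastic_lemma_3}: the sole role of $\rho_{\delta_0}(t-s)$ is to average $u$ over the time window $[s-\delta_0,s]$, so that letting $\delta_0\goto0$ collapses every $t$--dependent factor onto its value at $t=s$. Since $\eps$ and $\delta$ are kept fixed throughout this passage, the constants $\norm{\beta_\eps''}_\infty\le M_2/\eps$ and $\norm{\beta_\eps'''}_\infty\le C/\eps^2$ coming from \eqref{eq:approx to abosx} are harmless, and the whole matter reduces to the time--averaged $L^2$ continuity of the entropy solution. Note that $I_7$ is genuinely simpler than $I_6$: in $I_7$ the factor $\sigma^2(y,v(s,y))$ already lives at time $s$ and only $\beta''(v(s,y)-u(t,x))$ must be moved from $t$ to $s$, whereas $I_6$ carries a double difference. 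I therefore concentrate on $I_6$.

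For $I_6$ I would first use $\int_0^T\rho_{\delta_0}(t-s)\,dt=1$ for $s\in[\delta_0,T]$ (the residual $s\in[0,\delta_0)$ contributes $\mathcal{O}(\delta_0)$ by the uniform moment bound) to rewrite the difference between $I_6$ and the claimed limit \eqref{eq:I6-delta}, up to $\mathcal{O}(\delta_0)$, as
\begin{align*}
\frac12 E\int \Big[&\sigma^2(x,u(t,x))\beta''(u(t,x)-v(s,y))\\
&-\sigma^2(x,u(s,x))\beta''(u(s,x)-v(s,y))\Big]\rho_{\delta_0}(t-s)\varrho_\delta(x-y)\psi(s,y)\,dx\,dt\,dy\,ds .
\end{align*}
Adding and subtracting $\sigma^2(x,u(t,x))\beta''(u(s,x)-v(s,y))$ splits the bracket into two contributions. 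In the first, the Lipschitz bound $\abs{\beta''(u(t,x)-v)-\beta''(u(s,x)-v)}\le (C/\eps^2)\abs{u(t,x)-u(s,x)}$ is paired with $\sigma^2(x,u(t,x))\le g(x)^2(1+\abs{u(t,x)})^2$ from \ref{A3}; in the second, $\beta''$ is bounded by $M_2/\eps$ while, by \ref{A2}--\ref{A3}, $\abs{\sigma^2(x,u(t,x))-\sigma^2(x,u(s,x))}\le C\,g(x)\abs{u(t,x)-u(s,x)}\big(2+\abs{u(t,x)}+\abs{u(s,x)}\big)$.

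In both contributions I would apply the Cauchy--Schwarz inequality in $(t,x)$ against the nonnegative weight $\rho_{\delta_0}(t-s)\,dt$ to factor out $\big(E\int\abs{u(t,x)-u(s,x)}^2\rho_{\delta_0}(t-s)\,dx\,dt\,ds\big)^{1/2}$; the remaining factor is bounded uniformly in $\delta_0$ using $g\in L^2\cap L^\infty$ (hence $g\in L^4$) together with the moment bounds $\sup_{0\le t\le T}E\norm{u(t)}_p^p<\infty$ for $p=2,4$ from part~(1) of Definition~\ref{defi:stochentropsol}. After the change of variable $t=s+\delta_0 r$, the first factor becomes $\big(E\int_0^1\int_0^T\int_{\rd}\abs{u(t+\delta_0 r,x)-u(t,x)}^2\rho(-r)\,dx\,dt\,dr\big)^{1/2}$, which tends to $0$ as $\delta_0\goto0$ exactly as in Lemma~\ref{stochastic_lemma_3}: the integrand vanishes almost surely by continuity of translation in $L^2((0,T)\times\rd)$, and the moment bound supplies the domination needed for the bounded convergence theorem. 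This yields \eqref{eq:I6-delta}, and the identical scheme with a single $\beta''$ difference gives \eqref{eq:I7-delta} for $I_7$. The only delicate point is ensuring that the polynomially growing factor $\sigma^2\sim g^2(1+\abs{u})^2$ remains square--integrable against the available moments after Cauchy--Schwarz; this is precisely where $g\in L^2\cap L^\infty$ and the finiteness of $E\norm{u(t)}_4^4$ enter, while the $\eps$--dependence of $\beta''$ and $\beta'''$ plays no role because $\eps$ is frozen during the $\delta_0\goto0$ limit.
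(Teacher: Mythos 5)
Your proof is correct, and it rests on the same three pillars as the paper's own argument: the growth bound \ref{A3} with $g\in L^2\cap L^\infty$, Cauchy--Schwarz against the weight $\rho_{\delta_0}(t-s)$, and the almost-sure $L^2$ continuity of time translations combined with the uniform moment bounds and dominated convergence. The decomposition, however, is the mirror image of the paper's. You shift the $t$-dependent factors onto time $s$, which produces two differences, $\beta''(u(t,x)-v)-\beta''(u(s,x)-v)$ and $\sigma^2(x,u(t,x))-\sigma^2(x,u(s,x))$, and therefore forces you to invoke the Lipschitz assumption \ref{A2} (through $|\sigma^2(x,a)-\sigma^2(x,b)|\le C g(x)|a-b|(2+|a|+|b|)$) together with the translation continuity of $u$. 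The paper instead leaves $\sigma^2(x,u(t,x))$ and $\beta''(u(t,x)-\cdot)$ untouched at time $t$ and moves $v(s,y)\to v(t,y)$ and $\psi(s,y)\to\psi(t,y)$: the only nontrivial difference is then bounded by $\|\beta'''\|_\infty\,|v(s,y)-v(t,y)|$ and handled by the translation continuity of $v$, while the $\psi$-difference is absorbed into $\mathcal{O}(\delta_0)$ via $|\psi(t,y)-\psi(s,y)|\le\|\partial_t\psi\|_\infty\delta_0$; assumption \ref{A2} is never used there, and the stated limit is recovered after renaming the integration variable $t$ as $s$. Your route buys a proof that lands directly on \eqref{eq:I6-delta} in exactly the form stated (no renaming and no $\psi$-difference) and correctly makes $I_7$ a genuine one-difference estimate, at the cost of one extra term and the use of \ref{A2}; the paper's route is the more economical in hypotheses. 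Your treatment of the boundary region $s\in[0,\delta_0)$ as $\mathcal{O}(\delta_0)$ via the moment bounds, and your observation that the frozen $\eps$-dependent constants $\|\beta_\eps''\|_\infty\le M_2/\eps$ and $\|\beta_\eps'''\|_\infty\le C/\eps^2$ are harmless during the $\delta_0\goto 0$ passage, match what the paper leaves implicit.
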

\begin{proof}
   We will rigorously establish \eqref{eq:I6-delta} and \eqref{eq:I7-delta}.
    Note that   
   \begin{align*}
       I_6 =&  \frac{1}{2} E \int_{\Pi_T\times \Pi_T}\sigma^2(x,u(t,x)) \beta^{\prime\prime} (u(t,x)-v(s,y))
 \psi(s,y)\rho_{\delta_0}(t-s)\varrho_{\delta}(x-y)\,dx\,dy \,ds\,dt.
   \end{align*} 
   Therefore,
   
   \begin{align*}
    & \Big| I_6 -\frac{1}{2} E \int_{\Pi_T} \int_{\R^d}\sigma^2(x,u(t,x)) \beta^{\prime\prime} (u(t,x)-v(t,y))
 \psi(t,y)\varrho_{\delta}(x-y)\,dx\,dy \,dt\Big|\\
 =  & \Big| I_6 - \frac{1}{2} E\int_{s=\delta_0}^T \int_{t=0}^T\int_{\R^d\times \R^d}\sigma^2(x,u(t,x)) \beta^{\prime\prime} (u(t,x)-v(t,y))\psi(t,y)\notag\\
 &\hspace{5cm}\times\rho_{\delta_0}(t-s)\varrho_{\delta}(x-y)\,dx\,dy \,dt\,ds \Big| + \mathcal{O}(\delta_0) \\
 \le &  E  \int_{s=\delta_0}^T \int_{t=0}^T\int_{\R^d\times \R^d}\sigma^2(x,u(t,x))\Big|\beta^{\prime\prime} (u(t,x)-v(t,y)) - \beta^{\prime\prime} (u(t,x)-v(s,y))\Big| \\
& \hspace{5cm} \times\psi(s,y)\rho_{\delta_0}(t-s)\varrho_{\delta}(x-y)\,dx\,dy \,dt\,ds\\
& + E  \int_{s=\delta_0}^T \int_{t=0}^T\int_{\R^d\times \R^d}\sigma^2(x,u(t,x)) \beta^{\prime\prime} (u(t,x)-v(t,y))|\psi(t,y)-\psi(s,y)|\notag\\
 &\hspace{6cm}\times\rho_{\delta_0}(t-s)\varrho_{\delta}(x-y)\,dx\,dy \,dt\,ds  +\mathcal{O}(\delta_0)\\
\le& ||\beta^{\prime\prime\prime}||_{\infty}E  \int_{s=\delta_0}^T \int_{t=0}^T\int_{\R^d\times \R^d}
\sigma^2(x,u(t,x)) |v(s,y)-v(t,y)|\psi(s,y)\notag\\
&\hspace{6cm} \times\rho_{\delta_0}(t-s)\varrho_{\delta}(x-y)\,dx\,dy \,dt\,ds +\mathcal{O}(\delta_0)\\
 \le& \text{Const}(\beta,\eta)E  \int_{s=\delta_0}^T \int_{t=0}^T\int_{\R^d\times \R^d}g^2(x)(1+|u(t,x)|^2) |v(s,y)-v(t,y)|\notag\\
 &\hspace{5cm}\times \psi(s,y)\rho_{\delta_0}(t-s)\varrho_{\delta}(x-y)\,dx\,dy \,dt\,ds +\mathcal{O}(\delta_0)\\
&(\text{By Cauchy-Schwartz})\\
\le&  \text{Const}(\beta,\eta)\sqrt{E  \int_{s=\delta_0}^T \int_{t=0}^T\int_{\R^d\times \R^d}g^4(x)(1+|u(t,x)|^4)\psi(s,y)
\rho_{\delta_0}(t-s)\varrho_{\delta}(x-y) \,dx\,dy \,dt\,ds}\\
 &\hspace{1cm}\times \sqrt{E  \int_{s=\delta_0}^T \int_{t=0}^T\int_{\R^d\times \R^d}|v(s,y)-v(t,y)|^2 \psi(s,y)
 \rho_{\delta_0}(t-s)\varrho_{\delta}(x-y)\,dx\,dy \,dt\,ds}+\mathcal{O}(\delta_0)\\
 \le &  \text{Const}(\beta,\eta,\psi)  \sqrt{E  \int_{s=\delta_0}^T \int_{t=0}^T\int_{\R^d}|v(s,y)-v(t,y)|^2
 \rho_{\delta_0}(t-s)\,dy \,dt\,ds}~~ +\mathcal{O}(\delta_0)\\
 \le & \text{Const}(\beta,\eta,\psi)  \sqrt{E  \int_{r=0}^1 \int_{t=0}^T\int_{\R^d}|v(t,x)-v(t+r\delta_0,x)|^2
 \rho(-r)\,dx \,dt\,dr}~~ +\mathcal{O}(\delta_0).
   \end{align*} Once again we use the fact that  $\lim_{\delta_0\downarrow 0 }  \int_0^T\int_{\rd}|u(t+\delta_0 r,x)-u(t,x)|^2\,
\,dx\,dt\rightarrow 0$. Therefore, by dominated convergence theorem,
$E  \int_{r=0}^1 \int_{t=0}^T\int_{\R^d}|v(t,y)-v(t+r\delta_0,y)|^2 \rho(-r)\,dy \,dt\,dr \rightarrow 0$ as $\delta_0\rightarrow 0$. The proof of \eqref{eq:I7-delta} is similar.

\end{proof}
\begin{lem}\label{stochastic_lemma_6} It holds that
 \begin{align}
  \lim_{\delta_0 \goto 0} I_8 =- E\int_{\Pi_T}\int_{\R^d} \sigma(x, u(t,x))\sigma(y,v(t,y))
  \beta^{\prime\prime} (u(t,x)-v(t,y)\psi(t,y)\varrho_{\delta}(x-y) \,dy\,dx\,dt.
 \end{align}
\end{lem}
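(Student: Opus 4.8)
The plan is to exploit the special structure of $I_8$: in contrast with $I_6$ and $I_7$, here \emph{both} $u$ and $v$ are evaluated at the \emph{same} time $t$, and the only dependence on $s$ sits in the product $\psi(s,y)\rho_{\delta_0}(t-s)$. Hence the $s$-integration is purely deterministic and can be performed first, decoupled from the stochastic processes; in particular no $L^2$-in-time continuity of $v$ (the workhorse of Lemma \ref{stochastic_lemma_5}) is needed. Denote by $\widehat I_8$ the right-hand side of the lemma.

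First I would carry out the $s$-integration and write
\begin{align*}
I_8 = -E\int_0^T\int_{\R^d}\int_{\R^d} \sigma(x,u(t,x))\sigma(y,v(t,y))\beta^{\prime\prime}(u(t,x)-v(t,y))\,\Psi_{\delta_0}(t,y)\,\varrho_\delta(x-y)\,dy\,dx\,dt,
\end{align*}
where $\Psi_{\delta_0}(t,y):=\int_0^T\psi(s,y)\rho_{\delta_0}(t-s)\,ds$ is the time-mollification of $\psi(\cdot,y)$. Since $\supp(\rho_{\delta_0})\subset[-\delta_0,0]$ yields $\int_0^T\rho_{\delta_0}(t-s)\,ds=1$ for $0\le t\le T-\delta_0$, for such $t$ we have
\begin{align*}
\Psi_{\delta_0}(t,y)-\psi(t,y)=\int_t^{t+\delta_0}\big[\psi(s,y)-\psi(t,y)\big]\rho_{\delta_0}(t-s)\,ds,
\end{align*}
whence $|\Psi_{\delta_0}(t,y)-\psi(t,y)|\le \|\partial_s\psi\|_\infty\,\delta_0$. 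On the slab $t\in(T-\delta_0,T]$ the mollified mass is only $\le 1$, but since this $t$-set has measure $\delta_0$ its contribution is $\mathcal{O}(\delta_0)$, exactly as in the earlier lemmas.

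Substituting this bound into $I_8-\widehat I_8$ gives
\begin{align*}
\big|I_8-\widehat I_8\big|\le \|\partial_s\psi\|_\infty\,\delta_0\,\mathcal{J}(\eps,\delta)+\mathcal{O}(\delta_0),
\end{align*}
where $\mathcal{J}(\eps,\delta):=E\int_0^T\int_{\R^d}\int_{\R^d}|\sigma(x,u(t,x))|\,|\sigma(y,v(t,y))|\,|\beta^{\prime\prime}(u(t,x)-v(t,y))|\,\varrho_\delta(x-y)\,dy\,dx\,dt$. It remains to verify that $\mathcal{J}(\eps,\delta)$ is finite for fixed $\eps,\delta$. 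Using $|\beta^{\prime\prime}_\eps|\le M_2/\eps$ from \eqref{eq:approx to abosx} and Assumption \ref{A3}, set $a(x)=g(x)(1+|u(t,x)|)$ and $b(y)=g(y)(1+|v(t,y)|)$; then $\int_{\R^d}\int_{\R^d} a(x)b(y)\varrho_\delta(x-y)\,dx\,dy\le\|a\|_{L^2}\|b\|_{L^2}$ by Young's and Cauchy--Schwarz inequalities together with $\|\varrho_\delta\|_{L^1}=1$. Bounding $\|a\|_{L^2}\|b\|_{L^2}\le\tfrac12(\|a\|_{L^2}^2+\|b\|_{L^2}^2)$ and using $g\in L^\infty\cap L^2$ gives $\|a\|_{L^2}^2\le C(1+\|u(t)\|_2^2)$ (and likewise for $b$); taking expectations and invoking $\sup_{0\le t\le T}E\|u(t)\|_2^2<\infty$ from Definition \ref{defi:stochentropsol}(1) (and the analogue for $v$) shows $\mathcal{J}(\eps,\delta)<\infty$. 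Therefore $|I_8-\widehat I_8|=\mathcal{O}(\delta_0)\to 0$ as $\delta_0\downarrow 0$, proving the lemma. The only bookkeeping is the finiteness of $\mathcal{J}(\eps,\delta)$; there is no genuine analytic obstacle here---unlike for $I_6,I_7$---precisely because the common time argument $t$ obviates any control of the time increments of $v$.
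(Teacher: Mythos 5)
Your proposal is correct and takes essentially the same approach as the paper: integrating out $s$ first to form the mollified test function $\Psi_{\delta_0}$ is just a Fubini reshuffle of the paper's estimate, which likewise exploits that $u$ and $v$ share the common time argument $t$, replaces $\psi(s,y)$ by $\psi(t,y)$ at a cost $\delta_0\,\|\partial_t\psi\|_\infty$ plus an $\mathcal{O}(\delta_0)$ edge term, bounds $\beta^{\prime\prime}$ in sup-norm for fixed $\eps$, and controls the remaining $\sigma$-product integral via \ref{A3} and the bounds $\sup_{0\le t\le T}E\|u(t)\|_2^2,\ \sup_{0\le t\le T}E\|v(t)\|_2^2<\infty$. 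Your observation that, unlike for $I_6$ and $I_7$, no control of the time increments of $v$ is needed here is precisely why the paper's proof of this lemma is correspondingly short.
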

\begin{proof}
Recall that
\begin{align*}
 I_8 =- E\int_{\Pi_T}\int_{\Pi_T} \sigma(x,u(t,x))\sigma(y,v(t,y)) \beta^{\prime\prime} (u(t,x)-v(t,y))
 \psi(s,y)\rho_{\delta_0}(t-s)\varrho_{\delta}(x-y) \,dy\,dx\,dt\,ds.
 \end{align*}

Therefore, as before,
  \begin{align*}
   &\Big| I_8+E\int_{\Pi_T}\int_{\R^d} \sigma(x, u(t,x))\sigma(y,v(t,y))
   \beta^{\prime\prime} (u(t,x)-v(t,y))
   \psi(t,y)\varrho_{\delta}(x-y) \,dy\,dx\,dt\Big|\\
   \le & E\int_{s=\delta_0}^T\int_{\Pi_T}\int_{\R^d} |\sigma(x, u(t,x))\sigma(y,v(t,y))| \beta^{\prime\prime}
   (u(t,x)-v(t,y))|\psi(s,y)-\psi(t,y)| \\
   &\hspace{4cm} \times\rho_{\delta_0}(t-s)\varrho_{\delta}(x-y) \,dy\,dx\,dt\,ds + \mathcal{O}(\delta_0)\\
   \le & \delta_0||\partial_t \psi||_\infty ||\beta^{\prime\prime}||_\infty  E\int_{\Pi_T}\int_{\R^d}
   |\sigma(x, u(t,x))\sigma(y,v(t,y))| \varrho_{\delta}(x-y) \,dy\,dx\,dt+ \mathcal{O}(\delta_0)\\
   \le &\delta_0||\partial_t \psi||_\infty ||\beta^{\prime\prime}||_\infty\,  C\Big(1+\sup_{0\le t\le T} E||u(t,\cdot )||^2_2+\sup_{0\le t\le T} E||v(t,\cdot)||^2_2\Big)+ \mathcal{O}(\delta_0).
   \end{align*} Hence the lemma follows by simply letting $\delta_0\downarrow 0$ in the last line.
\end{proof}
\begin{lem} \label{stochastic_lemma_7}
 Assume 
  that $ \eps\goto 0^+$, $ \delta\goto 0^+$ and $\eps^{-1}\delta^{2}\goto0^+ $, then
\begin{align}
& \limsup_{\eps\goto 0^+,~\delta\goto 0^+,~\eps^{-1}\delta^2\goto 0^+ }\lim_{\delta_0 \goto 0}\Big( I_6 +I_7 +I_8 \Big) = 0\notag
\end{align}
\end{lem}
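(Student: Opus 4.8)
The plan is to first let $\delta_0\goto 0$ and then extract the sign of the limiting expression before estimating its size. By Lemmas \ref{stochastic_lemma_5} and \ref{stochastic_lemma_6}, the three terms $I_6,I_7,I_8$ have explicit $\delta_0\goto 0$ limits, and adding them (after relabelling the time variable and using that $\beta=\beta_\eps$ is even, so $\beta''(u-v)=\beta''(v-u)$) the three integrands combine into a single perfect square:
\[
\lim_{\delta_0\goto 0}(I_6+I_7+I_8)=\frac12 E\int_{\Pi_T}\int_{\rd}\big[\sigma(x,u(s,x))-\sigma(y,v(s,y))\big]^2\beta''(u(s,x)-v(s,y))\,\psi(s,y)\varrho_{\delta}(x-y)\,dx\,dy\,ds.
\]
Since $\beta''\ge 0$, $\psi\ge0$ and $\varrho_{\delta}\ge0$, this quantity is nonnegative; hence it suffices to produce a matching upper bound that vanishes under the prescribed scaling.

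For the upper bound I would invoke assumption \ref{A2} to write $[\sigma(x,u(s,x))-\sigma(y,v(s,y))]^2\le 2C^2(|u(s,x)-v(s,y)|^2+|x-y|^2)$ and split the integral accordingly. On the support of $\beta''(u-v)$ one has $|u(s,x)-v(s,y)|\le\eps$ and $|\beta''|\le M_2/\eps$ by \eqref{eq:approx to abosx}; thus the first piece satisfies $|u-v|^2\beta''(u-v)\le M_2\eps$, while on the support of $\varrho_{\delta}$ one has $|x-y|\le\delta$, so the second piece satisfies $|x-y|^2\beta''(u-v)\le M_2\delta^2/\eps$. Because $\int_{\rd}\varrho_{\delta}(x-y)\,dx=1$ and $\psi$ is compactly supported, integrating these deterministic bounds against $\psi\varrho_{\delta}$ over $\Pi_T\times\rd$ leaves only a constant $\mathrm{Const}(\psi,T)$, so that
\[
0\le \lim_{\delta_0\goto 0}(I_6+I_7+I_8)\le \mathrm{Const}(\psi,T)\,C^2 M_2\Big(\eps+\frac{\delta^2}{\eps}\Big).
\]

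Taking the $\limsup$ as $\eps\goto 0^+$, $\delta\goto 0^+$ and $\eps^{-1}\delta^2\goto 0^+$ then squeezes the left-hand side to $0$, which is the assertion. The only delicate point is the bookkeeping of the two independent small scales: the entropy-regularization scale $\eps$ enters through the singular factor $1/\eps$ in $\beta''$, and the spatial discrepancy $|x-y|$ coming from the $x$-dependence of $\sigma$ in \ref{A2} produces exactly the term $\delta^2/\eps$. Recognizing that this cross-scale term is harmless \emph{precisely} when $\eps^{-1}\delta^2\goto0$ is the heart of the lemma and the reason for the hypothesis; the remaining $\mathcal{O}(\delta_0)$ errors have already been absorbed in Lemmas \ref{stochastic_lemma_5}--\ref{stochastic_lemma_6}, so no further care is needed there.
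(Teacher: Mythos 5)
Your proposal is correct and follows essentially the same route as the paper: combine the $\delta_0\goto 0$ limits of $I_6,I_7,I_8$ (via Lemmas \ref{stochastic_lemma_5} and \ref{stochastic_lemma_6} and the evenness of $\beta''$) into the single perfect square, then apply \ref{A2} together with the support and size bounds \eqref{eq:approx to abosx} on $\beta_\eps''$ to obtain the estimate $C(\eps+\eps^{-1}\delta^2)$, which vanishes under the prescribed scaling. Your explicit remark that the limiting expression is nonnegative (so the bound forces the limit to be exactly $0$) is a harmless clarification that the paper leaves implicit.
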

\begin{proof}
 Since $ \beta^{\prime\prime} $ is even function,  we have 
from  Lemma \ref{stochastic_lemma_5} and Lemma \ref{stochastic_lemma_6} that
\begin{align}
&\lim_{\delta_0 \goto 0}\Big( I_6 +I_7 +I_8 \Big)\notag \\
&=\frac{1}{2}E\Big[\int_{\Pi_T}\int_{\R^d} \Big(\sigma(x,u(s,x))-\sigma(y,v(s,y))\Big)^2 \beta^{\prime\prime}(u(s,x)-v(s,y))
\psi(s,y)\varrho_\delta(x-y)\,dx\,dy\,ds\Big]\notag 
\end{align}
Now, by our assumption on $\sigma$, we have
\begin{align*}
  &\Big(\sigma(x,u(s,x))-\sigma(y,v(s,y))\Big)^2 \beta^{\prime\prime}(u(s,x)-v(s,y))\\
  & \le C\Big( |x-y|^2 + |u(s,x)-v(s,y)|^2 \Big)  \beta^{\prime\prime}(u(s,x)-v(s,y))\\
  & \le C\Big( \eps + \frac{|x-y|^2}{\eps}\Big)
  \end{align*} Therefore,
\begin{align*}
 &E \Big[\int_{(0,T]}\int_{x,y}\Big(\sigma(x,u(s,x))-\sigma(y,v(s,y))\Big)^2 \beta^{\prime\prime}(u(s,x)-v(s,y))
 \psi(s,y)\,\varrho_{\delta}(x-y)\,dx\,dy\,ds\Big]\notag \\
 \leq& C_1\Big(\eps + \eps^{-1}\delta^2\Big)T,
\end{align*} and letting $ \eps\goto 0^+ $, $ \delta\goto 0^+ $ and $\eps^{-1}\delta^{2}\goto0^+ $ 
gets us to the desired conclusion.  
\end{proof}


\begin{thm}\label{thm:L1-contraction} Assume \ref{A1}-\ref{A3}.
Suppose $u$ is a stochastic entropy solution of \eqref{eq:brown_stochconservation_laws} and $v$ is a stochastic strong entropy solution of the same equation.  Then\\
 \[E[||(u(t)-v(t))||_1]\leq E[||(u(0)-v(0))||_1 ].\] for almost every $t > 0$.
\end{thm}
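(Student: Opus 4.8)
The plan is to substitute the accumulated limits of Lemmas \ref{stochastic_lemma_1}--\ref{stochastic_lemma_7} back into the master inequality \eqref{stochas_entropy_3}, thereby distilling a single weak Kato inequality, and then to read off the $L^1$-contraction by a judicious choice of test function. First I would fix a nonnegative $\psi\in C_c^{1,2}([0,\infty)\times\rd)$ and pass to the limit in \eqref{stochas_entropy_3} in the order prescribed by the lemmas: send $\delta_0\downarrow 0$ first, then $(\eps,\delta)\downarrow(0,0)$ along the regime $\eps/\delta\downarrow 0$, $\eps^{-1}\delta^2\downarrow 0$ required by Lemmas \ref{stochastic_lemma_4} and \ref{stochastic_lemma_7}. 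In this limit $A(\delta,\delta_0)\to 0$, the groupings $I_4+I_5$ and $I_6+I_7+I_8$ vanish, while $I_1,I_2,I_3$ converge to the diagonal quantities of Lemmas \ref{stochastic_lemma_1}--\ref{stochastic_lemma_3}.

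Carrying this out leaves, for every admissible $\psi\ge 0$, the inequality
\begin{align*}
&E\int_{\rd}|u_0(x)-v_0(x)|\,\psi(0,x)\,dx + E\int_{\Pi_T}|u(s,y)-v(s,y)|\,\partial_s\psi(s,y)\,dy\,ds\\
&\qquad + E\int_{\Pi_T}\sum_{k=1}^d \sgn(u-v)\big(F_k(u)-F_k(v)\big)\,\partial_{y_k}\psi(s,y)\,dy\,ds \;\ge\; 0,
\end{align*}
the stochastic analogue (in expectation, weak in both variables) of Kruzkov's Kato inequality.

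To extract the contraction I would specialize $\psi(s,y)=\theta_h(s)\,\phi_R(y)$, with $\phi_R(y)=\phi(y/R)$ for a fixed $\phi\in C_c^\infty(\rd)$, $0\le\phi\le 1$, $\phi\equiv 1$ on the unit ball, and $\theta_h$ a smooth nonincreasing regularization of $\mathbf 1_{[0,t_0]}$ satisfying $\theta_h(0)=1$ and $\theta_h'\approx -h^{-1}\mathbf 1_{[t_0,t_0+h]}$. The flux term then carries $\grad_y\psi=\theta_h\,\grad\phi_R$ with $|\grad\phi_R|\le C/R$ supported in $\{R\le|y|\le 2R\}$; since $|\sgn(u-v)(F(u)-F(v))|\le C|u-v|(1+|u|^p+|v|^p)$ and assumption \ref{A1} together with the moment bounds of Definition \ref{defi:stochentropsol}(1) render $E\int_0^{t_0}\int_{\rd}|u-v|(1+|u|^p+|v|^p)\,dy\,ds$ finite (via H\"older), this term is $O(1/R)$ and vanishes as $R\to\infty$. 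Sending $h\downarrow 0$ converts the $\partial_s\psi$-term into $-E\int_{\rd}|u(t_0)-v(t_0)|\phi_R\,dy$ at every Lebesgue point $t_0$ of $s\mapsto E\int_{\rd}|u(s)-v(s)|\phi_R\,dy$, the initial term remaining $E\int_{\rd}|u_0-v_0|\phi_R\,dy$; monotone convergence as $R\to\infty$ then gives $E\norm{u(t_0)-v(t_0)}_1\le E\norm{u_0-v_0}_1$ for a.e. $t_0>0$.

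The principal obstacle, and the point where the weak-in-time standpoint is unavoidable, is the passage $h\downarrow 0$: lacking any a priori path continuity, the value $u(t_0)$ is recoverable only at Lebesgue points of the integrable map $t\mapsto E\int_{\rd}|u(t)-v(t)|\phi_R\,dy$, which is precisely why the conclusion is confined to almost every $t$. The secondary difficulty is making the flux-term decay uniform under the stochastic average, handled by combining the polynomial growth \ref{A1} with $\sup_tE\norm{u(t)}_p^p<\infty$ through H\"older's inequality; the remaining chores---smoothing $\theta_h$ to remain within $C_c^{1,2}$ and interchanging the $h$- and $R$-limits---are routine given Lemmas \ref{stochastic_lemma_1}--\ref{stochastic_lemma_7}.
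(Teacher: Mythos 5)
Your proposal is correct and follows essentially the same route as the paper: the identical order of limits ($\delta_0\downarrow 0$ first, then $\eps,\delta\downarrow 0$ in the regime of Lemmas \ref{stochastic_lemma_4} and \ref{stochastic_lemma_7}) yielding the Kato-type inequality \eqref{stochastic_estimate_5}, followed by the same test-function specialization $\psi(s,y)=\theta_h(s)\phi_R(y)$ (the paper's $\psi_h\phi_n$, with your smooth cutoffs merely replacing the paper's piecewise-linear ones plus its mollification remark), the $O(1/R)$ flux-tail estimate from \ref{A1} and the moment bounds, and the right-Lebesgue-point argument in $h\downarrow 0$ that confines the conclusion to almost every $t$. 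The one point you label routine---interchanging the $h$- and $R$-limits---is handled in the paper by observing that a right Lebesgue point of $A(s)=E\big[\int_{\R^d}|u(s,x)-v(s,x)|\,dx\big]$ is automatically a right Lebesgue point of every truncated $A_n$, so the exceptional null set is independent of the cutoff, exactly as your argument requires.
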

\begin{proof} First we pass to the limit $ \delta_0\downarrow 0 $ in \eqref{stochas_entropy_3} and then let $\delta= \eps^{\frac 23}$
 and finally let $\eps \downarrow 0$. We use the Lemmas \ref{stochastic_lemma_1}- \ref{stochastic_lemma_7}
along with the preceding  inequality \eqref{stochas_entropy_3} and obtain
\begin{align}
 &E\Big[\int_{\R^d} |u_0(x)-v_0(x)|\psi(0,x)\,dx\Big] + E\Big[\int_{\Pi_T} |v(t,x)-u(t,x)|\partial_t\psi(t,x)
 \,dt\,dx\Big]\notag \\
 &  + E\Big[\int_{\Pi_T} F(u(t,x),v(t,x)).\grad_x \psi(t,x)\,dt\,dx\Big] \ge 0\label{stochastic_estimate_5}
\end{align}

For each $ n\in \N $, define
 \begin{align*}
\phi_n(x)=\begin{cases} 1,\quad \text{if} ~ |x|\le n\\
                       2(1-\frac{|x|}{2n}), \quad \text{if} ~n < |x|\le 2n\\
                       0,\quad \text{if}~|x|> 2n.
           \end{cases}
\end{align*}
For each $h>0 $ and fixed $t\geq 0$, define 
 \begin{align*}
\psi_h(s)=\begin{cases} 1,\quad \text{if} ~ s\le t\\
                       1-\frac{s-t}{h}, \quad \text{if} ~t \le s\le t+h\\
                       0,\quad \text{if}~s> t+h.
           \end{cases}
\end{align*}
By standard approximation, truncation and mollification argument, \eqref{stochastic_estimate_5} holds with $\psi(s,x)=\phi_n(x)
\psi_h(s)$. Define
\[ A(s)= E\Big[\int_{\R^d} |u(s,x)-v(s,x)|\,dx\Big],\]
then $ A\in L_{\text{loc}}^1([0,\infty))$. It is trivial to check that any right Lebesgue point of $A(t)$ is also a  right Lebesgue point of 
$$A_n(s)= E\Big[\int_{\R^d} \phi_n(x)|u(s,x)-v(s,x)|\,dx\Big]$$ for all $n$.
Let $t$ be a right Lebesgue point of $A$.  We choose this $t$ in the definition  of $\psi_h(s)$.
Thus, from \eqref{stochastic_estimate_5} we have
 
 \begin{align}
   &\frac{1}{h}\int_{t}^{t+h} E\Big[\int_{\R^d} |v(s,x)-u(s,x)|\phi_n(x)\,dx\Big]\,ds \notag \\
 & \le  E\Big[\int_{\Pi_{T}}  F(u(s,x),v(s,x)).\grad_x \phi_n(x)\,\psi_h(s)\,ds\,dx\Big] \notag \\
 & \hspace{2cm}+ E\Big[\int_{\R^d} |u_0(x)-v_0(x)|\phi_n(x)\,dx\Big].\notag 
 \end{align}
  Taking limit as $h\goto 0$, we obtain
   \begin{align}
   & E\Big[\int_{\R^d} |v(t,x)-u(t,x)|\phi_n(x)\,dx\Big] \notag \\
 & \le  E\Big[\int_{\R^d}\int_0^t  F(u(s,x),v(s,x)).\grad_x \phi_n(x)\,ds\,dx\Big] \notag \\
 & \hspace{2cm}+ E\Big[\int_{\R^d} |u_0(x)-v_0(x)|\phi_n(x)\,dx\Big]\notag \\
 &\le C(T) \frac{1}{n}\Big[1+ \sup_{0\le s \le T}E||u(s)||_p^p +  \sup_{0\le s \le T}E||v(s)||_p^p \Big]\notag \\
 & \hspace{2cm}+ E\Big[\int_{\R^d} |u_0(x)-v_0(x)|\phi_n(x)\,dx\Big] \label{stochastic_estimate_final}
 \end{align}
Letting $n\goto \infty$, we have from \eqref{stochastic_estimate_final}
 \[E[||(u(t)-v(t))||_1]\leq E[||(u(0)-v(0))||_1 ].\]
\end{proof}


\begin{thm}[comparison principle]\label{comparison principle}  Assume \ref{A1}-\ref{A3}. 
Suppose $u$ is a stochastic entropy solution of \eqref{eq:brown_stochconservation_laws} and $v$ is a stochastic strong entropy
solution.  Then for almost every $t>0$,
 \[E[||(u(t)-v(t))_+||_1]\leq E[||(u(0)-v(0))_+||_1 ].\]
Consequently, if  $v(0,x)\leq u(0,x)$ a.e in $x$ holds almost surely, and that  $E \Big[||(u(0,.)-v(0,.))_+||_1\Big] < \infty $, then almost surely
$ v(t,x)\leq u(t,x) $ a.e. in $ x$, and almost every $t>0$. 
\end{thm}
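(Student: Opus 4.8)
The plan is to repeat, almost verbatim, the doubling-of-variables argument of Theorem~\ref{thm:L1-contraction}, replacing the convex approximation of $|r|$ by a convex approximation of the positive part $r_{+}=\max(r,0)$. The whole chain \eqref{stochas_entropy_1}--\eqref{stochas_entropy_3}, together with Lemmas~\ref{stochastic_lemma_1}--\ref{stochastic_lemma_7}, only uses that $\beta(\cdot-k)$ is a nonnegative convex $C^{2}$ entropy and, for the strong entropy condition, that $\beta$ is smooth with $\beta''$, $\beta'''$ compactly supported; so feeding in a one-sided approximant should yield a one-sided analog of \eqref{stochastic_estimate_5}. The only genuinely new point is the bookkeeping of signs: in the symmetric proof $\beta\approx|\cdot|$ is even, so it is irrelevant whether the inequality for $v$ is written with $\beta(v-u)$ or $\beta(u-v)$; for a one-sided entropy this choice is decisive and must be made so that the two inequalities are compatible.

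Concretely, I would fix a nonnegative convex $\beta_{\eps}\in C^{\infty}(\R)$ with $\beta_{\eps}''\ge 0$ supported in $[0,\eps]$, $\beta_{\eps}'\in[0,1]$, satisfying $|\beta_{\eps}(r)-r_{+}|\le M_{1}\eps$ and $|\beta_{\eps}''(r)|\le \tfrac{M_{2}}{\eps}\,\mathbf{1}_{\{0\le r\le\eps\}}$, exactly parallel to \eqref{eq:approx to abosx}. Since $\beta_{\eps}''$ and $\beta_{\eps}'''$ are compactly supported, $\beta_{\eps}$ is admissible both in Definition~\ref{defi:stochentropsol} and in the strong entropy condition of Definition~\ref{strong-entropy-condition}. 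One checks, exactly as in the computation preceding Lemma~\ref{stochastic_lemma_3}, that now $F_{k}^{\beta_{\eps}}(a,b)\to \sgn^{+}(a-b)(F_{k}(a)-F_{k}(b))$ with $\sgn^{+}=\mathbf{1}_{(0,\infty)}$, in place of the full $\sgn$.

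Next I would run the doubling as in \eqref{stochas_entropy_1}--\eqref{stochas_entropy_3}: for $u$ I use the convex entropy $r\mapsto\beta_{\eps}(r-k)$ with $k=v(s,y)$, and---this is the key change---for $v$ I use the convex entropy $r\mapsto\beta_{\eps}(k-r)$ with $k=u(t,x)$ (which is convex because $\beta_{\eps}$ is). With this matched pairing, both It\^o correction terms produce the same factor $\beta_{\eps}''(u-v)$, so that $I_{6}+I_{7}+I_{8}$ collapses to $\tfrac12 E\!\int(\sigma(x,u)-\sigma(y,v))^{2}\beta_{\eps}''(u-v)\,\phi_{\delta,\delta_0}$ with no appeal to evenness. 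This is the step I expect to be the main obstacle, and its resolution is precisely this choice of entropy for $v$; the naive choice $\beta_{\eps}(v-u)$ would track $(v-u)_{+}$ on one side and $(u-v)_{+}$ on the other and fail to close. Because $\beta_{\eps}''(u-v)$ is supported on $\{0\le u-v\le\eps\}\subset\{|u-v|\le\eps\}$, the estimate of Lemma~\ref{stochastic_lemma_7} applies verbatim and gives a bound $C(\eps+\eps^{-1}\delta^{2})\goto 0$. The remaining Lemmas~\ref{stochastic_lemma_1}--\ref{stochastic_lemma_6} go through with $|\cdot|$ replaced by $(\cdot)_{+}$ and $\sgn$ by $\sgn^{+}$, the cross term $I_{8}$ being supplied as before by applying Definition~\ref{strong-entropy-condition} to $v$ with $\tilde u=u$.

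Passing to the limits $\delta_{0}\downarrow 0$, then $\delta=\eps^{2/3}$, then $\eps\downarrow 0$ in \eqref{stochas_entropy_3} then delivers the one-sided analog of \eqref{stochastic_estimate_5}, namely
\[ E\!\int_{\rd}(u_{0}-v_{0})_{+}\,\psi(0,x)\dx + E\!\int_{\Pi_T}(u-v)_{+}\,\partial_{t}\psi\dt\dx + E\!\int_{\Pi_T}\sgn^{+}(u-v)\big(F(u)-F(v)\big)\cdot\grad_{x}\psi\dt\dx\ge 0. \]
From here the cut-off functions $\phi_{n}$, $\psi_{h}$ and the right-Lebesgue-point argument of Theorem~\ref{thm:L1-contraction} carry over unchanged and yield $E[\,\|(u(t)-v(t))_{+}\|_{1}\,]\le E[\,\|(u(0)-v(0))_{+}\|_{1}\,]$ for a.e. $t>0$. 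The asserted ordering is then immediate: when the initial positive part is almost surely null, the right-hand side vanishes and the inequality forces $(u(t)-v(t))_{+}=0$ for a.e. $(t,x)$, almost surely, which is the claimed comparison.
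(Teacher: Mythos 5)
Your proposal is correct and follows essentially the same route as the paper, whose entire proof of Theorem~\ref{comparison principle} is the remark that the argument of Theorem~\ref{thm:L1-contraction} goes through once $(\beta_\epsilon)_\epsilon$ is chosen as a smooth convex approximation of $r_+=\max(0,r)$. Your explicit sign bookkeeping --- using $r\mapsto\beta_\eps(u(t,x)-r)$ as the entropy for $v$ so that both It\^o corrections, the cross term, and the $\partial_t\rho_{\delta_0}+\partial_s\rho_{\delta_0}$ cancellation all carry the common factor $\beta_\eps(u-v)$ or $\beta_\eps''(u-v)$ --- is exactly the detail the paper leaves implicit, and it is indeed indispensable since a one-sided $\beta_\eps''$ is not even and Lemma~\ref{stochastic_lemma_7} would otherwise not close.
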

\begin{proof}
  The proof follows exactly same as that of  Theorem \ref{thm:L1-contraction}, if we choose $(\beta_\epsilon(r))_{\epsilon}$ to be a smooth convex approximation of $r_+=\max(0,r)$.
\end{proof}

\begin{proof}[Proof of Theorem \ref{thm:uniqueness}]

It is given that   $u$ is a stochastic entropy solution of \eqref{eq:brown_stochconservation_laws} and $v$ is a stochastic strong entropy solution and 
$\cap_{p=1,2,..} L^p(\rd)$-valued random variable  $ u_0$ satisfies
\[E \Big[||u_0||_{p}^{p} + ||u_0||_{2}^{p}\Big] < \infty , \qquad p=1,2,... ~.\]
 Therefore by  Theorem \ref{thm:L1-contraction},  as $ u(0) = v(0) $ almost surely,  we have $ u(t)=v(t) $ for almost every $t$. Hence the uniqueness is proved.

\end{proof}

\section{Vanishing viscosity and existence of entropy solutions}
In this section, we detail the mechanism of proving existence of entropic solution.  Just as the deterministic problem, here also we apply vanishing viscosity method. We must mention that a number of recent studies, including Feng and Nualart \cite{nualart:2008}, use this approach to establish existence for conservation laws driven by noise. However, this method requires rigorous wellposedness results along with a few crucial a priori estimates for the viscous problem which allows one to apply stochastic compensated compactness and get the existence. It is to be mentioned also that we need to exercise outmost caution while extracting an inviscid limit out of the vanishing viscosity approximations. The apparent inconsistencies, which are the motivations for this paper, in \cite{nualart:2008}  are largely due to the inadequacies in handling the limiting procedure. 
     
      It must be admitted here that, in \cite{nualart:2008}, the authors offer a rigorous and flawless study of the wellposedness question of viscous problem along with necessary a priori estimates.  In the first part of this section we state the relevant results without proof.
\subsection{Viscous approximation}
 Let $J\in C_c^\infty(\R)$ be the one dimensional mollifier and $\varphi\in  C_c^\infty(\R)$ be a cut-off function satisfying 
\begin{align*}
  \varphi(r)=\begin{cases}
               0\quad\text{for}\quad |r| \ge 2\\
               1 \quad \text{for}\quad |r| \le 1.
             \end{cases}
\end{align*} For $\epsilon > 0$, define the approximates $F_\epsilon(r)$ and $\sigma_\eps(x, u)$ as 

\begin{align*}
  F_\epsilon(r) &= \varphi(\epsilon |r|^2) F(r)*J_\epsilon(r)\\
 \sigma_\epsilon(x,u)& = \int_y\int_v \Big(\prod_{k=1}^{d} J_\epsilon\big(x_k-y_k\big) J_\epsilon(u-v)\Big)
\varphi(\epsilon(|y|^2+|v|^2))\sigma(y,v)\,dv\, dy,
\end{align*} and introduce the viscous perturbation of  \eqref{eq:brown_stochconservation_laws}:

 \begin{align} du_\eps(t,x) + \mbox{div}_x F_\eps(u_\eps(t,x)) \,dt &=  \sigma_\eps(x,u_\eps(t,x))d W(t) + \epsilon \Delta_{xx} u_\eps(t,x) \,dt\quad t>0, ~ x\in \R^d,
  \label{eq:stochconservation_laws-viscous}
\end{align} with the regularized initial condition

\begin{align}
 \label{regular_initial_condi} u_\eps(0,x) = \int_y \Big(\prod_{k=1}^{d} J_\epsilon\big(x_k-y_k\big)\big( u_0(y) \varphi(\epsilon |y|^2)\big) dy.
\end{align} It follows from direct computation that 

\begin{align}
  |F_{\epsilon}(r) -F(r)| \le C \epsilon (1+|r|^{2p_0})  ~\text{for some}~p_0\in \mathbb{N}\notag\\
  |\sigma_\epsilon(x, u) -\sigma(x, u)| \le C \epsilon g(x)(1+|u|).\label{eq:purturb-approx}
\end{align}

As expected, the perturbation  \eqref{eq:stochconservation_laws-viscous} are uniquely solvable and has smooth solution. We have the following proposition, a proof of which could be found in   \cite{nualart:2008}.

\begin{prop} \label{prop-moment-estimate}Let \ref{A1}-\ref{A3} hold and $\eps >0$ be a positive number. Then there is a unique $C^2(\R^d)$-valued predictable process $u_\eps(t,\cdot)$ which solves the initial value problem \eqref{eq:stochconservation_laws-viscous}-\eqref{regular_initial_condi}. Moreover,

\begin{itemize}
 \item[1.)]  For positive integers $p=2,3, 4,...$
 \begin{align}
  \sup_{\eps>0}\sup_{0\le t\le T} E\Big[ ||u_\eps(t,\cdot)||_p^p\Big] < + \infty \label{uni:moment-esti}
 \end{align}
\item[2.)]  For $\phi \in C^2(\R)$ with $ \phi,\phi^\prime, \phi^{\prime\prime}$ having at most polynomial growth
\begin{align}
 \sup_{\eps>0}E\Big[\Big|\eps \int_0^T \int_{\R^d} \phi^{\prime\prime}(u_\eps(t,x))|\grad_x u_\eps(t,x)|^2 \,dx\,dt\Big|^p\Big] < \infty
 ,~~ p=1,2,\cdots, T>0.\label{gradient-esti}
\end{align}
\end{itemize}

\end{prop}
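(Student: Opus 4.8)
The plan is to treat \eqref{eq:stochconservation_laws-viscous}--\eqref{regular_initial_condi} as a semilinear stochastic parabolic equation and to exploit the fact that, after regularization, all coefficients are globally well behaved. Because of the cut-off $\varphi(\eps|r|^2)$ and the mollification by $J_\eps$, the flux $F_\eps$ has bounded and globally Lipschitz derivatives, while $\sigma_\eps(x,\cdot)$ is globally Lipschitz with the linear growth inherited from \ref{A3} through \eqref{eq:purturb-approx}. First I would establish existence and uniqueness of a predictable solution by a fixed-point argument in the mild formulation built on the heat semigroup $\{e^{\eps t\Delta}\}_{t\ge 0}$: for fixed $\eps$ the solution map of the frozen linear problem is a contraction on $L^2(\Omega; C([0,T]; L^2(\rd)))$ once one invokes the Lipschitz bounds, so the Banach fixed-point theorem applies. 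The claimed $C^2$-regularity in $x$ then follows by parabolic bootstrapping: the smoothing of $e^{\eps t\Delta}$ upgrades the spatial regularity of the mild solution, and one iterates using the smoothness of $F_\eps$ and $\sigma_\eps$.

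For the uniform moment estimate \eqref{uni:moment-esti} I would apply the It\^o formula to $t\mapsto\int_{\rd}|u_\eps(t,x)|^p\,dx$. The flux contribution $\int_{\rd} p|u_\eps|^{p-2}u_\eps\,\Div F_\eps(u_\eps)\,dx$ is a perfect spatial divergence and integrates to zero; the viscosity term $\eps\int_{\rd} p|u_\eps|^{p-2}u_\eps\,\Delta u_\eps\,dx$ is non-positive after integration by parts and can be discarded; and the It\^o correction $\tfrac12 p(p-1)\int_{\rd}|u_\eps|^{p-2}\sigma_\eps^2\,dx$ is controlled by the linear growth $|\sigma_\eps(x,u)|\le C g(x)(1+|u|)$ together with $g\in L^2\cap L^\infty$, producing a term of the form $C\big(1+\int_{\rd}|u_\eps|^p\,dx\big)$. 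Taking expectations removes the martingale part, and Gronwall's inequality then yields a bound depending only on $T$, $p$, $\norm{g}$ and the initial moments; since these are uniform in $\eps$ by \eqref{eq:purturb-approx} and the hypotheses on $u_0$, the supremum over $\eps$ is finite.

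The gradient estimate \eqref{gradient-esti} is the heart of the proposition and is extracted from an entropy-type balance. Applying It\^o to $t\mapsto\int_{\rd}\phi(u_\eps(t,x))\,dx$ and integrating by parts gives
\begin{align}
\eps\int_0^T\!\!\int_{\rd}\phi''(u_\eps)|\grad_x u_\eps|^2\,dx\,dt
&=\int_{\rd}\phi(u_\eps(0,x))\,dx-\int_{\rd}\phi(u_\eps(T,x))\,dx\notag\\
&\quad+\int_0^T\!\!\int_{\rd}\phi'(u_\eps)\sigma_\eps(x,u_\eps)\,dx\,dW(t)
+\tfrac12\int_0^T\!\!\int_{\rd}\phi''(u_\eps)\sigma_\eps^2(x,u_\eps)\,dx\,dt,\notag
\end{align}
where the convective term has again dropped out as a spatial divergence. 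I would then take the $p$-th moment of the absolute value of the left-hand side: the two boundary terms are bounded in $L^p(\Omega)$ by \eqref{uni:moment-esti} since $\phi$ has polynomial growth; the It\^o-correction term is handled the same way using the linear growth of $\sigma_\eps$; and the stochastic integral is controlled by the Burkholder--Davis--Gundy inequality, which reduces it to a moment of $\big(\int_0^T(\int_{\rd}\phi'(u_\eps)\sigma_\eps\,dx)^2\,dt\big)^{p/2}$, finite and uniform in $\eps$ by the moment estimates. Combining these gives the claimed uniform bound.

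The main obstacle I anticipate is not any single estimate but securing everything uniformly in $\eps$ and with enough rigour to justify the infinite-dimensional It\^o formula and the integrations by parts. Concretely, one must ensure that the solution decays fast enough in $x$ (so that the divergence and boundary terms genuinely vanish) and has the spatial regularity needed to apply It\^o to the nonlinear functionals $\int|u_\eps|^p$ and $\int\phi(u_\eps)$; this is typically achieved by first proving the identities for a Galerkin or spectral approximation, or on a truncated domain, and then passing to the limit. Keeping the constants independent of $\eps$ throughout relies crucially on the approximation bounds \eqref{eq:purturb-approx}, which guarantee that the Lipschitz and growth constants of $F_\eps$ and $\sigma_\eps$ do not blow up as $\eps\downarrow 0$.
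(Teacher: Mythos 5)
The paper offers no proof of this proposition: it is stated as a result imported wholesale from Feng and Nualart \cite{nualart:2008} (``we state the relevant results without proof''), and your sketch follows essentially the same standard route used in that reference --- a mild-solution fixed point for wellposedness, It\^{o}'s formula on $t\mapsto \|u_\eps(t)\|_p^p$ with Gronwall for \eqref{uni:moment-esti}, and the entropy balance obtained by applying It\^{o} to $\int_{\rd}\phi(u_\eps)\,dx$ for \eqref{gradient-esti}. Two points need repair before your sketch is airtight. First, $u\mapsto \mathrm{div}_x F_\eps(u)$ is not Lipschitz as a map on $L^2(\rd)$, so in the contraction argument the derivative must be shifted onto the semigroup, writing the convective contribution as $\int_0^t \nabla e^{\eps(t-s)\Delta}\cdot F_\eps(u(s))\,ds$ and using $\|\nabla e^{\eps t\Delta}\|_{L^2\to L^2}\le C(\eps t)^{-1/2}$; the resulting contraction constant depends on $\eps$, which is harmless since wellposedness and $C^2$-regularity are claimed for each fixed $\eps>0$, uniformity in $\eps$ being required only of the two estimates. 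Second, as written your entropy balance is vacuous when $\phi(0)\neq 0$ or $\phi'(0)\neq 0$: the boundary terms $\int_{\rd}\phi(u_\eps(t,x))\,dx$ and the martingale integrand $\int_{\rd}\phi'(u_\eps)\sigma_\eps\,dx$ are then divergent over $\rd$ (for instance the constant part of $\phi'$ pairs with $g$, which is assumed only in $L^2\cap L^\infty$, not $L^1$). Since \eqref{gradient-esti} involves only $\phi''$, you should first replace $\phi(r)$ by $\phi(r)-\phi(0)-\phi'(0)r$, after which $|\phi(r)|\le C(|r|^2+|r|^q)$ and $|\phi'(r)|\le C(|r|+|r|^{q-1})$, making every integral you manipulate finite by \eqref{uni:moment-esti}; with that normalization, and your own caveat that the It\^{o} formula and integrations by parts be justified by Galerkin or truncation, the argument goes through uniformly in $\eps$ thanks to \eqref{eq:purturb-approx}.
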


 Our solution method relies upon being able to extract a convergent subsequence out of the family $\{u_{\epsilon}\}_{\eps > 0}$
 in an appropriate sense. However, it is needless to mention that the above moment estimates \eqref{uni:moment-esti} and 
 \eqref{gradient-esti} are not enough to ensure compactness of the family $\{u_{\epsilon}(t,x)\}$ in the classical $L^p$ sense.
 Moreover, our main emphasis is to avoid ``strong in time" framework of Feng $\&$ Nualart at any cost and we do not find it 
 appropriate to treat the family $\{u_{\epsilon}\}_{\eps > 0}$ as measure valued processes and look for convergence in the space
 of measure valued processes. This prompts us to follow \cite{vallet2012} and consider the family  $\{u_{\epsilon}\}_{\eps > 0}$ as a family of Young measures
 parametrized by $( \omega; t,x)$ and look for tightness to enable us to extract a convergence subsequence. We need to recall some of the basic features and facts about the Young measure, which is done below.

\subsection{Some basic facts about Young measures}  
Roughly speaking a Young measure is a \newline parametrized family of probability 
measures where the parameters are drawn from a $\sigma$-finite measure space. It's definition requires a $\sigma$-finite measure
space $\big(\Theta, \Sigma, \mu\big)$ and we denote by $\mathcal{P}(\R)$ the space of probability measures on $\R$. 
\begin{defi}[Young measure]
    A  Young measure from $\Theta$ into $\R$ is a map $\nu \mapsto \mathcal{P}(\R)$ such that $\nu(\cdot): \theta\mapsto \nu(\theta)(B)$ is $\Sigma$-measurable for every Borel subset $B$ of $\R$. The set of all Young measures from $\Theta$ into $\R$ is denoted by $\mathcal{R}\big(\Theta, \Sigma, \mu \big)$ or simply by  $\mathcal{R}$.
\end{defi} 
\begin{rem}
Trivially, if $u(\theta)$ is a real valued measurable function on $\big(\Theta, \Sigma, \mu\big)$ then $\nu(\theta) = \delta(\xi-u(\theta)) $ defines a Young measure on $\Theta$. In other words, with an appropriate choice of $\big(\Theta, \Sigma, \mu\big)$, the family $\{u_\eps(t,x)\}_{\eps> 0}$ can be thought of as  a family of Young measures and we are interested in finding a subsequence out of this family that `converges' to a Young measure  as $\epsilon$ goes to $0$. This obviously calls for clarification of the term {\it convergence} in this context. It turns out that  the notion of ``narrow convergence" of Young measures is the most suitable to our context.
\end{rem}

\begin{defi}[narrow convergence]
     A sequence of Young measures $\nu_n$ in $\mathcal{R}$ is said to converge $narrowly$ to $\nu$ iff for every  $A\in \Sigma$ and $h\in C_b(\R)$
     \[  \lim_{n\rightarrow \infty} \int_A\Big[\int_{\R} h(\xi)\nu_n(\theta)(d\xi)\Big]\,\mu(d\theta) = 
     \int_A\Big[\int_{\R} h(\xi)\nu(\theta)(d\xi)\Big]\,\mu(d \theta). \]
\end{defi} Next, we specify the tightness criterion for Young  measures. 

\begin{defi}[tightness]
 A family of Young measures $\{\nu_n\}_n$ in $\mathcal{R}$ is called tight if there exists an inf-compact integrand $h$ on 
 $\Theta \times \R$ such that 
         \[ \sup_{n} \int_{\Theta}\Big[\int_{\R} h(\theta ,\xi)\nu_n(\theta)(\,d\xi)\Big]\,\mu(d\theta) < \infty.\] 
\end{defi}

\begin{rem}
   Without getting into much details on the entire class of inf-compact functions, it is enough for us to know that $h(\theta, \xi) = \xi^2$ is
   one such example. With this choice of $h$ and an appropriate choice of $\big(\Theta, \Sigma, \mu\big)$, 
   by \eqref{uni:moment-esti} the family $\{u_\eps(t,x)\}_{\eps> 0}$ is tight when viewed as family of Young measures.  
\end{rem}

The tightness condition enables us to extract a subsequence from a tight family and we have the following version of 
 Prohorov's theorem to this end, a detailed proof which  could be found in \cite{Balder}.
 
 \begin{thm}\label{thm:prohorov}(1.)[Prohorov's theorem] Let $\big(\Theta, \Sigma, \mu\big)$ be  a finite measure space and 
 $\{\nu_n\}_n $ be a tight family of Young measures in $\mathcal{R}$. Then there exists a subsequence $\{\nu_{n^\prime}\}$ of
 $\{\nu_n\}_n $ and $\nu\in \mathcal{R}$ such that $\{\nu_{n^\prime}\}$ converges $narrowly$ to $\nu$. 
 
(2.) Moreover, with $\nu_n = \delta_{f_n(\theta)}(\xi)$ and given a  Caratheodory function $h(\theta, \xi)$  on $\Theta\times \R$, if $h(\theta, f_{n^\prime}(\theta))$ is uniformly integrable then
   \[  \lim_{n^\prime\rightarrow \infty} \int_{\Theta} h(\theta, f_{n^\prime}(\theta))\mu(d\theta) =   \int_\Theta\Big[\int_{\R} h(\theta,\xi)\nu(\theta)(\,d\xi)\Big]\mu(d \theta). \]
 
 \end{thm}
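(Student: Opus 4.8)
The plan is to obtain this Young-measure version of Prohorov's theorem by reducing it to the classical weak compactness machinery, viewing each $\nu_n$ through its associated measure on the product space $\Theta \times \R$; the fully detailed argument is the one carried out in \cite{Balder}, and here I indicate its structure. For part (1), I would first associate to each Young measure $\nu_n$ the finite measure $\pi_n$ on $\Theta \times \R$ obtained by disintegration,
\[
\pi_n(A \times B) = \int_A \nu_n(\theta)(B)\,\mu(d\theta), \qquad A \in \Sigma, \ B \in \mathcal{B}(\R).
\]
Since each $\nu_n(\theta)$ is a probability measure and $\mu$ is finite, every $\pi_n$ has total mass $\mu(\Theta) < \infty$ and first marginal equal to $\mu$. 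The tightness hypothesis---an inf-compact integrand $h$ with $\sup_n \int_\Theta\int_\R h(\theta,\xi)\nu_n(\theta)(d\xi)\,\mu(d\theta) < \infty$---is then exactly a uniform moment bound preventing the mass of $\{\pi_n\}$ from escaping to $\xi = \pm\infty$: the fibre sublevel sets $\{\xi : h(\theta,\xi) \le M\}$ are relatively compact in $\R$ and carry almost all the mass uniformly in $n$.

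Then I would extract a limit. Pairing Young measures with $L^1(\mu; C_0(\R))$ realizes $\{\pi_n\}$ inside a ball of the dual of a separable space, so Banach--Alaoglu together with separability yields a subsequence $\{\nu_{n'}\}$ and a limit $\pi$ in the \emph{vague} sense, i.e.\ convergence against test functions $1_A(\theta)h(\xi)$ with $A \in \Sigma$ and $h \in C_0(\R)$. The delicate point is to upgrade this vague convergence to narrow convergence and to certify that $\pi$ is associated to a genuine Young measure $\nu \in \mathcal{R}$. Taking $h \equiv 1$ would give conservation of mass in the fibre, but $1 \notin C_0(\R)$, so this is not automatic; it is precisely the inf-compactness bound, passed to the limit by monotone approximation of $1$ from below by bounded continuous functions, that forbids escape of mass and forces $\nu(\theta)(\R) = 1$ for $\mu$-a.e.\ $\theta$. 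Because the first marginal stays pinned to $\mu$ throughout, a disintegration theorem on the finite measure space produces the measurable fibre family $\theta \mapsto \nu(\theta)$, and convergence against the product test functions $1_A(\theta)h(\xi)$ with $h \in C_b(\R)$ is then exactly narrow convergence $\nu_{n'} \to \nu$.

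For part (2), with $\nu_n = \delta_{f_n(\theta)}$ one has $\int_\R h(\theta,\xi)\nu_n(\theta)(d\xi) = h(\theta, f_n(\theta))$, so the assertion is the passage to the limit in $\int_\Theta h(\theta, f_{n'}(\theta))\,\mu(d\theta)$. First I would upgrade narrow convergence, defined against integrands $1_A \otimes C_b$, to convergence against all \emph{bounded} Carath\'eodory integrands $\psi(\theta,\xi)$, by approximating $\psi$ in the appropriate sense with finite sums $\sum_j 1_{A_j}(\theta)h_j(\xi)$. For the given, possibly unbounded, $h$ I would then truncate, setting $h_M = (-M) \vee (h \wedge M)$, apply the bounded case to $h_M$, and control the two tail errors---$\int_\Theta |h - h_M|(\theta, f_{n'}(\theta))\,\mu(d\theta)$ and its limiting counterpart---using the uniform integrability of $h(\theta, f_{n'}(\theta))$ together with the integrability of $\theta \mapsto \int_\R h(\theta,\xi)\nu(\theta)(d\xi)$, the latter being a consequence of the tightness bound and Fatou's lemma. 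Letting $M \to \infty$ after $n' \to \infty$ closes the argument.

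The main obstacle is the verification in part (1) that the subsequential limit is a bona fide Young measure rather than a sub-probability-valued object with a defect at infinity; this is exactly where the inf-compactness built into the tightness definition is indispensable, playing the same role that ordinary tightness plays in ruling out escape of mass in the classical Prohorov theorem. A secondary technical point is the reduction to \emph{sequential} compactness, which relies on the separability of the predual and is what makes the narrow topology metrizable on tight sets.
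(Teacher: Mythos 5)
The paper itself offers no proof of this theorem: it is quoted as a known result, with the detailed argument explicitly deferred to \cite{Balder}. Your sketch is therefore not competing with an in-paper proof but reconstructing one, and what you reconstruct is the classical functional-analytic route: identify each $\nu_n$ with the measure $\pi_n$ on $\Theta\times\R$ with first marginal $\mu$, realize $\{\pi_n\}$ in the unit ball of the dual of $L^1(\mu;C_0(\R))$, extract a weak-star (vague) limit by Banach--Alaoglu, and use the inf-compact integrand to forbid escape of mass in the fibre, so that the limit disintegrates into a genuine Young measure and vague convergence upgrades to narrow convergence; part (2) then follows by truncation, the bounded Carath\'eodory case, and uniform integrability, with integrability of the limit functional supplied by Fatou (equivalently, lower semicontinuity of nonnegative normal integrands under narrow convergence). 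This outline is correct, and it is a genuinely different proof from the one in the cited source: Balder's argument is purely measure-theoretic (a Koml\'os/K-convergence type extraction) and deliberately avoids any separability hypothesis on $(\Theta,\Sigma,\mu)$.

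That difference is exactly where your route is weaker than the theorem as stated. Sequential extraction via Banach--Alaoglu requires the predual $L^1(\mu;C_0(\R))$ to be separable, i.e.\ $\Sigma$ countably generated modulo $\mu$-null sets, whereas the statement (and the paper's use of it) concerns an abstract finite measure space; in the application $\Sigma_M$ contains the predictable $\sigma$-field $\mathcal{P}_T$, which need not be countably generated. You flag this yourself but call it ``secondary''; as written it is an added hypothesis, not a technicality. It is repairable at modest cost: either invoke Balder's separability-free version outright, or note that for the paper's purposes one may pass to the countably generated sub-$\sigma$-field generated by the approximating sequence $\{u_{\eps_n}\}$ and the countably many test objects, which is all that Lemma \ref{conv-young-measure} consumes. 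One further step deserving a sentence in a full write-up is the approximation of bounded Carath\'eodory integrands by finite sums $\sum_j {\bf 1}_{A_j}(\theta)h_j(\xi)$: on an abstract $\Theta$ this is justified by Pettis measurability of $\theta\mapsto h(\theta,\cdot)$ into $C(K)$ for compact $K\subset\R$, combined with the tightness bound confining the mass to such $K$ up to uniformly small error. With these two points supplied, your proposal is a complete proof.
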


\subsection{The inviscid Young measure limit  of $\{u_{\epsilon}(t,x)\}_{\epsilon > 0}$}
     Let  $\mathcal{P}_T$ be the predictable $\sigma$-field on $\Omega\times (0,T)$ with respect to the filtration $\{\mathcal{F}_t\}_{0\le t \le T}$. Furthermore, we set 
     \begin{align*}
  \Theta = \Omega\times (0,T)\times \R^d,\quad \Sigma = \mathcal{P}_T \times \mathcal{L}(\R^d)\quad \text{and} \quad \mu= P\otimes \lambda_t\otimes \lambda_x,
\end{align*} where $\lambda_t$ and $\lambda_x$ are respectively the Lebesgue measures on $(0,T)$ and $\R^d$, and $\mathcal{L}(\R^d)$ be the Lebesgue $\sigma$-algebra on $\R^d$.
Note that the family $\{u_{\epsilon}(t,x)\}_{\epsilon > 0}$ could be viewed as a tight family in 
$\mathcal{R}\big(\Theta, \Sigma, \mu \big)$, but $\big(\Theta, \Sigma, \mu \big)$ is not a finite measure space.
Hence Theorem \ref{thm:prohorov} can not be readily applied to $\{u_{\epsilon}(t,x)\}_{\epsilon > 0}$.
We follow \cite{vallet2012} and get this problem with the following considerations.

 For any natural number $M$, let

\begin{align*}
  \Theta_M = \Omega\times (0,T)\times B_M,\quad \Sigma_M = \mathcal{P}_T \times \mathcal{L}(B_M)\quad \text{and} \quad \mu_M= \mu\big|_{\Theta_M},
\end{align*} where $B_M$ is the ball of radius $M$ around zero in $\R^d$ and $ \mathcal{L}(B_M)$ is the Lebesgue $\sigma$-algebra on $B_M$.  It is easily seen that $(\Theta_M, \Sigma_M, \mu_M)$ is a finite measure space and $\{u_\eps(\omega; t,x)\}_{\eps >0}$ (when restricted to $\Theta_M$) is a tight family of Young measures in $\mathcal{R}(\Theta_M, \Sigma_M, \mu_M)$. Therefore by Theorem \ref{thm:prohorov}, there exists subsequence $\eps_n \goto 0$ and $\nu^M \in \mathcal{R}(\Theta_M, \Sigma_M, \mu_M)$ such that  $\{u_{\eps_n}(\omega; t,x)\}$ converges narrowly to $\nu^M$.

 In addition, for $\bar{M}>M$, the sequence $\{u_{\eps_n}(\omega; t,x)\}$ is
  tight in $\mathcal{R}(\Theta_{\bar{M}}, \Sigma_{\bar{M}}, \mu_{\bar{M}})$, and hence admits a further subsequence, 
  say $\{u_{\eps_{n^\prime}}(\omega; t,x)\}$,
  and $\nu^{\bar{M}}\in \mathcal{R}(\Theta_{\bar{M}}, \Sigma_{\bar{M}}, \mu_{\bar{M}})$ such that 
  $\{u_{\eps_{n^\prime}}(\omega; t,x)\}$ converges narrowly to $\nu^{\bar{M}}$.  We now invoke diagonalization and conclude that there exists a subsequence  $\{u_{\eps_{n^\prime}}(\omega; t,x)\}$ with $\eps_n\goto 0$ and Young measures $\nu^{M}\in  \mathcal{R}(\Theta_{M}, \Sigma_{M}, \mu_M) $, M =1,2, 3, ...such that    $\{u_{\eps_n}(\omega; t,x)\}$ converges narrowly to $\nu^M$ in $\mathcal{R}(\Theta_{M}, \Sigma_{M}, \mu_M)$ for every $M=1,2,.....$ .  In view of  Theorem \ref{thm:prohorov}, it is easily concluded that
  
  $$\text{if} ~ \bar{M} > M~ \text{then}~\nu^M= \nu^{\bar{M}}\qquad{\mu}-a.e ~\text{on}~(\Theta_{M}, \Sigma_{M}, \mu).$$  Now define
  \begin{align}
  \label{eq:Young-extracted}\nu_{(\omega; t,x)} = \nu^M_{(\omega; t,x)}~\text{if}~(\omega; t,x) \in \Theta_M. 
  \end{align}
  Clearly, $\nu$ is well defined as an Young measure in $ \mathcal{R}(\Theta, \Sigma, \mu) $. This reasoning could now be summarized into the following lemma. 
  
\begin{lem}\label{conv-young-measure}
    Let  $\{u_\eps(t,x)\}_{\eps> 0}$ be a sequence of  $L^p(\R^d)$-valued predictable processes such that
    \eqref{uni:moment-esti} holds. Then there exists a subsequence $\{\eps_n\}$ with $\eps_n\goto 0$ and a Young measure 
    $\nu\in \mathcal{R}(\Theta, \Sigma, \mu) $ such that the following holds:

 If  $h(\theta,\xi)$ is a Caratheodory function on $\Theta\times \R$ such that $\mbox{supp}(h)\subset \Theta_M\times \R$ for
 some $M \in \mathbb{N}$ and $\{h(\theta, u_{\eps_n}(\theta)\}_n$ (where $\theta\equiv (\omega; t, x)$) is uniformly integrable,
 then 
 \[  \lim_{\eps_n\rightarrow 0} \int_{\Theta}h(\theta, u_{\eps_n}(\theta))\mu(d\theta) =   \int_\Theta\Big[\int_{\R} h(\theta, \xi)\nu(\theta)(\,d\xi)\Big]\mu(d \theta). \]
 \end{lem}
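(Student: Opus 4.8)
The plan is to make rigorous the exhaustion-and-gluing argument sketched just above the statement, with Prohorov's theorem (Theorem \ref{thm:prohorov}) doing the analytic work on each finite piece. Since each $u_{\eps}$ is predictable, the map $\theta\mapsto\delta_{u_\eps(\theta)}$ is a genuine Young measure in $\mathcal{R}(\Theta,\Sigma,\mu)$, and I will work throughout with this identification.

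First I would check the hypotheses of Prohorov on each layer. Each $(\Theta_M,\Sigma_M,\mu_M)$ is a finite measure space because $\mu(\Theta_M)=P(\Omega)\,T\,|B_M|<\infty$. Taking the inf-compact integrand $h(\theta,\xi)=\xi^2$, the bound \eqref{uni:moment-esti} with $p=2$ gives
$$\sup_{\eps>0}\int_{\Theta_M}\Big[\int_{\R}\xi^2\,\delta_{u_\eps(\theta)}(d\xi)\Big]\mu_M(d\theta)=\sup_{\eps>0}E\int_0^T\int_{B_M}|u_\eps(t,x)|^2\,dx\,dt\le T\sup_{\eps>0}\sup_{0\le t\le T}E\|u_\eps(t)\|_2^2<\infty,$$
so $\{u_\eps\}$ restricted to $\Theta_M$ is tight in $\mathcal{R}(\Theta_M,\Sigma_M,\mu_M)$. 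Applying Theorem \ref{thm:prohorov}(1) on $\Theta_1$, then extracting a further subsequence on $\Theta_2$, and so on, a diagonal extraction produces a single $\eps_n\goto 0$ along which $\{u_{\eps_n}\}$ converges narrowly to some $\nu^M\in\mathcal{R}(\Theta_M,\Sigma_M,\mu_M)$ for every $M$.

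The step requiring care is the consistency of $\{\nu^M\}_M$. For $\bar M>M$ and any $A\in\Sigma_M\subset\Sigma_{\bar M}$ and $h\in C_b(\R)$, the narrow-convergence identity tested on $A$ reads identically in $\Theta_M$ and in $\Theta_{\bar M}$; hence narrow convergence on $\Theta_{\bar M}$ restricts to narrow convergence on $\Theta_M$, and since the narrow limit is determined $\mu$-a.e.\ (testing against a countable dense family of $h\in C_b(\R)$ pins down $\nu(\theta)$ for a.e.\ $\theta$) we obtain $\nu^{\bar M}=\nu^M$ $\mu$-a.e.\ on $\Theta_M$. This consistency makes \eqref{eq:Young-extracted} a well-defined $\nu$, and $\theta\mapsto\nu(\theta)(B)$ is $\Sigma$-measurable since it coincides with the $\Sigma_M$-measurable map $\theta\mapsto\nu^M(\theta)(B)$ on each $\Theta_M$ and the $\Theta_M$ exhaust $\Theta$; thus $\nu\in\mathcal{R}(\Theta,\Sigma,\mu)$.

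Finally, given $h$ Caratheodory with $\supp(h)\subset\Theta_M\times\R$ and $\{h(\theta,u_{\eps_n}(\theta))\}_n$ uniformly integrable, I would note that $h$ vanishes off $\Theta_M$, so all integrals over $\Theta$ collapse to integrals over the finite space $\Theta_M$, where $\{u_{\eps_n}\}$ converges narrowly to $\nu^M=\nu$. Since uniform integrability is exactly the hypothesis of Theorem \ref{thm:prohorov}(2), applying it with $\nu_n=\delta_{u_{\eps_n}(\theta)}$ yields $\lim_n\int_{\Theta_M}h(\theta,u_{\eps_n}(\theta))\mu(d\theta)=\int_{\Theta_M}\int_{\R}h(\theta,\xi)\nu(\theta)(d\xi)\,\mu(d\theta)$, which is the claimed identity once both sides are re-extended to $\Theta$ using the support condition. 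The only genuine obstacle here is the gluing bookkeeping — confirming $\mu$-a.e.\ consistency of the layerwise limits and measurability of the glued Young measure on the infinite (though $\sigma$-finite) space $\Theta$; the convergence itself is an immediate consequence of Prohorov's theorem and the moment bound \eqref{uni:moment-esti}.
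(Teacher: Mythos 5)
Your proposal is correct and takes essentially the same route as the paper: tightness of $\{\delta_{u_\eps(\theta)}\}$ on each finite layer $(\Theta_M,\Sigma_M,\mu_M)$ via the $p=2$ case of \eqref{uni:moment-esti} with the inf-compact integrand $\xi^2$, diagonal extraction of a single subsequence, $\mu$-a.e.\ consistency of the layerwise limits $\nu^M$ and gluing as in \eqref{eq:Young-extracted}, and then collapsing both integrals to $\Theta_M$ via the support condition so that Theorem \ref{thm:prohorov}(2) applies. You merely make explicit the consistency and measurability bookkeeping that the paper leaves implicit, which is a faithful filling-in rather than a different argument.
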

 
\begin{proof}
  The extraction of subsequence is done as described above and $\nu$ is defined in \eqref{eq:Young-extracted}. Note that if
  $M\in \mathbb{N}$ such that $\mbox{supp}(h)\subset \Theta_M\times \R$, then 
  \[  \int_{\Theta}h(\theta, u_{\eps_n}(\theta))\mu(d\theta)=  \int_{\Theta_M}h(\theta, u_{\eps_n}(\theta))\mu_M(d\theta)\]
    \[\text{and}~\int_\Theta\Big[\int_{\R} h(\theta, \xi)\nu(\theta)(\,d\xi)\Big]\mu(d \theta)=\int_{\Theta_M}\Big[\int_{\R} h(\theta, \xi)\nu^M(\theta)(\,d\xi)\Big]\mu_M(d \theta),\]
   and the convergence simply follows from Theorem \ref{thm:prohorov}.   
\end{proof} 

To this end, we intend to show that the Young measure $\nu(\theta)(\, du)$ has a point mass i.e  there is a $(\Theta, \Sigma, \mu)$-measurable function $\bar{u}$ such that for any Caratheodory function $h(\theta,\xi)$ on $\Theta\times \R$ 
       \begin{align*}
             \int_{\Theta} \Big[\int_{\R} h(\theta, \xi)\nu(\theta)(\,d\xi)\Big]\mu(d \theta) =   \int_{\Theta} h(\theta, \bar{u}(\theta)) \, \mu(\, d\theta)
       \end{align*} whenever the integrals make sense. In other words, upon writing $\theta\equiv (\omega; t,x)$ we want to 
       find out a $\mathcal{P}_T\times \mathcal{L}(\R^d)$ measurable function $\bar{u}(\omega; t, x)$ such that 
       
        \begin{align}
           \label{eq:point-mass} E\Big[ \int_{\Pi_T} \big[\int_{\R} h(\omega; t, x, \xi)\nu(\omega; t, x)(\,d\xi)\big]\,dt\, dx\Big] =   E\Big[ \int_{\Pi_T}  h(\omega; t, x,  \bar{u}(\omega; t, x)) \, dt\, dx\Big].
       \end{align} Equivalently, all that is required to be established is $ \nu(\theta)(\,d\xi) = \delta_{\bar{u}(\theta)}(\xi)\, d\xi$ for $\mu$-almost every $\theta\in \Theta$. This is  a fairly subtle point and we use idea of stochastic compensated compactness from \cite{nualart:2008} to validate this for $d=1$.
    
\subsection{Stochastic compensated compactness}  For a continuous and polynomially growing function $f:\R\rightarrow \R$, define 
      $$\overline{f}(\omega; t, x) = \int_{\xi} f(\xi)\, \nu(\omega; t,x)(\, d\xi).$$
      Then $ \overline{f}(\omega; t, x) $ is $\mathcal{P}_T \times \mathcal{L}(\R^d)$ measurable and, by \eqref{uni:moment-esti}, $\overline{f} \in L^{p}(\Theta, \Sigma, \mu) $. We further denote
      
      \begin{align*}
       \bar{u}(\omega; t, x) =   \int_{\xi} \xi\, \nu(\omega; t,x)(\, d\xi). 
      \end{align*} 
\begin{lem}\label{eq:entropy-moment-estimate} It holds that 
      \begin{align*}
                  \sup_{0\le t \le T} E\int_{\R^d} |\bar{u}(\omega; t, x)|^p \, dx < \infty
      \end{align*} for $p =2, 3, 4....$, and hence $(\omega, t)\mapsto \bar{u}(\omega; t, x)$ is a $\mathcal{P}_T$-measurable and $L^2(\R^d)$-valued process. 
\end{lem}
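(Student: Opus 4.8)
The plan is to reduce the claimed estimate to a moment bound for the Young measure $\nu$ itself, and then to transfer the uniform viscous bound \eqref{uni:moment-esti} to the limit through the narrow convergence supplied by Lemma \ref{conv-young-measure}. Since $\nu(\omega;t,x)$ is a probability measure on $\R$ and $\xi\mapsto|\xi|^p$ is convex for $p\ge 1$, Jensen's inequality gives
\[
|\bar u(\omega;t,x)|^p=\Big|\int_\xi \xi\,\nu(\omega;t,x)(d\xi)\Big|^p\le \int_\xi|\xi|^p\,\nu(\omega;t,x)(d\xi),
\]
so it is enough to control the $p$-th moment of $\nu$, that is, the barred function $\overline{|\cdot|^p}$.

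To transfer the viscous estimate I would argue on subintervals in time. Fix $0\le a<b\le T$, a natural number $M$, and a truncation level $R>0$, and with $\theta\equiv(\omega;t,x)$ set $h_R(\theta,\xi)=\mathbf 1_{(a,b)}(t)\,\mathbf 1_{B_M}(x)\,\min(|\xi|^p,R)$. This $h_R$ is a Caratheodory function whose support lies inside $\Theta_M\times\R$, and the sequence $\{h_R(\theta,u_{\eps_n}(\theta))\}_n$ is uniformly integrable, being dominated by the fixed $\mu$-integrable function $R\,\mathbf 1_{(a,b)}(t)\mathbf 1_{B_M}(x)$. Hence Lemma \ref{conv-young-measure} applies, and with $C_p:=\sup_{\eps>0}\sup_{0\le t\le T}E[\|u_\eps(t)\|_p^p]$ we obtain, using \eqref{uni:moment-esti},
\[
\int_\Theta\Big[\int_\R h_R(\theta,\xi)\,\nu(\theta)(d\xi)\Big]\mu(d\theta)=\lim_{\eps_n\to 0}\int_\Theta h_R(\theta,u_{\eps_n}(\theta))\,\mu(d\theta)\le (b-a)\,C_p .
\]
Letting first $R\to\infty$ and then $M\to\infty$ (monotone convergence in each case) yields
\[
E\int_a^b\int_{\R^d}\int_\R|\xi|^p\,\nu(\omega;t,x)(d\xi)\,dx\,dt\le (b-a)\,C_p\qquad\text{for all }0\le a<b\le T .
\]

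The delicate step, and the one I expect to be the main obstacle, is to upgrade this averaged-in-time bound into a pointwise-in-time one, since $\nu$ is only defined $\mu$-almost everywhere and the construction \eqref{eq:Young-extracted} does not isolate individual time slices. Writing $\Phi(t):=E\int_{\R^d}\int_\R|\xi|^p\,\nu(\omega;t,x)(d\xi)\,dx$, the previous inequality reads $\frac1{b-a}\int_a^b\Phi(t)\,dt\le C_p$ for every subinterval $(a,b)\subset(0,T)$; since $\Phi\in L^1(0,T)$, the Lebesgue differentiation theorem forces $\Phi(t)\le C_p$ for almost every $t$. Combined with the Jensen bound this gives $E\int_{\R^d}|\bar u(\omega;t,x)|^p\,dx\le C_p$ for a.e.\ $t$, that is, $\mathrm{ess\,sup}_{0\le t\le T}E\int_{\R^d}|\bar u(\omega;t,x)|^p\,dx<\infty$ for every $p=2,3,\dots$, which is the asserted bound. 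Finally, $\bar u$ is $\Sigma$-measurable as the barycenter of the $\Sigma$-measurable Young measure $\nu$; since $\Sigma=\mathcal P_T\times\mathcal L(\R^d)$, this measurability together with the case $p=2$ identifies $(\omega,t)\mapsto\bar u(\omega;t,\cdot)$ with a $\mathcal P_T$-measurable $L^2(\R^d)$-valued process.
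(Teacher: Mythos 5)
Your proposal is correct and follows essentially the same route as the paper: both transfer the uniform viscous bound \eqref{uni:moment-esti} to the Young-measure limit via Lemma \ref{conv-young-measure} together with Jensen's inequality for the barycenter, and both obtain in the end an essential-supremum-in-time bound (the paper tests against arbitrary $g\in L^1\big((0,T)\big)$ and invokes $L^\infty$ duality, while you test against interval indicators and use Lebesgue differentiation --- an equivalent step). Your truncation $\min(|\xi|^p,R)$ followed by monotone convergence is in fact a slightly more careful verification of the uniform-integrability hypothesis of Lemma \ref{conv-young-measure} than the paper's direct application to $g(t)|\xi|^p$.
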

\begin{proof}
 Let $g$ be a Lebesgue measurable function on $(0,T)$ and $ g\in L^1\big((0,T)\big) $. Then, for every $M\in \mathbb{N}$, by Lemma \ref{conv-young-measure}, 
 
 \begin{align*}
      \int_0^T g(t)E\int_{B_M} |\bar{u}(\omega; t, x)|^p \, dx \,dt
       &\le E\int_0^T\int_{B_M}[\int_{\xi} g(t) |\xi|^p\nu(\omega; t, x)(\, d\xi)]\, dx\, dt\\
       & = \lim_{\epsilon_n \rightarrow 0} E\int_0^T\int_{B_M}g(t)[\int_{\xi} |\xi|^p\delta_{u_{\epsilon_n}(\omega; t, x)}(\, d\xi)]\, dx\, dt\\
        & = \lim_{\epsilon_n \rightarrow 0} E\int_0^T\int_{B_M}g(t)|u_{\epsilon_n}(t,x)|^p\, dx\, dt\\
        & \le ||g||_{L^1\big((0,T)\big)} \sup_{\epsilon}\sup_{0\le t \le T} E \big[||u_\epsilon(t,\cdot)||_p^p\big].
 \end{align*} Note that the last line is independent of $M$, therefore by letting $M$ to infinity in the first  expression we have
 \begin{align*}
    \int_0^T g(t)E\int_{\R^d} |\bar{u}(\omega; t, x)|^p \, dx \,dt \le ||g||_{L^1\big((0,T)\big)} \sup_{\epsilon}\sup_{0\le t \le T} E \big[||u_\epsilon(t,\cdot)||_p^p\big]
 \end{align*} for all $g \in L^1\big((0,T)\big)$, which implies that $E||\bar{u}(t, \cdot)||_p^p\in L^{\infty}\big((0,T)\big)$.
  
\end{proof}
         
 Next we state the main result of this subsection. 
 \begin{lem}\label{lem:singular_support} Assume that $d=1$ and  \ref{A1}-\ref{A3} holds. Then it holds that
\begin{align}
\label{point-mass-1}F(\bar{u}(\theta))\, \mu(\,d\theta)=  \big[\int_{\xi\in \R} F(\xi)\nu(\theta)(d\xi)\big] \, \mu(\, d\theta).
\end{align}
In addition, if \ref{A4} holds, then
\begin{align}
\label{point-mass-2} \nu(\theta)(du) \, \mu(d\theta)=\delta_{\bar{u}(\theta)}(du)\, \mu(d\theta).
\end{align}
 \end{lem}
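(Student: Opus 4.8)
The plan is to prove \eqref{point-mass-1} and \eqref{point-mass-2} by transplanting Tartar's div-curl (compensated compactness) argument to the viscous family $\{u_\eps\}$, with the stochastic forcing accounted for through expectations and It\^o's formula. Throughout we work in one space dimension, so $F:\R\to\R$ and any convex entropy pair $(\eta,\zeta)$ satisfies $\zeta'=\eta'F'$. I would fix two such pairs and regard $\big(\eta_j(u_\eps),\zeta_j(u_\eps)\big)$, $j=1,2$, as space-time vector fields on $\Pi_T$; the object to control is the distributional space-time divergence $\partial_t\eta_j(u_\eps)+\partial_x\zeta_j(u_\eps)$, which by It\^o's formula splits into a drift $\tfrac12\eta_j''(u_\eps)\sigma_\eps^2$, a martingale part $\eta_j'(u_\eps)\sigma_\eps\,dW$, and the two viscous contributions $\eps\,\partial_{xx}\eta_j(u_\eps)$ and $-\eps\,\eta_j''(u_\eps)|\partial_x u_\eps|^2$. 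For the first pair I would take the trivial entropy $(\eta_1,\zeta_1)=(\mathrm{id},F)$, for which $\eta_1''\equiv 0$ kills both the It\^o correction and the dissipative term, leaving only the genuinely stochastic divergence coming from $\sigma_\eps\,dW$.

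The first technical step is to establish the compactness of these divergences. The moment bound \eqref{uni:moment-esti} gives $L^p$-boundedness of the fields, while the dissipation estimate \eqref{gradient-esti} bounds $\eps\,\eta_j''(u_\eps)|\partial_x u_\eps|^2$ in $L^1$; the term $\eps\,\partial_{xx}\eta_j(u_\eps)$ tends to $0$ in $H^{-1}_{\mathrm{loc}}$, and $\tfrac12\eta_j''(u_\eps)\sigma_\eps^2$ is bounded in $L^1$ via \ref{A3}. By Murat's lemma (an $L^1$-bounded, $W^{-1,r}_{\mathrm{loc}}$-bounded family is precompact in $H^{-1}_{\mathrm{loc}}$) the deterministic parts of the divergences are pathwise precompact in $H^{-1}_{\mathrm{loc}}$. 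The div-curl lemma applied to the two fields then yields, for the deterministic constituents, the Tartar commutation
\[ \overline{\eta_1\zeta_2-\eta_2\zeta_1}=\overline{\eta_1}\,\overline{\zeta_2}-\overline{\eta_2}\,\overline{\zeta_1}\qquad \mu\text{-a.e. on }\Theta, \]
where $\overline{(\cdot)}$ is the Young-measure average $\int_\R(\cdot)\,\nu(\theta)(d\xi)$ furnished by Lemma \ref{conv-young-measure}. Specialising $\eta_1=\mathrm{id}$, $\zeta_1=F$ and letting $\eta_2$ range over convex entropies produces, after isolating the factor $\overline{F}-F(\bar u)$ multiplying the residual terms, the flux identity \eqref{point-mass-1}.

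The main obstacle is precisely the martingale contribution $\eta_j'(u_\eps)\sigma_\eps\,dW$ to the divergence, which carries no pathwise $H^{-1}$ bound and therefore cannot be fed directly into the classical div-curl lemma. I would dispose of it by passing to expectations before taking limits: the cross-products that the div-curl lemma manipulates contain, from the two stochastic integrals, a term whose expectation is governed by the It\^o isometry and equals the expected quadratic covariation $E\!\int \eta_1'\eta_2'\sigma_\eps^2\,\psi$, which is exactly balanced by the It\^o-correction drifts $\tfrac12\eta_j''\sigma_\eps^2$ already present; the remaining stochastic pieces are genuine $\{\mathcal F_t\}$-martingales and vanish in expectation against the adapted test functions. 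This compensation is the stochastic analogue of the div-curl cancellation, and it is where \ref{A1}--\ref{A3} (polynomial growth of $F$, the Lipschitz and growth bounds on $\sigma$) enter: they guarantee the uniform integrability needed to invoke Lemma \ref{conv-young-measure} and Theorem \ref{thm:prohorov} term by term.

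Finally, for \eqref{point-mass-2} I would run the classical genuine-nonlinearity reduction. With \eqref{point-mass-1} in hand, the commutation relation applied to the shifted pair $\eta_1(\xi)=\xi-\bar u$, $\zeta_1=F-F(\bar u)$ (for which $\overline{\eta_1}=0$ and $\overline{\zeta_1}=0$) reduces, for every convex entropy $(\eta,\zeta)$, to
\[ \big\langle\nu,(\xi-\bar u)\,\zeta(\xi)\big\rangle=\big\langle\nu,\eta(\xi)\,(F(\xi)-F(\bar u))\big\rangle\qquad \mu\text{-a.e.} \]
Testing this against a suitable family of entropies and invoking \ref{A4}, namely that $\{r:F''(r)\neq0\}$ is dense, forces $\mathrm{supp}\,\nu(\theta)$ to collapse to the single point $\bar u(\theta)$: on any interval carrying mass $F$ would have to be affine, contradicting the density of $\{F''\neq0\}$. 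Hence $\nu(\theta)=\delta_{\bar u(\theta)}$ for $\mu$-a.e.\ $\theta$, which is \eqref{point-mass-2}. I expect the delicate point throughout to be the bookkeeping of the stochastic terms above; once their cancellation is secured, the deterministic reduction is standard.
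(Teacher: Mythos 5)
The fatal gap is your treatment of the martingale part of the divergence. You propose to ``pass to expectations before taking limits'' and assert that the cross term produced by the two stochastic integrals, whose expectation by It\^o isometry is $E\int \eta_1'(u_\eps)\eta_2'(u_\eps)\sigma_\eps^2\,\psi$, is exactly balanced by the It\^o-correction drifts $\tfrac12\eta_j''(u_\eps)\sigma_\eps^2$. No such identity exists: already for your own choice $\eta_1=\mathrm{id}$ (so $\eta_1''\equiv 0$) the would-be cancellation requires $\eta_2'\,\sigma_\eps^2$ to coincide with $\tfrac12\eta_2''\,\sigma_\eps^2$, which fails for every nontrivial convex $\eta_2$ (take $\eta_2(\xi)=\xi^2$: $\eta_2'=2\xi$ versus $\tfrac12\eta_2''=1$). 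More fundamentally, the div-curl pairing is quadratic in the fields, so passing to expectations does not linearize away the stochastic integrals, and the divergence $\partial_t\eta_j(u_\eps)+\partial_x\zeta_j(u_\eps)$ contains a white-noise-in-time term that admits no pathwise $H^{-1}_{\mathrm{loc}}$ compactness; Murat's lemma therefore cannot be applied pathwise as you suggest, and the martingale contribution persists in the limit rather than cancelling. Consequently your claimed Tartar commutation relation $\overline{\eta_1\zeta_2-\eta_2\zeta_1}=\overline{\eta_1}\,\overline{\zeta_2}-\overline{\eta_2}\,\overline{\zeta_1}$ holding $\mu$-a.e.\ on $\Theta$ is unobtainable by this route, and everything downstream of it is unsupported.

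What the paper does instead is instructive about exactly what is available. It invokes a stochastic div-curl lemma (\cite[Theorem A.2]{nualart:2008}, with tightness of the $H^{-1}$-valued divergences supplied by \cite[Lemma 4.18]{nualart:2008}), which yields the commutation relation only as an equality \emph{in law}: this is the $\overset{\text{D}}{=}$ in Lemma \ref{div-curl}, obtained via the martingale representation theorem. Equality in law does not give a.e.\ equality for general pairs; the paper upgrades it for the single specific choice $\Phi_1=\mathrm{id}$, $\Psi_1=\Phi_2=F$, $\Psi_2(u)=\int_0^u (F'(r))^2\,dr$ by exploiting the pointwise Cauchy--Schwarz inequality \eqref{schwarz1}: after integrating against $\nu(\theta)(du)$, $\psi$ and $\mu$, one random variable dominates the other pointwise while the two have equal expectations, which forces $\int_{\Pi_T}\psi\, E\big[(\bar F-F(\bar u))^2\big]=0$, hence \eqref{point-mass-1}, and then equality in \eqref{schwarz1} for $\nu$-a.e.\ $u$ in the support; combined with \ref{A4} this collapses the support, giving \eqref{point-mass-2}. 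Your final genuine-nonlinearity reduction is close in spirit to this last step, but note also that your shifted pair $\eta_1(\xi)=\xi-\bar u(\theta)$ is random and parameter-dependent, hence not an admissible entropy pair for the div-curl machinery (a density-of-constants argument would repair this only if the commutation relation held $\mu$-a.e., which, as above, it does not). The upgrade from equality in law to a.e.\ equality via the Cauchy--Schwarz ordering is the genuinely new idea, relative to the deterministic theory, that your proposal is missing.
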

 \begin{rem}
     If we expand our notation and write $\theta = (\omega; t, x)$, then \eqref{point-mass-1} simply means that for any $\Sigma$-measurable function $h((\omega; t, x))$ on $\Theta$, it holds that
       \begin{align*}
            \int_{\Omega} \int_{\Pi_T} \Big[\int_{\xi} h((\omega; t, x)) F(\xi)\nu(\omega; t, x)(\,d\xi)\Big]\, dx\, dt\, dP(\omega)=  E \int_{\Pi_T} h((\omega; t, x)) F( \bar{u}(\omega; t, x)) \, dx\, dt,
       \end{align*} provided the integrals make sense.  Similarly, \eqref{point-mass-2} means that for any given Caratheodory function  $h((\omega; t, x), \xi)$ on $\Theta\times \R$, one has 
       \begin{align*}
           \int_{\Omega} \int_{\Pi_T} \Big[\int_{\xi} h((\omega; t, x), \xi) \nu(\omega; t, x)(\,d\xi)\Big]\, dx\, dt\, dP(\omega)=  E \int_{\Pi_T} h((\omega; t, x), \bar{u}(\omega; t, x)) \, dx\, dt.
       \end{align*}
 \end{rem}
  The proof of Lemma \ref{lem:singular_support} requires the application of a stochastic version of div-curl lemma, and \cite[Theorem A.2]{nualart:2008} is such a version. Let us also mention that we find the proof of \cite[Theorem A.2]{nualart:2008} to be absolutely flawless and will be using it here too. The next lemma is an important technical step to prove  Lemma \ref{lem:singular_support}.
\begin{lem}\label{div-curl}
Let $(\Phi_i,\Psi_i), i=1,2$  be two choices of entropy flux pairs, where
$\Phi_i$'s  have  at most polynomial growth (therefore $\Psi_i$ will have
at most polynomial growth as well).
 For every deterministic $\psi\in C_c^{\infty}(\Pi_T) $,
\begin{align}
 \langle \psi,\overline{\Psi_1\Phi_2}-\overline{\Phi_1\Psi_2}\rangle\overset{\text{D}}
{=}\langle
\psi,\overline{\Psi_1}\cdot \overline{\Phi_2}-\overline{\Phi_1}\cdot\overline{\Psi_2}\rangle \label{eq: weakcon_1}
\end{align}
\end{lem}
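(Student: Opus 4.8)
The plan is to read \eqref{eq: weakcon_1} as the conclusion of a stochastic compensated compactness argument applied to the viscous family $\{u_\eps\}$, and to reduce it to the stochastic div--curl lemma \cite[Theorem A.2]{nualart:2008}. Since $d=1$, the space--time domain $\Pi_T$ lives in $\R^2$, so the relevant pairing is exactly the classical two-dimensional div--curl pairing. For $i=1,2$ I would introduce the space--time fields
\[
\mathbf{P}^i_\eps = \big(\Phi_i(u_\eps),\,\Psi_i(u_\eps)\big),\qquad \mathbf{Q}^i_\eps = \big(-\Psi_i(u_\eps),\,\Phi_i(u_\eps)\big),
\]
so that the scalar product appearing on the left of \eqref{eq: weakcon_1} is precisely $\mathbf{P}^1_\eps\cdot\mathbf{Q}^2_\eps = \Psi_1(u_\eps)\Phi_2(u_\eps)-\Phi_1(u_\eps)\Psi_2(u_\eps)$. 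The uniform moment bound \eqref{uni:moment-esti}, together with the polynomial growth of $\Phi_i,\Psi_i$, guarantees that these fields are bounded in $L^2_{\loc}(\Pi_T)$ uniformly in $\eps$ (in expectation); along the subsequence fixed in Lemma \ref{conv-young-measure} each component converges weakly to its Young-measure average $\overline{\Phi_i},\overline{\Psi_i}$, and the products converge to $\overline{\Phi_1\Psi_2},\overline{\Psi_1\Phi_2}$, etc.

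The decisive input is the structure of $\Div_{t,x}\mathbf{P}^i_\eps$ and $\mathrm{curl}_{t,x}\mathbf{Q}^i_\eps$. Applying the It\^o formula to $\Phi_i(u_\eps)$ along \eqref{eq:stochconservation_laws-viscous} yields
\[
\partial_t\Phi_i(u_\eps)+\partial_x\Psi_i(u_\eps)=\underbrace{\tfrac12\sigma_\eps^2\Phi_i''(u_\eps)-\eps\Phi_i''(u_\eps)|\partial_x u_\eps|^2}_{\text{bounded in }L^1}+\underbrace{\eps\,\partial_{xx}\Phi_i(u_\eps)}_{\to 0\text{ in }H^{-1}_{\loc}}+\underbrace{\sigma_\eps\Phi_i'(u_\eps)\,\dot W}_{\text{martingale part}},
\]
and $\mathrm{curl}_{t,x}\mathbf{Q}^i_\eps=\partial_t\Phi_i(u_\eps)+\partial_x\Psi_i(u_\eps)$ produces the same right-hand side. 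I would then dispose of each piece: the two $L^1$ terms are bounded in the space of measures by \eqref{uni:moment-esti}, \eqref{gradient-esti} and Assumption \ref{A3}; the viscous term tends to $0$ in $H^{-1}_{\loc}$ because $\eps\,\partial_x\Phi_i(u_\eps)=\eps\Phi_i'(u_\eps)\partial_x u_\eps$ has squared $L^2$-norm $\eps^2\int|\Phi_i'(u_\eps)|^2|\partial_x u_\eps|^2=\eps\big(\eps\int|\Phi_i'(u_\eps)|^2|\partial_x u_\eps|^2\big)\to 0$ by \eqref{gradient-esti}. By Murat's lemma the deterministic part of the divergence and of the curl is therefore precompact in $H^{-1}_{\loc}$, leaving only the stochastic integral term to be absorbed.

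Finally I would invoke \cite[Theorem A.2]{nualart:2008}, whose purpose is exactly to carry out the div--curl argument in the presence of the martingale contribution $\sigma_\eps\Phi_i'(u_\eps)\,dW$: it gives that the weak space--time limit of $\mathbf{P}^1_\eps\cdot\mathbf{Q}^2_\eps$ coincides, in distribution, with $\overline{\mathbf{P}^1}\cdot\overline{\mathbf{Q}^2}=\overline{\Psi_1}\,\overline{\Phi_2}-\overline{\Phi_1}\,\overline{\Psi_2}$. Matching this against the Young-measure identification of the left-hand limit as $\overline{\Psi_1\Phi_2}-\overline{\Phi_1\Psi_2}$ yields \eqref{eq: weakcon_1}, the equality holding in law ($\overset{D}{=}$) because both the narrow compactness of the Young measures and the div--curl convergence are obtained only along a subsequence and in distribution. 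The main obstacle is precisely this martingale term: classical compensated compactness has no mechanism for a time derivative that is a stochastic integral, and the real work is to verify that our terms satisfy the hypotheses of \cite[Theorem A.2]{nualart:2008}---the uniform second moments, the $L^1$/measure bounds above, and the $H^{-1}_{\loc}$ vanishing of the viscous term---which both makes the cited stochastic div--curl lemma applicable and explains why the conclusion must be stated up to equality in distribution rather than almost surely.
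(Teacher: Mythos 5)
Your overall route is the paper's own: the same vector fields $G_\eps=(\Phi_1(u_\eps),\Psi_1(u_\eps))$ and $H_\eps=(-\Psi_2(u_\eps),\Phi_2(u_\eps))$, stochastic boundedness in $L^2(\Pi_T;\R^2)$ from \eqref{uni:moment-esti}, tightness/compactness of $\grad\cdot G_\eps$ and $\grad\times H_\eps$ in $H^{-1}(\Pi_T)$, and then the stochastic div--curl lemma \cite[Theorem A.2]{nualart:2008} to produce the distributional identity. Where the paper simply cites \cite[Lemma 4.18]{nualart:2008} for the $H^{-1}$-tightness of $\partial_t\Phi_i(u_\eps)+\partial_x\Psi_i(u_\eps)$, you verify it by hand (It\^{o} expansion, measure bounds on the $\Phi_i''$-terms via \eqref{uni:moment-esti} and \eqref{gradient-esti}, the viscous term vanishing in $H^{-1}_{\loc}$, Murat's lemma); this is a legitimate, more self-contained expansion of that citation, modulo the harmless point that the It\^{o} computation produces the entropy flux associated with $F_\eps$ rather than $F$, a discrepancy of order $\mathcal{O}(\eps)$ by \eqref{eq:purturb-approx}.

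There is, however, one genuine gap, and it sits on the left-hand side. You assert that ``the products converge to $\overline{\Psi_1\Phi_2}$, $\overline{\Phi_1\Psi_2}$,'' but Lemma \ref{conv-young-measure} delivers convergence only of \emph{expectations} $E\int h(\theta,u_{\eps_n})\,d\mu$ for \emph{predictable} Caratheodory integrands $h$. Testing $X_{\eps_n}=\langle\psi,(\Psi_1\Phi_2-\Phi_1\Psi_2)(u_{\eps_n})\rangle$ against a deterministic $\psi$ therefore identifies only $\lim E[X_{\eps_n}]$, which does not determine the law of any limit of $X_{\eps_n}$; yet your concluding step matches two \emph{distributional} limits of the same sequence, which requires knowing that $X_{\eps_n}$ converges in law to the specific random variable $\langle\psi,\overline{\Psi_1\Phi_2}-\overline{\Phi_1\Psi_2}\rangle$. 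The paper devotes the first half of its proof precisely to this point: for an arbitrary $B\in\mathcal{F}_T$ it writes ${\bf 1}_B=Z_T$ with $Z_t=E[{\bf 1}_B\,|\,\mathcal{F}_t]$ by the martingale representation theorem, so that the weight $Z_t\,\psi(t,x)$ \emph{is} predictable, Lemma \ref{conv-young-measure} applies, and one obtains $E[{\bf 1}_B X_{\eps_n}]\to E[{\bf 1}_B X]$ for every $B\in\mathcal{F}_T$ (the chain culminating in \eqref{eq:martingale-representation}), from which the identification of the limit of the product term is extracted. Without this conditioning device, or some substitute that enlarges the admissible test functionals beyond predictable ones, the phrase ``matching this against the Young-measure identification of the left-hand limit'' is unsupported: narrow convergence of Young measures controls expectations, not laws. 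The repair is short---insert the ${\bf 1}_B$/martingale-representation argument---but it is a needed idea rather than a routine omission, and it is the only substantive point where your proposal falls short of the paper's proof.
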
 

\begin{proof}
 Let $\psi \in C_c^{\infty}(\Pi_T) $ and $B \in \mathcal{F}_T$. Define 
 \begin{align}
  X_\eps(\omega):=\int_{\Pi_T} \psi(t,x)\Big(\Psi_1(u_\eps(t,x))\Phi_2(u_\eps(t,x))-\Phi_1(u_\eps(t,x))\Psi_2(u_\eps(t,x))\Big)\,dx\,dt.
 \end{align} Note that, by martingale representation theorem, there exists a continuous martingale $Z_t$ such that $Z_T = {\bf 1}_{B}$. 
 Then
\begin{align}
& \lim_{\eps_n\goto 0^+} E \big[{\bf1}_{B}(\omega) X_{\eps_n}(\omega)\Big] \notag \\
& = \lim_{\eps_n\goto 0^+} \int_{\Pi_T}E\Big[ E\big [{\bf1}_{B}(\omega)| \mathcal{F}_t\big]
\psi(t,x) \big(\Psi_1(u_{\eps_n}(t,x))\Phi_2(u_{\eps_n}(t,x))-\Phi_1(u_{\eps_n}(t,x))\Psi_2(u_{\eps_n}(t,x))\big)\Big]\,dx\,dt \notag\\
& = \lim_{\eps_n\goto 0^+} \int_{\Pi_T}E\Big[ Z_t\,
\psi(t,x) \big(\Psi_1(u_{\eps_n}(t,x))\Phi_2(u_{\eps_n}(t,x))-\Phi_1(u_{\eps_n}(t,x))\Psi_2(u_{\eps_n}(t,x))\big)\Big]\,dx\,dt\notag\\
&\big[\text{By Lemma \ref{conv-young-measure}}\big]\notag\\&=  \int_{\Pi_T} E\Big[ Z_t\,
\psi(t,x) \Big( \int_{u}\big(\Psi_1(u)\Phi_2(u)-\Phi_1(u)\Psi_2(u))\big)\nu(\omega; t, x)(du)\Big)\Big]\,dx\,dt\notag \\
&= \int_{\Omega} \int_{\Pi_T} {\bf1}_{B}(\omega)
\psi(t,x) \Big( \int_{u}\big(\Psi_1(u)\Phi_2(u)-\Phi_1(u)\Psi_2(u))\big)\nu(\omega; t,x)(du)\Big)\,dx\,dt\,dP(\omega)\notag \\
&= \int_{\Omega} \int_{\Pi_T} {\bf1}_{B}(\omega) \psi(t,x) \Big(\overline{\Psi_1\Phi_2}(\theta)
- \overline{\Phi_1\Psi_2}(\theta)\Big)\,dt\,dx \,dP(\omega) \notag\\
& = \int_{\Omega} {\bf1}_{B}(\omega) \langle \psi,\overline{\Psi_1\Phi_2} -\overline{\Phi_1\Psi_2}\rangle (\omega)\,dP(\omega) \notag \\
&\equiv \int_{\Omega} {\bf1}_{B}(\omega) X(\omega)\,dP(\omega) \label{eq:martingale-representation}
\end{align} where $X(\omega)=\langle \psi,\overline{\Psi_1\Phi_2} -\overline{\Phi_1\Psi_2}\rangle (\omega)$. This implies that
$X_{\eps_n} \overset{\text{a.s}} {\longrightarrow} X$ and hence $X_{\eps_n} \overset{\text{D}} {\rightarrow} X$. In other words
\begin{align}
 \lim_{\eps_n\goto 0^+} \int_{\Pi_T}
\psi(t,x)\Big(\Psi_1(u_{\eps_n}(t,x))\Phi_2(u_{\eps_n}(t,x))-\Phi_1(u_{\eps_n}(t,x))\Psi_2(u_{\eps_n}(t,x))\Big)\,dx\,dt
 \overset{\text{D}} {=}\langle\psi, \overline{\Psi_1\Phi_2} -\overline{\Phi_1\Psi_2}\rangle. \notag
\end{align}

Let $G_\eps(t,x)=(\Phi_1(u_\eps(t,x)),\Psi_1(u_\eps(t,x))) $ and $H_\eps(t,x)=(-\Psi_2(u_\eps(t,x)),\Phi_2(u_\eps(t,x))) $. 
By the moment estimate \eqref{uni:moment-esti},  we see that  the families $\{G_\eps\}_{\eps > 0}$ and $\{H_\eps\}_{\eps > 0}$ are stochastically bounded as $
L^2(\Pi_T;\R^2) $-valued random variables. 

We now call upon \cite[Lemma 4.18]{nualart:2008} and claim that $\{\partial_t\Phi_{\eps_n}^i +\partial_x\Psi_{\eps_n}^i \}_n$, where $\Phi_\eps^i= \Phi_i(u_\eps(\cdot,\cdot))$ and $\Psi_\eps^i= \Psi(u_\eps(\cdot,\cdot))$ and i=1, 2;  are tight sequences as $H^{-1}\Big(\Pi_T\Big)$-valued random variables.
 This means both $ \{\grad \cdot G_{\eps_n}  \}_n$ and $\{\grad \times H_{\eps_n} \}_n$ are tight as sequences of 
 $H^{-1}(\Pi_T) $-valued random variables. In view of Lemma \ref{conv-young-measure},
 we see that condition $(2)$ of the  div-curl lemma [ \cite{nualart:2008}, Theorem A.2 ] holds. 
 Therefore, one can apply  the div-curl lemma and have
\begin{align}
 \lim_{\eps_n\goto 0^+} \int_{\Pi_T}
\psi(t,x)\Big(\Psi_1(u_{\eps_n}(t,x))\Phi_2(u_{\eps_n}(t,x))-\Phi_1(u_{\eps_n}(t,x))\Psi_2(u_{\eps_n}(t,x))\Big)\,dx\,dt
 \overset{\text{D}} {=}\langle\psi,\overline{\Psi_1}.\overline{\Phi_2}-\overline{\Phi_1}.\overline{\Psi_2}\rangle.\notag
\end{align}
Thus, for  every deterministic $\psi\in C_c^{\infty}(\Pi_T) $,
\begin{align}
 \langle\psi,\overline{\Psi_1\Phi_2}-\overline{\Phi_1\Psi_2}\rangle\overset{\text{D}}
{=}\langle\psi,\overline{\Psi_1}.\overline{\Phi_2}-\overline{\Phi_1} .\overline{\Psi_2}\rangle.\notag 
\end{align}

\end{proof}

\begin{proof}[Proof of Lemma \ref{eq:entropy-moment-estimate}] 
 Let $\phi \in C_c^{\infty}(\Pi_T)$ be nonnegative deterministic test function. Choose $\Phi_1(u) = u$ and $\Psi(u)= F(u)\equiv \Psi_1(u)$.  Then $\Psi_2(u) = \int_0^u \big(F^{\prime}(r)\big)^2 \, dr$. Now apply Lemma \ref{div-curl} and arrive at \
 \begin{align}
 \langle \psi,\overline{F^2}-\overline{u\Psi_2}\rangle\overset{\text{D}} {=}\langle
\psi,(\bar{F})^2-\bar{u}.\overline{\Psi_2}\rangle. \label{eq: weakcon_2}
\end{align} Note that, by Schwartz inequality , for any $u \in \R$
\begin{align}
 \Big(F(u)-F(\bar{u}(\theta))\Big)^2 &= \Big(\int_{\bar{u}(\theta)}^{u} F'(v)\,dv\Big)^2
 \leq ( u-\bar{u}(\theta))\Big(\Psi_2(u)-\Psi_2(\bar{u}(\theta))\Big). \label{schwarz1}
\end{align} Integrating the inequality \eqref{schwarz1} against $\nu(\theta)(\, du)$, we have 
\begin{align}
\label{eq: weakcon_2.1}\overline{F^2}(\theta)-2\bar{F}(\theta)F(\bar{u}(\theta)) +( F(\bar{u}(\theta)))^2 \leq
\overline{u\Psi_2}(\theta)-\bar{u}(\theta).\overline{\Psi_2}(\theta). 
\end{align} We now multiply \eqref{eq: weakcon_2.1} by $\psi(t,x)$ and integrate against $\mu(\, d\theta)$ (i.e $\,dx \,dt \, dP(\omega)$) and obtain 
\begin{align*}
 \int_{\Pi_T} \psi(t,x) E[(\bar{F}-F(\bar{u}))^2]\,dt\,dx \leq \int_{\Pi_T}
\psi(t,x) E[(\overline{u\Psi_2}-\bar{u}.\overline{\Psi_2})-(\overline{F^2}-(\bar{F})^2)]\,dt\,dx = 0 \quad\text{by~ \eqref{eq: weakcon_2}}.
\end{align*} In other words 
 \begin{align}
	 \int_{\Pi_T} \psi(t,x) E[(\overline{F}-F(\bar{u}))^2]\,dt\,dx = 0,
 \end{align} which implies 
 
 \begin{align}
\label{eq:point-4-1}\int_{\Pi_T} \psi(t,x) \overline{F}(\omega; t,x) \,dx\, dt = \int_{\Pi_T} \psi(t,x) F(\bar{u}(\omega; t,x))\,dx\,dt\qquad \text{almost surely}.
 \end{align} In view of \eqref{eq:point-4-1} and \eqref{eq: weakcon_2},  one has
 
 \begin{align}
 & \int_{\Pi_T}\psi(t,x) E\Big[ \int_{u\in\R}
\Big(F(u)-F(\bar{u}(\omega; t,x))\Big)^2 \nu(\omega; t,x)(du)\Big] \,dt\,dx \notag\\
&= \int_{\Pi_T}\psi(t,x) E[\overline{F^2}-(\bar{F})^2]\,dt\,dx \notag \\
&=\int_{\Pi_T}\psi(t,x) E[(\overline{u\Psi_2}-\bar{u}.\overline{\Psi_2})]\,dt\,dx \notag \\
&= \int_{\Pi_T}\psi(t,x) E\Big[\int_{u\in\R}
\Big((u-\bar{u}(\omega; t,x))(\Psi_2(u)-\Psi_2(\bar{u}(\omega; t,x)))\Big) \nu(\omega; t,x)(du)\Big]\,dt\,dx. \notag
\end{align} We now invoke \eqref{schwarz1} and arbitrariness of $\psi$ to conclude that for $\mu$-almost all $\theta \in \Theta$ and every $u\in \text{support}(\nu(\theta))$, we must have 
 \begin{align}
\Big(F(u)-F(\bar{u}(\theta))\Big)^2=(u-\bar{u}(\theta))\Big(\Psi_2(u)-\Psi_2(\bar{u}(\theta))\Big).\label{schwarz2}
\end{align} To this end we recall the condition for equality in Schwartz inequality and conclude that \eqref{schwarz2} is
possible only if $F^{\prime}$ is constant between $u$ and $\bar{u}(\theta)$. This is an impossibility if $u$ is different
from $\bar{u}(\theta)$, thanks to \ref{A4}. Therefore $\nu(\theta)$ is a probability measure on $\R$ which is supported at 
the point $\bar{u}(\theta)$ for $\mu$-almost every $\theta \in \Theta$. In other words, \eqref{point-mass-2} holds.
 
\end{proof}

\subsection{Existence of entropy solution.} In view of the results and analysis above, it is now routine to show that
$\bar{u}(\omega; t,x)$ satisfies the stochastic entropy condition. From here onwards, will simply drop $\omega$ and write
$\bar{u}(t,x)$ in place of $\bar{u}(\omega; t,x)$.  We begin by fixing a non negative test function
$ \psi\in C_c^\infty([0, \infty)\times \R)$,  $B\in \mathcal{F}_T$ and
 convex entropy pair $(\beta,\zeta)$.  

Assume that $\zeta_\eps$ be the entropy flux with flux function $F_\epsilon$  which would approximate $\zeta$.  Now apply It\^{o}'s formula on \eqref{eq:stochconservation_laws-viscous} followed by It\^{o} product rule (as in \eqref{eq:entropy_derivation}) and then multiply by $\psi(t,x) {\bf 1}_{B}$ and integrate to obtain
 \begin{align}
0\le &E\Big[{\bf 1}_{B}\int_{\R} \beta(u_0^{\eps_n}(x))\psi(0,x)\,dx\Big]
-{\eps} E\Big[{\bf 1}_{B}\int_{\Pi_T}\beta^\prime(u_{\eps_n}(t,x))\grad u_{\eps_n}(t,x)\cdot \grad \psi(t,x)\,dx\,dt\Big] \notag \\
&+E\Big[{\bf 1}_{B} \int_{\Pi_T} \Big(\beta(u_{\eps_n}(t,x))\partial_t \psi(t,x)+\zeta_{\eps_n}(u_{\eps}(t,x))\cdot\grad\psi(t,x)\Big) \,dt\,dx\Big] \notag\\
& + E\Big[{\bf 1}_{B}\int_{\Pi_T} \sigma_{\eps_n}(x,u_{\eps_n}(t,x)) \beta^\prime(u_{\eps_n}(t,x))\psi(t,x)\,dx\,dW(t)\Big]\notag\\
& + \frac{1}{2} E\Big[{\bf 1}_{B}\int_{\Pi_T} \sigma^2_{\eps_n}(x,u_{\eps_n}(t,x)) \beta^{\prime \prime}(u_{\eps_n}(t,x))\psi(t,x)\,dx\,dt\Big]
\label{viscous-measure-inequality}
\end{align} 

With the help of uniform moment estimates and \eqref{eq:purturb-approx}; \eqref{viscous-measure-inequality} gives 

   \begin{align}
0\le &E\Big[{\bf 1}_{B}\int_{\R} \beta(u_0^{\eps_n}(x))\psi(0,x)\,dx\Big]
-{\eps} E\Big[{\bf 1}_{B}\int_{\Pi_T}\beta^\prime(u_{\eps_n}(t,x))\grad u_{\eps_n}(t,x) \cdot \grad \psi(t,x)\,dx\,dt\Big] \notag \\
&+E\Big[{\bf 1}_{B} \int_{\Pi_T} \Big(\beta(u_{\eps_n}(t,x))\partial_t \psi(t,x)+\zeta(u_{\eps_n}(t,x)) \cdot\grad\psi(t,x)\Big) \,dt\,dx\Big] \notag\\
& + E\Big[{\bf 1}_{B}\int_{\Pi_T} \sigma(x,u_{\eps_n}(t,x)) \beta^\prime(u_{\eps_n}(t,x))\psi(t,x)\,dx\,dW(t)\Big]\notag\\
& + \frac{1}{2} E\Big[{\bf 1}_{B}\int_{\Pi_T} \sigma^2(x,u_{\eps_n}(t,x)) \beta^{\prime \prime}(u_{\eps_n}(t,x))\psi(t,x)\,dx\,dt\Big]
+\mathcal{O}(\eps_n)\label{viscous-measure-inequality-2}
\end{align} 

  All that is left now is to justify passage to the limit $\epsilon_n\rightarrow 0$ in \eqref{viscous-measure-inequality-2}. In view of the estimate \eqref{gradient-esti}, it holds that 
\begin{align}
 \label{eq:passage-1} \lim_{\eps_n \rightarrow 0 } {\eps_n} E\Big[{\bf 1}_{B}\int_{\Pi_T}\beta^\prime(u_{\eps_n}(t,x))\grad u_{\eps_n}(t,x)\cdot\grad \psi(t,x)\,dx\,dt\Big] = 0.
\end{align} Furthermore, it follows from straightforward computation that

\begin{align}
 \label{eq:passage-2} \lim_{\eps_n \rightarrow 0 } E\Big[{\bf 1}_{B}\int_{\R} \beta(u_0^{\eps_n}(x))\psi(0,x)\,dx\Big]
 = E\Big[{\bf 1}_{B}\int_{\R} \beta(u_0(x))\psi(0,x)\,dx\Big]. 
\end{align} Note that  ${\bf 1}_{B}(\omega)$ may not be $\Sigma$-measurable, but we can adapt  the technique as in the derivation of \eqref{eq:martingale-representation} in the proof of Lemma \ref{div-curl} and apply Lemmas \ref{conv-young-measure} and \ref{lem:singular_support} to have
\begin{align}
& \lim_{\eps_n \rightarrow 0 }E\Big[{\bf 1}_{B} \int_{\Pi_T} \Big(\beta(u_{\eps_n}(t,x))\partial_t \psi(t,x)+\zeta(u_{\eps_n}(t,x)) \cdot\grad\psi(t,x)\Big) \,dt\,dx\Big] \notag \\
  =& E\Big[{\bf 1}_{B} \int_{\Pi_T} \Big(\beta( \bar{u}(t,x))\partial_t \psi(t,x)+\zeta( \bar{u}(t,x)) \cdot \grad\psi(t,x)\Big)\,dt\,dx\Big] \label{eq:passage-3}
\end{align} and 

\begin{align}
& \lim_{\eps_n\rightarrow 0 } \frac{1}{2} E\Big[{\bf 1}_{B}\int_{\Pi_T} \sigma^2(x,u_{\eps_n}(t,x)) \beta^{\prime \prime}(u_{\eps_n}(t,x))\psi(t,x)\,dx\,dt\Big] \notag \\
= &\frac{1}{2} E\Big[{\bf 1}_{B}\int_{\Pi_T} \sigma^2(x, \bar{u}(t,x)) \beta^{\prime \prime}(\bar{u}(t,x))\psi(t,x)\,dx\,dt\Big]. \label{eq:passage-4}
\end{align}
 Now passage to the limit in the martingale term requires some additional reasoning.  Let $\Gamma = \Omega\times [0,T] $, $\mathcal{G}= \mathcal{P}_T$ and
 $\varsigma = P\otimes \lambda_t $. 
The space   $L^2\big((\Gamma, \mathcal{G}, \varsigma); \R\big)$ represents the space of square integrable predictable integrands
for It\^{o} integrals with respect to $W(t)$.  Moreover, by It\^{o} isometry and martingale representation theorem,
it follows that It\^{o} integral defines isometry between two Hilbert spaces $L^2\big((\Gamma, \mathcal{G}, \varsigma); \R\big)$  and $L^2\big((\Omega, \mathcal{F}_T); \R\big)$. In other words, if $\mathcal{I}$ denotes the It\^{o} integral operator and $\{X_n\}_n$ be sequence in $L^2\big((\Gamma, \mathcal{G}, \varsigma); \R\big)$ weakly converging to $X$; then $\mathcal{I}(X_n)$ will converge weakly to $\mathcal{I}(X)$ in $L^2\big((\Omega, \mathcal{F}_T); \R\big)$.

 We again apply  Lemmas \ref{conv-young-measure} and \ref{lem:singular_support}  and conclude that for any  $h(t) \in L^2\big((\Gamma, \mathcal{G}, \varsigma); \R\big) $

   \begin{align*}
  & \lim_{\eps_n \rightarrow 0}E\Big[ \int_0^T \int_{\R} \sigma(x,u_{\eps_n}(t,x)) \beta^\prime(u_{\eps_n}(t,x)) h(t)\psi(t,x)\,dx\,dt \Big] \notag \\
  =&  E \Big[ \int_0^T \int_{\R} \sigma(x, \bar{u}(t,x)) \beta^\prime(\bar{u}(t,x)) h(t)\psi(t,x)\,dx\,dt. \Big]\notag
 \end{align*}  Hence, if we denote $X_n = \int_{\R} \sigma(x, u_{\eps_n}(t,x)) \beta^\prime(u_{\eps_n}(t,x)) \psi(t,x)\,dx $ and\newline $ X =  \int_{\R} \sigma(x, \bar{u}(t,x)) \beta^\prime(\bar{u}(t,x)) h(t)\psi(t,x)\,dx$, then $X_n$ converges weakly to $X$ in $L^2\big((\Gamma, \mathcal{G}, \varsigma); \R\big) $. Therefore $\mathcal{I}(X_n)$ will converge weakly to $\mathcal{I}(X)$ in $L^2\big((\Omega, \mathcal{F}_T); \R\big)$. In other words, the following lemma holds. 

\begin{lem}\label{lem:martingale} For every $B\in \mathcal{F}_T$
\begin{align*}
  &  \lim_{\eps_n \rightarrow 0} E\Big[{\bf 1}_{B} \int_0^T \int_{\R}  \sigma(x,u_{\eps_n}(t,x)) \beta^\prime(u_{\eps_n}(t,x))\psi(t,x)\,dx\,dW(t) \Big]\notag \\
  =&  E \Big[{\bf 1}_{B} \int_0^T \int_{\R} \sigma(x, \bar{u}(t,x)) \beta^\prime(\bar{u}(t,x))\psi(t,x)\,dx\,dW(t) \Big]
\end{align*} 
\end{lem}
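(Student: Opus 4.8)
The plan is to exploit that the Itô integral is a bounded linear isometry, so that it converts weak convergence of predictable integrands into weak convergence of the associated stochastic integrals; testing the latter against ${\bf 1}_B$ then produces the claim. Thus I never need pathwise or strong convergence of the integrands, only weak convergence, which is exactly what the Young measure machinery supplies.

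First I would record that the integrands
\[ X_n(\omega,t)=\int_{\R}\sigma(x,u_{\eps_n}(t,x))\,\beta^\prime(u_{\eps_n}(t,x))\,\psi(t,x)\,dx,\qquad X(\omega,t)=\int_{\R}\sigma(x,\bar u(t,x))\,\beta^\prime(\bar u(t,x))\,\psi(t,x)\,dx \]
belong to $L^2\big((\Gamma,\mathcal{G},\varsigma);\R\big)$. Indeed, $\beta^\prime$ is bounded, $\psi(t,\cdot)$ is supported in a fixed ball $B_M$, and $|\sigma(x,u)|\le g(x)(1+|u|)$ with $g\in L^\infty\cap L^2$; Cauchy--Schwarz then gives $|X_n(t)|\le C_\psi\,\|g\|_{L^2}\big(1+\|u_{\eps_n}(t,\cdot)\|_{L^2(B_M)}\big)$, and the uniform moment estimate \eqref{uni:moment-esti} with $p=2$ bounds $\sup_n E\int_0^T|X_n|^2\,dt$. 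The same bound for $X$ follows from Lemma \ref{eq:entropy-moment-estimate}. These linear-growth and compact-support features also provide the uniform integrability required in the next step.

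The heart of the matter is the weak convergence $X_n\rightharpoonup X$ in $L^2\big((\Gamma,\mathcal{G},\varsigma);\R\big)$. I would fix an arbitrary $h\in L^2\big((\Gamma,\mathcal{G},\varsigma);\R\big)$ and set $H(\omega,t,x,\xi)=h(\omega,t)\psi(t,x)\sigma(x,\xi)\beta^\prime(\xi)$, a Caratheodory function on $\Theta\times\R$ whose support lies in $\Theta_M\times\R$. By the growth bounds above and \eqref{uni:moment-esti}, $\{H(\theta,u_{\eps_n}(\theta))\}_n$ is uniformly integrable, so Lemma \ref{conv-young-measure} applies and expresses the limit as an average against the Young measure $\nu$; invoking the point-mass identification \eqref{point-mass-2} of Lemma \ref{lem:singular_support}, namely $\nu(\theta)=\delta_{\bar u(\theta)}$, the average collapses to evaluation at $\bar u$, whence
\[ \lim_{n\goto\infty} E\int_0^T X_n(t)\,h(t)\,dt = E\int_0^T X(t)\,h(t)\,dt. \]
As $h$ was arbitrary, this is precisely $X_n\rightharpoonup X$. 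Finally, since the Itô operator $\mathcal{I}:L^2\big((\Gamma,\mathcal{G},\varsigma);\R\big)\to L^2\big((\Omega,\mathcal{F}_T);\R\big)$ is bounded and linear (an isometry, by Itô isometry and martingale representation), it is weak-to-weak continuous, so $\mathcal{I}(X_n)\rightharpoonup\mathcal{I}(X)$. Because $B\in\mathcal{F}_T$, the indicator ${\bf 1}_B$ lies in $L^2\big((\Omega,\mathcal{F}_T);\R\big)$, and testing the weak convergence against it yields $\lim_n E[{\bf 1}_B\,\mathcal{I}(X_n)]=E[{\bf 1}_B\,\mathcal{I}(X)]$, which is the asserted identity.

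I expect the main obstacle to be the weak-convergence step rather than the passage through $\mathcal{I}$: one must check that the composite $H$ is a genuine Caratheodory integrand with spatially compact support and that $\{H(\theta,u_{\eps_n})\}$ is uniformly integrable, so that Lemma \ref{conv-young-measure} is legitimately applicable, and then crucially use the Dirac-mass reduction \eqref{point-mass-2} to replace the Young-measure average by evaluation at $\bar u$. Once the integrands converge weakly, transferring the convergence across the stochastic integral is automatic from boundedness and linearity of $\mathcal{I}$ — precisely the feature that allows us to dispense with any strong-in-time compactness.
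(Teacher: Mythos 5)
Your proposal is correct and takes essentially the same route as the paper: you establish weak convergence of the integrands $X_n \rightharpoonup X$ in $L^2\big((\Gamma,\mathcal{G},\varsigma);\R\big)$ by testing against an arbitrary $h$ via Lemma \ref{conv-young-measure} and the Dirac-mass identification \eqref{point-mass-2}, and then transfer this through the It\^{o} isometry's weak-to-weak continuity, finally pairing with ${\bf 1}_B$. The only difference is that you make explicit the verifications (Caratheodory property of the composite integrand, compact spatial support, uniform integrability from \eqref{uni:moment-esti}) that the paper leaves implicit.
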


 Now simply combine \eqref{eq:passage-1}-\eqref{eq:passage-4} along with Lemma \ref{lem:martingale} and pass to the limit $\eps_n \downarrow 0$ in 
\eqref{viscous-measure-inequality-2} and obtain

\begin{align}
0\le &E\Big[{\bf 1}_{B}\int_{\R} \beta(u_0 (x))\psi(0,x)\,dx\Big]+ E \Big[{\bf 1}_{B} \int_{\Pi_T} \beta( \bar{u}(t,x))\partial_t \psi(t,x)\,dt\,dx\Big]\notag \\
 + &  E \Big[{\bf 1}_{B} \int_{\Pi_T}\zeta( \bar{u}(t,x)) \cdot \grad\psi(t,x)\,dt\,dx\Big] 
   + \frac{1}{2} E\Big[{\bf 1}_{B}\int_{\Pi_T} \sigma^2(x, \bar{u}(t,x)) \beta^{\prime \prime}(\bar{u}(t,x))\psi(t,x)\,dx\,dt\Big]\notag \\
 &+ E \Big[{\bf 1}_{B} \int_0^T \int_{\R} \sigma(x, \bar{u}(t,x)) \beta^\prime(\bar{u}(t,x))\psi(t,x)\,dx\,dW(t) \Big] 
\label{viscous-measure_1-inequality}
\end{align} 

Finally, we now combine the  results  above and claim that $\bar{u}(t,x)$ is a stochastic entropy solution of \eqref{eq:brown_stochconservation_laws}.

\begin{lem}\label{lem:entropysolution}
 The function $\bar{u}(t,x)$ is an entropy solution of \eqref{eq:brown_stochconservation_laws}. 
\end{lem}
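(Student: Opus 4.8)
The plan is to read off the two requirements of Definition \ref{defi:stochentropsol} from the estimate \eqref{viscous-measure_1-inequality} together with the moment bound of Lemma \ref{eq:entropy-moment-estimate}. Condition $(1)$ is immediate: by construction $\bar{u}(\omega;t,x)=\int_{\xi}\xi\,\nu(\omega;t,x)(d\xi)$ is $\mathcal{P}_T\times\mathcal{L}(\R^d)$-measurable, hence an $L^2(\R^d)$-valued predictable process, and Lemma \ref{eq:entropy-moment-estimate} supplies exactly $\sup_{0\le t\le T}E[\|\bar{u}(t)\|_p^p]<\infty$ for every $p=2,3,\ldots$. So the substance of the lemma is condition $(2)$, the entropy inequality, which I would obtain from \eqref{viscous-measure_1-inequality} by upgrading an ``expectation-against-indicators'' statement to a pointwise one.

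To this end, fix a convex entropy pair $(\beta,\zeta)$ and a nonnegative $\psi\in C_c^\infty([0,\infty)\times\R)$, and denote by $\mathcal{E}_\psi(\omega)$ the sum of the five integral terms on the right-hand side of \eqref{viscous-measure_1-inequality} with the factor ${\bf 1}_{B}$ deleted. The crucial observation is that $\mathcal{E}_\psi$ is a genuine $\mathcal{F}_T$-measurable, integrable random variable: the four Lebesgue-type integrals are built from the adapted process $\bar{u}$ over $[0,T]$, while the It\^o term $\int_0^T\int_{\R}\sigma(x,\bar{u})\beta^\prime(\bar{u})\psi\,dx\,dW(t)$ is the terminal value of a square-integrable martingale and is therefore $\mathcal{F}_T$-measurable; integrability of all five pieces follows from the moment estimates of Lemma \ref{eq:entropy-moment-estimate} combined with the polynomial growth in \ref{A1} and the linear growth of $\sigma$ in \ref{A3}. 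In this notation \eqref{viscous-measure_1-inequality} reads $E[{\bf 1}_{B}\,\mathcal{E}_\psi]\ge 0$ for every $B\in\mathcal{F}_T$.

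I would then select the admissible set $B^\star=\{\omega:\mathcal{E}_\psi(\omega)<0\}$, which belongs to $\mathcal{F}_T$ precisely because $\mathcal{E}_\psi$ is $\mathcal{F}_T$-measurable. Taking $B=B^\star$ gives $0\le E[{\bf 1}_{B^\star}\mathcal{E}_\psi]=E[\mathcal{E}_\psi{\bf 1}_{\{\mathcal{E}_\psi<0\}}]\le 0$, so $E[\mathcal{E}_\psi{\bf 1}_{\{\mathcal{E}_\psi<0\}}]=0$ and hence $P(\mathcal{E}_\psi<0)=0$. Thus $\mathcal{E}_\psi\ge 0$ almost surely, which is exactly the entropy inequality of Definition \ref{defi:stochentropsol} for the fixed triple $(\beta,\zeta,\psi)$ with smooth $\psi$; since the definition asks only that the inequality hold almost surely for each fixed pair and test function, no simultaneity over $(\beta,\zeta,\psi)$ is needed.

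It remains to relax the smoothness of the test function. The inequality just proved is for $\psi\in C_c^\infty$, while the definition allows $\psi\in C_c^{1,2}([0,\infty)\times\R)$; given such a $\psi$, I would mollify to produce $\psi_k\in C_c^\infty$ with $\psi_k\to\psi$, $\partial_t\psi_k\to\partial_t\psi$ and $\partial_x\psi_k\to\partial_x\psi$ uniformly on a fixed compact set, apply $\mathcal{E}_{\psi_k}\ge 0$, and pass to the limit using the moment bounds for the Lebesgue terms and the It\^o isometry for the stochastic term. I expect the only genuinely delicate point to be the verification that $\mathcal{E}_\psi$ is a bona fide $\mathcal{F}_T$-measurable and integrable random variable—in particular the well-definedness and measurability of the stochastic integral—so that the choice $B^\star=\{\mathcal{E}_\psi<0\}$ is legitimate; once that is secured the rest is routine.
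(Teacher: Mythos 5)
Your proposal is correct and follows essentially the same route as the paper: the paper likewise takes predictability and the moment bounds from Lemma \ref{eq:entropy-moment-estimate} and deduces the almost-sure entropy inequality from the validity of \eqref{viscous-measure_1-inequality} for every $B\in\mathcal{F}_T$, with your explicit choice $B^\star=\{\mathcal{E}_\psi<0\}$ being precisely the mechanism the paper leaves implicit in the phrase ``since \eqref{viscous-measure_1-inequality} is satisfied for all $B\in\mathcal{F}_T$, we must have\dots''. Your added mollification step passing from $C_c^\infty$ to $C_c^{1,2}$ test functions is a detail the paper silently omits, and you handle it correctly.
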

\begin{proof} The predictability of $\bar{u}(t,x)$ and necessary moment estimates are derived in Lemma \ref{eq:entropy-moment-estimate}.  Since \eqref{viscous-measure_1-inequality} is satisfied for all $B\in \mathcal{F}_T$, we must have 

\begin{align}
 &\int_{\R} \beta(u_0 (x))\psi(0,x)\,dx+  \int_{\Pi_T} \beta( \bar{u}(t,x))\partial_t \psi(t,x)\,dt\,dx\notag \\
  & +  \int_{\Pi_T}\zeta( \bar{u}(t,x)) \cdot \grad\psi(t,x)\,dt\,dx
   + \frac{1}{2} \int_{\Pi_T} \sigma^2(x, \bar{u}(t,x)) \beta^{\prime \prime}(\bar{u}(t,x))\psi(t,x)\,dx\,dt\notag \\
 &+  \int_0^T \int_{\R} \sigma(x, \bar{u}(t,x)) \beta^\prime(\bar{u}(t,x))\psi(t,x)\,dx\,dW(t)
  \ge 0\quad P- \text{a.s.}\notag 
\end{align} In other words, $\bar{u}$ satisfies the stochastic entropy condition.

\end{proof} 
 
 
 \section{existence of strong entropy solution}
  In this section we establish that
the vanishing viscosity limit $v(t,x) = \bar{u}(t,x)$ is indeed a strong entropy solution. To this end, let $\tilde{u}(t) = \tilde{u}(t, x)$ be an $\mathcal{F}_t$-predictable and $L^2(\R)$-valued process with 

\begin{align}
\sup_{0\le t\le T}  E \Big[||\tilde{u}(t)||_p^p \Big] < \infty, \quad \text{for all} \quad T > 0,~p=2, 4,... \label{eq:exist-strong-entropy}
\end{align}
Furthermore, let $\beta$ be a smooth convex function approximating the absolute value in $\R$ and
$\psi\in C_c^\infty([0, \infty)\times \R)$ be a nonnegative test function. For constants $\delta >0, ~\delta_0 >0$, define
\begin{align*}
    \phi_{\delta,\delta_0}(t,x,s,y) = \rho_{\delta_0}(t-s)\varrho_{\delta}(x-y) \psi(s,y).
\end{align*}
\begin{lem}\label{lem:strong-entropy-condition}
   For each $T > 0$, there exists a deterministic function $A(\delta, \delta_0)$ such that 
   
\begin{align*}
 &E\Big[\int_0^T \int_y\int_0^T\int_x \sigma(x,\tilde{u}(r,x))\beta^\prime(\tilde{u}(r,x)-v)
\phi_{\delta,\delta_0}(r,x,s,y)
 \,dx \,dW(r)\Big|_{v=v(s,y)}\,dy\,ds\Big] \notag\\
\le& - E\Big[\int_{\Pi_T}\int_{\Pi_T}  \sigma(x,\tilde{u}(r,x))\sigma(y,v(r,y))\beta^{\prime\prime}(\tilde{u}(r,x)-v(r,y))\notag \\
&\hspace{2cm}\times \phi_{\delta,\delta_0}(r,x,s,y)
 \,dr\,dx \,dy \,ds \Big] + A(\delta,\delta_0).
\end{align*}  
Furthermore, for fixed $\delta$, $\psi$ and $\beta$, the function $A(\delta,\delta_0)$ has the property that 
\begin{align*}
\lim_{\delta_0\rightarrow 0} A(\delta,\delta_0) = 0.
\end{align*}
\end{lem}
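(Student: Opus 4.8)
The plan is to exploit that $v=\bar u$ is the vanishing viscosity limit constructed above, so that although $v$ itself carries no explicit It\^o structure, every viscous approximant $u_{\eps_n}$ does, and the substitution $v\mapsto v(s,y)$ can be analysed by first freezing it as $u_{\eps_n}(s,y)$ and then letting $\eps_n\goto 0$. Accordingly, for fixed $s,y$ I would set $g_t(v):=\int_0^t h(r,s;v,y)\,dW(r)$, viewed as a function of the deterministic parameter $v$. Because $\mathrm{supp}\,\rho_{\delta_0}\subset[-\delta_0,0]$, the integrand $h(r,s;v,y)$ vanishes unless $r\in[s-\delta_0,s]$, so that $g_T(v)=g_s(v)$ while $g_{(s-\delta_0)\vee 0}(v)=0$ for every $v$. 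The object whose expectation must be controlled is then $\int_0^T\int_y g_s(u_{\eps_n}(s,y))\,dy\,ds$ in the limit $\eps_n\goto 0$.

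Next I would apply the generalized It\^o (It\^o--Wentzell) formula to the composition $Z(t):=g_t(u_{\eps_n}(t,y))$ on $[(s-\delta_0)\vee 0,\,s]$, using that for fixed $v$ the field $t\mapsto g_t(v)$ is a martingale with differential $h(t,s;v,y)\,dW(t)$, while $u_{\eps_n}(\cdot,y)$ solves \eqref{eq:stochconservation_laws-viscous}. Since $Z((s-\delta_0)\vee 0)=0$, taking expectation annihilates all pure martingale increments and leaves exactly three drift contributions: the cross-variation term $E\int\partial_v h(t,s;u_{\eps_n}(t,y),y)\,\sigma_{\eps_n}(y,u_{\eps_n}(t,y))\,dt$, the transport/viscous term $E\int\partial_v g_t(u_{\eps_n}(t,y))\,b_{\eps_n}(t,y)\,dt$ with $b_{\eps_n}=-\partial_y F_{\eps_n}(u_{\eps_n})+\eps_n\partial_{yy}u_{\eps_n}$, and the It\^o-correction term $\tfrac12 E\int\partial_v^2 g_t(u_{\eps_n}(t,y))\,\sigma_{\eps_n}^2(y,u_{\eps_n}(t,y))\,dt$, each $t$-integral running over $[s-\delta_0,s]$. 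Recalling $\partial_v h(t,s;v,y)=-\int_x\sigma(x,\tilde u(t,x))\beta^{\prime\prime}(\tilde u(t,x)-v)\phi_{\delta,\delta_0}(t,x;s,y)\,dx$, the first of these is precisely the (negative) noise--noise term, up to replacing $\sigma_{\eps_n},u_{\eps_n}$ by $\sigma,v$.

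The passage $\eps_n\goto 0$ is then carried out with the Young-measure machinery. Writing $\theta=(\omega;t,y)$ and noting that the integrand of the cross-variation term is a Carath\'eodory function of $(\theta,u_{\eps_n}(\theta))$ with support in some $\Theta_M$ (by compactness of $\mathrm{supp}\,\psi$) and uniformly integrable (by \eqref{uni:moment-esti} and \ref{A3}), Lemmas \ref{conv-young-measure} and \ref{lem:singular_support} together with $\sigma_{\eps_n}\goto\sigma$ from \eqref{eq:purturb-approx} convert it, after integrating in $s,y$ and relabelling $t\rightsquigarrow r$, into $-E\int_{\Pi_T}\int_{\Pi_T}\sigma(x,\tilde u(r,x))\sigma(y,v(r,y))\beta^{\prime\prime}(\tilde u(r,x)-v(r,y))\phi_{\delta,\delta_0}(r,x,s,y)\,dr\,dx\,dy\,ds$, which is the desired right-hand side. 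The same argument applied to the Carath\'eodory map $(\theta,\xi)\mapsto g_s(\xi)(\omega)$, which does not depend on $\eps_n$ since $g_s$ is built only from $\tilde u$, yields $E\int\int g_s(u_{\eps_n}(s,y))\,dy\,ds\goto E\int\int g_s(v(s,y))\,dy\,ds$, i.e.\ the left-hand side of the Lemma; the two remaining drift contributions are collected into $A(\delta,\delta_0)$.

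It remains to prove $A(\delta,\delta_0)\goto 0$ as $\delta_0\goto 0$, and this is the main obstacle. The decisive structural fact is that every correction integral is supported on $t\in[s-\delta_0,s]$, an interval of length $\delta_0$, while $\partial_v g_t(v)$ and $\partial_v^2 g_t(v)$ are It\^o integrals over that same short interval and hence of size $O(\sqrt{\delta_0})$ in $L^2$ (by It\^o isometry, as $\beta^{\prime\prime},\beta^{\prime\prime\prime}$ are bounded with compact support). Combined with Cauchy--Schwarz and the uniform moments \eqref{uni:moment-esti}, the It\^o-correction term and the first-order transport part $-\partial_y F_{\eps_n}(u_{\eps_n})$ are $O(\sqrt{\delta_0})$ uniformly in $n$. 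The genuinely delicate piece is the viscous drift $\eps_n\partial_{yy}u_{\eps_n}$, which is \emph{not} controlled by \eqref{gradient-esti} as it stands; here I would integrate by parts in $y$ (boundary terms vanishing by compact support of $\psi$ and $\varrho_\delta$), transferring one derivative to produce the term $-\eps_n\int_y\partial_v^2 g_t(u_{\eps_n})\,|\partial_y u_{\eps_n}|^2\,dy$ together with a strictly lower-order remainder. The factor $\eps_n|\partial_y u_{\eps_n}|^2$ is exactly what \eqref{gradient-esti} controls, and multiplication by the $O(\sqrt{\delta_0})$ quantity $\partial_v^2 g_t$ gives a bound $C(\delta)\sqrt{\delta_0}$ uniform in $n$. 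Letting $\eps_n\goto 0$ and then $\delta_0\goto 0$ yields $|A(\delta,\delta_0)|\le C(\delta)\sqrt{\delta_0}\goto 0$, the boundary cases $s\le\delta_0$ and $s$ near $T$ contributing only $O(\delta_0)$ and being absorbed into $A$. This completes the plan.
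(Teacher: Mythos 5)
Your overall architecture is essentially the paper's proof in disguise: the paper implements your It\^o--Wentzell step by hand, mollifying in $v$ with $\rho_l(u_\eps(s,y)-v)$ and using the lag identity \eqref{eq:conditional_indep} with $u_\eps(s-\delta_0,y)$ in place of your $Z((s-\delta_0)\vee 0)=0$; your cross-variation term is exactly the paper's $B^{\eps,l}$, your transport, viscous and It\^o-correction drifts are its $A_1^{l,\eps}$, $A_2^{l,\eps}$, $A_3^{l,\eps}$, and the $\eps_n$-limits are taken with the same Young-measure lemmas. The genuine gap lies in your treatment of $A(\delta,\delta_0)$, and it is quantitative, not cosmetic. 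The claim that $\partial_v g_t$ and $\partial_v^2 g_t$ are $O(\sqrt{\delta_0})$ in $L^2$ ``by It\^o isometry'' is false: the kernel contains $\rho_{\delta_0}(r-s)$ with $\|\rho_{\delta_0}\|_\infty\sim\delta_0^{-1}$, so the isometry gives $E|\partial_v g_t(v)|^2\lesssim\int_{s-\delta_0}^{s}\rho_{\delta_0}^2(r-s)\,dr\sim\delta_0^{-1}$; these fields \emph{blow up} as $\delta_0\to 0$. The smallness of the drift terms therefore cannot come from the size of the field; it comes from the length-$\delta_0$ time window multiplied against a bound on the field that is uniform in $(y,v)$ --- note that after the substitution $v=u_{\eps_n}(t,y)$ a fixed-$v$ isometry bound is useless in any case. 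This is precisely the paper's Lemma \ref{lem:L-infinity estimate}, a $W^{1,4}(\R\times\R)\hookrightarrow L^\infty$ embedding argument giving $E\|J\|_{L^\infty}^2\le C\delta_0^{-3/2}$, whence the net rate is $\delta_0\cdot\delta_0^{-3/4}=\delta_0^{1/4}$, not $\delta_0^{1/2}$; with your accounting the exponent comes out wrong (indeed negative) once the correct field size is inserted.

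Two further places where the plan fails as written. For the transport part you propose Cauchy--Schwarz plus the uniform moments \eqref{uni:moment-esti}; but $\partial_yF_{\eps_n}(u_{\eps_n})=F'_{\eps_n}(u_{\eps_n})\,\partial_yu_{\eps_n}$, and $\partial_yu_{\eps_n}$ has no uniform-in-$\eps_n$ moment bound (\eqref{gradient-esti} controls only $\eps_n|\partial_y u_{\eps_n}|^2$), so no direct estimate can close. One must remove $\partial_y u_{\eps_n}$ altogether via the antiderivative device $G_\eps(u,v)=\int_0^v\beta''(u-r)F'_\eps(r)\,dr$ and the associated field $X_\eps$, rewriting the integrand as a total $y$-derivative and dumping $\partial_y$ onto $\phi_{\delta,\delta_0}$, as in Lemma \ref{finding_A_1}; only then does the sup-norm estimate yield $\delta_0^{1/4}$. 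For the viscous part, pairing $\partial_v^2 g_t$ with $\eps_n|\partial_yu_{\eps_n}|^2$ is the right move, but \eqref{gradient-esti} gives $\varLambda_{\eps_n}(T)=O(1)$ in total, and the factor $\delta_0$ is produced by Fubini in $s$, which forces $\sup_{0\le s\le T}\|J[\beta''',\phi_{\delta,\delta_0}](s;\cdot,\cdot)\|_\infty$ \emph{inside} the expectation next to $\varLambda_{\eps_n}(T)$; the fixed-$s$ bound of Lemma \ref{lem:L-infinity estimate} no longer suffices, and the paper must invoke the modulus-of-continuity estimate \cite[Lemma 4.28]{nualart:2008} (see \eqref{eq:modulus_of_continuity}--\eqref{sup-estimate_3}) together with H\"older for $p>8$ to conclude $|I_2^\eps|\le C\delta_0^{\tilde a}$ (Lemma \ref{finding_A_2}). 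Without these two ingredients --- the Sobolev-embedding sup-norm estimate and the uniform-in-$s$ modulus bound --- your step $A(\delta,\delta_0)\to 0$ does not go through.
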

 A significant part of the proof is built on ideas borrowed from \cite{nualart:2008}, and the proof requires some preparation. Given a nonnegative test function $\phi\in C_c^\infty(\Pi_{\infty}\times \Pi_\infty)$ and $\beta \in C^\infty(\R)$ such that $\beta^\prime, 
\beta^{\prime\prime}\in C_b(\R)$, define

\begin{align*}
 J[\beta, \phi](s; y, v) := \int_0^T\int_x \sigma(x,\tilde{u}(r,x))\beta(\tilde{u}(r,x)-v) \phi(r,x,s,y)
 dx\,dW(r) 
\end{align*}
where  $0\le s\le T$, $(y,v)\in \R\times \R$. 

 Since the test function $\psi$ has compact support, there exists $c_\phi > 0$ such that $ J[\beta, \phi](s; y, v)= 0$ if $y > c_\phi$ and $0\le s\le T$ .
\begin{lem}\label{lem:differentiation} The following identities hold:
   \begin{align*}
    &\partial_v  J[\beta, \phi](s; y, v) =  J[-\beta^\prime, \phi](s; y, v)\\
    & \partial_{y}  J[\beta, \phi](s; y, v) =  J[\beta, \partial_{y}\phi](s; y, v).
   \end{align*}

\end{lem}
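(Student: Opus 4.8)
The plan is to establish both identities as statements of differentiability in $L^2(\Omega)$ of the parameter-dependent stochastic integral $J[\beta,\phi](s;y,v)$, justifying the interchange of the parameter derivative with the It\^o integral by means of the It\^o isometry. In each case I will form the relevant difference quotient, use linearity of the stochastic integral to bring the quotient inside the integral, and then show that the integrand of the quotient converges, uniformly enough in $(r,x)$, to the integrand of the candidate derivative.

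First I would treat the $v$-derivative. By linearity of the It\^o integral,
\[
\frac{J[\beta,\phi](s;y,v+h)-J[\beta,\phi](s;y,v)}{h}=\int_0^T\int_x \sigma(x,\tilde{u}(r,x))\,Q_h(r,x)\,\phi(r,x,s,y)\,dx\,dW(r),
\]
where $Q_h(r,x)=h^{-1}\big(\beta(\tilde{u}(r,x)-v-h)-\beta(\tilde{u}(r,x)-v)\big)$. The mean value theorem gives $Q_h(r,x)=-\beta'(\tilde{u}(r,x)-v-\theta h)$ for some $\theta\in(0,1)$, whence $\big|Q_h+\beta'(\tilde{u}-v)\big|\le\|\beta''\|_\infty|h|$ since $\beta''\in C_b(\R)$. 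The candidate limit $J[-\beta',\phi]$ is a bona fide square-integrable It\^o integral, and by the It\^o isometry the $L^2(\Omega)$-distance between the quotient and $J[-\beta',\phi]$ equals
\[
E\int_0^T\Big(\int_x \sigma(x,\tilde{u})\big(Q_h+\beta'(\tilde{u}-v)\big)\phi\,dx\Big)^2\,dr.
\]
Applying Cauchy--Schwarz over the compact spatial support of $\phi$, together with the growth bound \ref{A3} on $\sigma$, the pointwise estimate on $Q_h+\beta'(\tilde{u}-v)$, and the moment control \eqref{eq:exist-strong-entropy} on $\tilde{u}$, this is $O(h^2)$ and therefore tends to $0$ as $h\to 0$, which proves the first identity.

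The second identity is handled in the same spirit. The $y$-difference quotient equals
\[
\int_0^T\int_x \sigma(x,\tilde{u}(r,x))\,\beta(\tilde{u}(r,x)-v)\,R_h(r,x)\,dx\,dW(r),\qquad R_h=\frac{\phi(\cdot,y+h)-\phi(\cdot,y)}{h},
\]
and, since $\phi\in C_c^\infty$, the mean value theorem yields $|R_h-\partial_y\phi|\le\|\partial_{yy}\phi\|_\infty|h|$. Because $\beta'\in C_b(\R)$ forces $\beta$ to grow at most linearly, the product $\sigma(x,\tilde{u})\beta(\tilde{u}-v)$ is square-integrable by \ref{A3} and \eqref{eq:exist-strong-entropy}, so the It\^o isometry again produces an $O(h^2)$ bound for the $L^2(\Omega)$-distance between this quotient and $J[\beta,\partial_y\phi]$, giving $\partial_y J[\beta,\phi]=J[\beta,\partial_y\phi]$.

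The main obstacle I anticipate is precisely the rigorous interchange of the parameter derivative with the stochastic integral, which cannot be carried out pathwise. The device that makes it work is the It\^o isometry combined with the fact that, after subtracting the candidate derivative, the difference quotients are dominated by a bound that is $O(|h|)$ uniformly in $(r,x)$; the compact spatial support of $\phi$ and the $L^p$-moment bounds on $\tilde{u}$ then guarantee that all the resulting constants are finite, so the remainders genuinely vanish in $L^2(\Omega)$.
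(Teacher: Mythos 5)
Your proof is correct and takes exactly the route the paper has in mind: the paper's entire proof is the one-line remark that the identities follow as in the Leibniz integral rule, and your difference-quotient argument with the mean value theorem and the It\^{o} isometry (using $\beta',\beta''\in C_b(\R)$, the compact support of $\phi$, the growth bound \ref{A3} on $\sigma$, and the moment bound \eqref{eq:exist-strong-entropy}) is precisely the rigorous implementation of that remark. The only point worth noting is that you obtain the derivatives in the $L^2(\Omega)$ sense for each fixed $(y,v)$, which is the natural reading of the lemma and suffices for its later uses such as the integrations by parts in \eqref{l-4}, where one upgrades to an almost-sure weak derivative in $(y,v)$ via joint measurability and Fubini.
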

\begin{proof}
  The proof is similar to the that  of  Leibniz integral rule.
\end{proof}

\begin{lem} \label{lem:L-infinity estimate}
 Let $\beta\in C^\infty(\R)$ be function such that $\beta^\prime, \beta^{\prime\prime}\in C_c^\infty(\R) $.
 Then there exists a constant $C=C(\beta', \psi )$  such that
 \begin{align}
 \sup_{0\le s\le T}\Big( E|| J[\beta,\phi_{\delta,\delta_0}](s;\cdot,\cdot)||_{L^\infty(\R\times \R)}^2\Big) \le
  \frac{C(\beta', \psi )}{\delta_0^\frac{3}{2}}.\label{eq:l-infinity bound}
 \end{align}
\end{lem}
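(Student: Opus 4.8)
The plan is to control the supremum over the two parameters $(y,v)\in\R\times\R$ by Sobolev/Agmon-type inequalities and then estimate the resulting $L^2$-norms of $J[\beta,\phi_{\delta,\delta_0}]$ and of its $(y,v)$-derivatives through the It\^o isometry. The supremum in $y$ is benign: since $\psi$ has compact support, $J[\beta,\phi_{\delta,\delta_0}](s;y,v)$ vanishes for $|y|>c_\phi$, and I would control it by the one–dimensional Agmon inequality $\sup_y|f(y)|^2\le 2\,\norm{f}_{L^2_y}\,\norm{\partial_y f}_{L^2_y}$, where by Lemma \ref{lem:differentiation} the derivative $\partial_y J[\beta,\phi_{\delta,\delta_0}]=J[\beta,\partial_y\phi_{\delta,\delta_0}]$ merely differentiates the factor $\varrho_\delta(x-y)\psi(s,y)$, at the cost of a $\delta$-dependent but $\delta_0$-independent constant. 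The supremum in $v$ is the genuine difficulty and is treated first.

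The obstruction in the $v$-direction is that $\beta$ is only bounded (its derivatives are compactly supported, so $\beta$ is constant near $\pm\infty$), whence $J[\beta,\phi_{\delta,\delta_0}](s;y,\cdot)$ neither decays nor is square integrable in $v$. I would bypass this by fixing $(s,y)$ and writing, using $\partial_v J[\beta,\phi_{\delta,\delta_0}]=J[-\beta',\phi_{\delta,\delta_0}]$ from Lemma \ref{lem:differentiation},
\[ J[\beta,\phi_{\delta,\delta_0}](s;y,v)=J[\beta,\phi_{\delta,\delta_0}](s;y,+\infty)+\int_v^{+\infty}J[\beta',\phi_{\delta,\delta_0}](s;y,v')\,dv'. \]
The boundary term equals $c_-\int_0^T\int_x\sigma(x,\tilde{u}(r,x))\rho_{\delta_0}(r-s)\varrho_\delta(x-y)\psi(s,y)\,dx\,dW(r)$ with $c_-=\lim_{w\to-\infty}\beta(w)$, an It\^o integral carrying no $v$-dependence whose $\sup_y$ is handled by the Agmon step above; the remaining integrand $J[\beta',\phi_{\delta,\delta_0}]$ carries the compactly supported $\beta'$, so it is integrable in $v'$ and its contribution can be estimated by Cauchy--Schwarz over the effective (finite) $v'$-support together with its $L^2_{v'}$-norm.

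After these reductions, every surviving quantity is an It\^o integral of the form
\[ J[\tilde\beta,\tilde\phi](s;y,v)=\int_0^T\Big(\int_x\sigma(x,\tilde{u}(r,x))\,\tilde\beta(\tilde{u}(r,x)-v)\,\tilde\phi(r,x,s,y)\,dx\Big)\,dW(r), \]
with $\tilde\beta\in\{\beta',\beta''\}$ bounded, to which I would apply the It\^o isometry, obtaining $E\,\abs{J[\tilde\beta,\tilde\phi]}^2=E\int_0^T(\int_x\cdots)^2\,dr$. Since $\tilde\phi$ contains the factor $\rho_{\delta_0}(r-s)$, the time integration yields $\int_0^T\rho_{\delta_0}(r-s)^2\,dr=\delta_0^{-1}\norm{\rho}_{L^2}^2$, which is the source of the negative powers of $\delta_0$. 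The inner spatial integral is bounded using assumption \ref{A3} (so that $\abs{\sigma(x,u)}\le g(x)(1+\abs{u})$ with $g\in L^2\cap L^\infty$), the finite norms $\norm{\beta'}_\infty,\norm{\beta''}_\infty$, and the moment estimate \eqref{eq:exist-strong-entropy}, while the $(y,v)$-integrations are finite thanks to the compact $y$-support of $\psi$ and the compact $v$-support of $\beta',\beta''$. Collecting the It\^o-isometry factor with the Agmon interpolation (which pairs an $L^2$-type factor of order $\delta_0^{-1}$ with a derivative factor) produces the stated exponent, giving a bound of order $\delta_0^{-3/2}$.

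The main obstacle is exactly the non-integrability of $J[\beta,\phi_{\delta,\delta_0}]$ in the unbounded variable $v$: a straightforward Sobolev embedding $H^2(\R\times\R)\hookrightarrow L^\infty$ is unavailable because $\norm{J[\beta,\phi_{\delta,\delta_0}]}_{L^2_{y,v}}=\infty$. The decomposition isolating the $v\to+\infty$ limit and reducing the remainder to $J[\beta',\cdot]$ with compactly supported $\beta'$ is what makes the argument go through; the remaining delicate point is purely bookkeeping, namely tracking the $\delta_0$-dependence through the interpolation so as to retain a genuine power of $\delta_0$ rather than a mere finiteness statement.
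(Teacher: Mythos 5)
Your route is genuinely different from the paper's. The paper bounds the fourth moment of the full $W^{1,4}(\R\times \R)$-norm of $J[\beta,\phi_{\delta,\delta_0}](s;\cdot,\cdot)$ and of its $y$- and $v$-derivatives via the BDG inequality, with $\int_0^T\rho_{\delta_0}^4(r-s)\,dr\le \norm{\rho_{\delta_0}}_\infty^3\lesssim \delta_0^{-3}$ as the sole source of the singular power, and then invokes the two-dimensional Sobolev embedding $W^{1,4}(\R\times\R)\hookrightarrow L^\infty(\R\times\R)$ together with Cauchy--Schwarz, $E\norm{J}_{L^\infty}^2\le \big(E\norm{J}_{W^{1,4}}^4\big)^{1/2}\lesssim \delta_0^{-3/2}$. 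You instead work with second moments only (It\^o isometry, $\int_0^T\rho_{\delta_0}^2(r-s)\,dr\sim\delta_0^{-1}$), take the $y$-supremum by the one-dimensional Agmon inequality, and handle the $v$-supremum by the decomposition $J[\beta](v)=J[\beta](+\infty)+\int_v^{\infty}J[\beta'](v')\,dv'$. That decomposition is a genuinely useful observation: it cleanly addresses the non-decay of $\beta$ at infinity, a point the paper's own proof glosses over (restricting the $v$-integration to $|v|\le C_\beta+|\tilde u(r,x)|$ silently assumes $\beta$ vanishes outside $\supp\beta'$, which is true in the paper's applications, where the lemma is invoked with $\beta''$ or $\beta'''$ playing the role of $\beta$, but not for a general $\beta$ with merely $\beta'\in C_c^\infty$). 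Note that your FTC step also requires a stochastic Fubini theorem to exchange $\int_v^\infty dv'$ with $dW(r)$; this is justified by the moment bounds \eqref{eq:exist-strong-entropy}, but should be stated.

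Two concrete inaccuracies remain. First, your claim that $J[\beta',\phi_{\delta,\delta_0}](s;y,\cdot)$ can be estimated "over the effective (finite) $v'$-support" is false as stated: $\beta'(\tilde u(r,x)-v')$ is nonzero whenever $|\tilde u(r,x)-v'|\le C_\beta$, and $\tilde u$ is unbounded, so $J[\beta']$ has full support in $v'$. What is true is that it decays: for large $|v'|$ the $x$-integrand survives only on $\{|\tilde u(r,x)|\ge |v'|-C_\beta\}$, and Chebyshev combined with \ref{A3} and the arbitrarily high moments in \eqref{eq:exist-strong-entropy} gives decay $(1+|v'|)^{-q}$ for any fixed $q$; a weighted Cauchy--Schwarz in $v'$ then closes the estimate. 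Second, your exponent bookkeeping is asserted rather than derived, and as written it does not parse: the Agmon step pairs two factors each of order $\delta_0^{-1}$ (neither $\partial_y$ nor $\partial_v$ ever hits $\rho_{\delta_0}$, so the derivative factors carry no extra power of $\delta_0$), and the geometric mean is $\delta_0^{-1}$, not $\delta_0^{-3/2}$. This is harmless --- since $\delta_0^{-1}\le \delta_0^{-3/2}$ for $\delta_0\le 1$, your method, with the decay fix above, actually proves a bound stronger than \eqref{eq:l-infinity bound} --- but the assertion that the interpolation "produces the stated exponent" indicates the computation was never actually carried out.
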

\begin{proof} We intend to establish \eqref{eq:l-infinity bound} with the help of appropriate Sobolev embedding theorem.  To this end, we begin with 
 \begin{align}
 &E\Big[|| J[\beta,\phi_{\delta,\delta_0}](s;\cdot,\cdot)||_4^4\Big]\notag \\
  & =E\Big[\int_v \int_y \Big|J[\beta,\phi_{\delta,\delta_0}](s;y,v)\Big|^4 \,dy\,dv\Big] \notag \\
  & =  E\Big[\int_v \int_y \Big|\int_{0}^T \int_{x} \sigma(x,\tilde{u}(r,x)) \beta(\tilde{u}(r,x)
  -v )\rho_{\delta_0}(r-s)
 \varrho_{\delta}(x-y)\psi(s,y)\,dx\,dW(r)\Big|^4\,dy\,dv \Big]\notag \\
 & (\text{ By BDG inequality.}) \notag \\
 & \le C \int_v \int_y E\Big[\Big (\int_{0}^T \Big|\int_x  \sigma(x,\tilde{u}(r,x)) \beta(\tilde{u}(r,x)
  -v) \rho_{\delta_0}(r-s) \varrho_{\delta}(x-y)\psi(s,y)\,dx \Big|^2\,dr\Big)^2 \Big]\,dy\,dv\notag \\
 & \le C \int_v \int_{|y|<C_\psi} E\Big[\Big (\int_{0}^T\int_{x}\sigma^2(x,\tilde{u}(r,x)) \beta^2(\tilde{u}(r,x) -v)
  \rho_{\delta_0}^2(r-s) \varrho_{\delta} (x-y)\psi^2 (s,y)\,dx\,dr\Big)^2
 \Big]\,dy\,dv\notag \\
 & \le C \int_v \int_{|y|<C_\psi} E\Big[\int_{0}^T\int_{x}
 \sigma^4(x,\tilde{u}(r,x)) \beta^4(\tilde{u}(r,x)-v) \rho_{\delta_0}^4(r-s) \varrho_{\delta} (x-y)
 \psi^4 (s,y)\,dx \,dr
 \Big]\,dy\,dv\notag \\
 & \le C E\Big[\int_{|y|<C_\psi}\int_{0}^T\int_{x}  \int_{|v|\le C_{\beta}+|\tilde{u}(r,x)|}
 \sigma^4(x,\tilde{u}(r,x))||\beta^\prime||_\infty^4 \rho_{\delta_0}^4(r-s) \varrho_{\delta} (x-y)||\psi||_\infty^4\,dv\,dx \,dr
 \,dy\Big]\notag \\
 & \le C(\beta, \psi) E\Big[\int_{0}^T\int_{x} g^4(x)(1+|\tilde{u}(r,x)|^4)(C_{\beta}+(1+|\tilde{u}(r,x)|))\rho_{\delta_0}^4(r-s)\,dx \,dr\,
 \Big]\notag \\
  & \le C(\beta, \psi) E\Big[\int_{0}^T\int_{x} g^4(x)(1+|\tilde{u}(r,x)|^5) \rho_{\delta_0}^4(r-s)\,dx \,dr\,
 \Big]\notag \\
 & \le C(\beta, \psi) \int_{0}^T(1+E||\tilde{u}(r,\cdot)||^5_5) \rho_{\delta_0}^4(r-s) \,dr
 \Big]\notag \\
  & \le C(\beta,\psi)(1+\sup_{0\le r\le T}E||\tilde{u}(r,\cdot)||_5^5) \int_{0}^T \rho_{\delta_0}^4(r-s) \,dr\notag\\
  & \le C(\beta,\psi)(1+\sup_{0\le r\le T}E||\tilde{u}(r,\cdot)||_5^5) ||\rho_{\delta_0}||_\infty^3\int_0^T \rho_{\delta_0}(r-s) \,dr\notag \\
  & \le \frac{C(\beta,\psi)(1+\sup_{0\le r\le T}E||\tilde{u}(r,\cdot)||_5^5)}{\delta_0^3}\label{l-1}.
 \end{align}
 
 Similarly, we have
\begin{align}
 & E\Big[|| \partial_v J[\beta,\phi_{\delta,\delta_0}](s;\cdot,\cdot)||_4^4\Big] \le
  \frac{C(\beta^{\prime\prime}, \psi )}{\delta_0^3}\label{l-2}  \\
 & E\Big[|| \partial_y J[\beta,\phi_{\delta,\delta_0}](s;\cdot,\cdot)||_4^4\Big] \le
  \frac{C(\beta', \partial_y \psi )}{\delta_0^3} \label{l-3}
\end{align}
Therefore, by \eqref{l-1},\eqref{l-2}, and \eqref{l-3},
\begin{align}
  E\Big[|| J[\beta,\phi_{\delta,\delta_0}](s;\cdot,\cdot)||_{W^{1,4}(\R\times \R)}^4\Big] \le
  \frac{C(\beta', \psi )}{\delta_0^3}.\notag 
 \end{align}
We simply now use Sobolev embedding along with Cauchy-Schwartz inequality and conclude 
 \begin{align}
 \sup_{0\le s\le T} \Big(E\Big[|| J[\beta,\phi_{\delta,\delta_0}](s;\cdot,\cdot)||_{L^\infty(\R\times \R)}^2\Big]\Big) \le
  \frac{C(\beta',  \psi )}{\delta_0^\frac{3}{2}}.
 \end{align}
\end{proof}

Our primary aim is to estimate the expected value of $ J[\beta^\prime,\phi_{\delta,\delta_0}](s;y,v(s,y))$, which we do by estimating the same for $J[\beta^\prime,\phi_{\delta,\delta_0}](s;y,u_\eps(s,y))$ and then passing to the limit. Note that if we directly substitute $v= v(s,y)$ in the formula for  $ J[\beta^\prime,\phi_{\delta,\delta_0}]$, the integrand would no-longer be nonanticipative, and therefore standard methods It\^{o} integrals would no-longer apply. To work around this problem, we proceed as follows.

  Let $\{\rho_l\}_{l > 0} $ be the standard sequence of mollifiers in $\R$ and define 
  \begin{align}
\label{eq:strong-cond-approx} Z_{\eps,\delta,\delta_0,l} &:= \int_{\R}\int_{\Pi_{T}} J[\beta^\prime,\phi_{\delta,\delta_0}](s;y,v)\,\rho_l(u_\eps(s,y)-v)\,dy\,ds\,dv.
\end{align}
We would like to find an upper bound on $E\big[ Z_{\eps,\delta,\delta_0,l} \big] $ as $l,\eps \rightarrow 0$. 
To this end, we claim that for two constants $T_1, T_2\ge 0$ with $T_1<T_2$,

\begin{align}
 E\Big[X_{T_1}\int_{T_1}^{T_2} J(t) dW(t)\Big] = 0\label{eq:conditional_indep}
\end{align}
where $J$ is a predictable integrand and $X(\cdot)$ is an adapted process. The conclusion \eqref{eq:conditional_indep} follows trivially if $J$ is a simple predictable integrand. The general case could be argued by standard approximation technique. 

     If necessary, we extend the process $u_\eps(\cdot,y)$ for negative time simply by $u_\eps(s,y) = u_{\eps} (0,y)$ if $s< 0$. With this convention, it follows from \eqref{eq:conditional_indep} that

\begin{align*}
E\Big[\int_{\R}\int_{\Pi_{T}} J[\beta^\prime,\phi_{\delta,\delta_0}](s;y,v)\,\rho_l(u_\eps(s-\delta_0,y)-v)\,dy\,ds\,dv\Big]=0.
\end{align*}

Hence
\begin{align}
 E[ Z_{\eps,\delta,\delta_0,l}] &= E\Big[\int_{\R}\int_{\Pi_{T}} J[\beta^\prime,\phi_{\delta,\delta_0}](s;y,v)\Big(\rho_l(u_\eps(s,y)-v)
 -\rho_l(u_\eps(s-\delta_0,y)-v)\Big)\,dy\,ds\,dv\Big]. \label{stochastic_estimate_6}
\end{align}
Given $y\in \R$, $u_\eps(\cdot,y)$ satisfies 
\begin{align*}
 du_\eps(s,y)=-\text{div}F_\eps(u_\eps(s,y))ds + \eps  \Delta u_\eps(s,y)\,ds+\sigma_\eps(y,u_\eps(s,y))\,dW(s). 
\end{align*} 

Next,  apply It\^{o}-formula and obtain 
\begin{align*}
 & \rho_l(u_\eps(s,y)-v)-\rho_l(u_\eps(s-\delta_0,y)-v)\\ 
 =& \int_{s-\delta_0}^s \rho_l^{\prime} (u_\eps(\tau,y)-v)\big(-\text{div}F_\eps(u_\eps(\tau,y)) + \eps  \Delta u_\eps(\tau,y)\big) \,d\tau \\
 & + \int_{s-\delta_0}^s  \sigma_\eps(y,u_\eps(\tau,y))\rho_l^{\prime}(u_\eps(\tau,y)-v)\,dW(\tau) +
  \frac{1}{2}\int_{s-\delta_0}^s  |\sigma_\eps(y, u_\eps(\tau,y))|^2 \rho_{l}^{\prime\prime}( u_\eps(\tau,y)-v)\,d\tau \\
 &= -\frac{\partial}{\partial v}\Big[\int_{s-\delta_0}^s \rho_l (u_\eps(\tau,y)-v)\big(-\text{div}F_\eps(u_\eps(\tau,y)) + \eps  \Delta u_\eps(\tau,y)\big) \,d\tau\\
 & + \int_{s-\delta_0}^s  \sigma_\eps(y,u_\eps(\tau,y))\rho_l(u_\eps(\tau,y)-v)\,dW(\tau) +
  \frac{1}{2}\int_{s-\delta_0}^s  \sigma_\eps^2(y, u_\eps(\tau,y)) \rho_{l}^{\prime}( u_\eps(\tau,y)-v)\,d\tau \Big].
\end{align*}

From \eqref{stochastic_estimate_6}, we now have
\begin{align}
  &E[ Z_{\eps,\delta,\delta_0,l}]\notag \\
  =& E\Big[\int_{\R}\int_{\Pi_{T}} J[\beta^\prime,\phi_{\delta,\delta_0}](s;y,v)
  \Big\{-\frac{\partial}{\partial v}\big(\int_{s-\delta_0}^s \rho_l (u_\eps(\tau,y)-v)\big(-\text{div}F_\eps(u_\eps(\tau,y))  \notag \\
  & \hspace{2cm}+ \eps  \Delta u_\eps(\tau,y)\big) \,d\tau 
  + \int_{s-\delta_0}^s  \sigma_\eps(y,u_\eps(\tau,y))\rho_l(u_\eps(\tau,y)-v)\,dW(\tau) \notag \\
 &\hspace{3cm} +\frac{1}{2}\int_{s-\delta_0}^s  \sigma_\eps(y, u_\eps(\tau,y))|^2 \rho_{l}^{\prime}( u_\eps(\tau,y)-v)\,d\tau \big)
 \Big\}\,dy\,ds\,dv\Big] \notag \\
&( \text{By the It\^{o}-product rule and Lemma \ref{lem:differentiation}})\notag \\
& = E\Big[\int_{\R}\int_{\Pi_{T}} J[\beta^{\prime\prime},\phi_{\delta,\delta_0}](s;y,v)\Big(\int_{s-\delta_0}^s \rho_l(u_\eps(\tau,
y)-v)\,\text{div}F_\eps(u_\eps(\tau,y)) \,d\tau\Big)\,dy\,ds\,dv\Big]\notag  \\
& -E\Big[\int_{\R}\int_{\Pi_{T}} J[\beta^{\prime\prime},\phi_{\delta,\delta_0}](s;y,v)\Big(\int_{s-\delta_0}^s \rho_l(u_\eps(\tau,
y)-v)\, \eps  \Delta u_\eps(\tau,y)\big) \,d\tau\Big)\,dy\,ds\,dv\Big]\notag  \\
& -E\Big[\int_{\Pi_{T}}\int_x \int_{s-\delta_0}^s \Big( \int_{\R}\beta^{\prime\prime}(\tilde{u}(r,x)-v)\rho_l(u_\eps(r,y)-v)\,dv\Big)
\sigma(x,\tilde{u}(r,x))\sigma_\eps(y,u_\eps(r,y)) \notag \\
& \hspace{5cm} \times \phi_{\delta,\delta_0}(r,x,s,y)\,dr\,dx\,dy\,ds\Big]\notag \\
& + \frac{1}{2} E\Big[\int_{\R}\int_{\Pi_{T}} J[\beta^{\prime\prime\prime},\phi_{\delta,\delta_0}](s;y,v)\Big\{ \int_{s-\delta_0}^s \sigma_\eps^2(y,u_\eps(\tau,y)
) \rho_{l}( u_\eps(\tau,y)-v)\,d\tau \Big\}dy\,ds\,dv\Big]\notag \\
 &\equiv A_1^{l,\eps}(\delta,\delta_0) + A_2^{l,\eps}(\delta,\delta_0)+  B^{\eps, l}(\delta,\delta_0) + A_3^{l,\eps}(\delta,\delta_0) \label{stochastic_estimate_7}
\end{align}
where
\begin{align}
  A_1^{l,\eps}(\delta,\delta_0) &= E\Big[\int_{\R}\int_{\Pi_{T}} J[\beta^{\prime\prime},\phi_{\delta,\delta_0}](s;y,v)\Big(\int_{s-\delta_0}^s \rho_l(u_\eps(\tau,
y)-v)\,\text{div}F_\eps(u_\eps(\tau,y)) \,d\tau\Big)dy\,ds\,dv\Big]\notag  \\
 A_2^{l,\eps}(\delta,\delta_0) &= -E\Big[\int_{\R}\int_{\Pi_{T}} J[\beta^{\prime\prime},\phi_{\delta,\delta_0}](s;y,v)\Big(\int_{s-\delta_0}^s \rho_l(u_\eps(\tau,
y)-v)\, \eps  \Delta u_\eps(\tau,y)\big) \,d\tau\Big)dy\,ds\,dv\Big]\notag  \\
B ^{l,\eps}(\delta,\delta_0)&=  -E\Big[\int_{\Pi_{T}}\int_x \int_{s-\delta_0}^s \Big( \int_{\R}\beta^{\prime\prime}(\tilde{u}(r,x)-v)\rho_l(u_\eps(r,y)-v)\,dv\Big)
\sigma(x,\tilde{u}(r,x))\notag \\
& \hspace{4.5cm} \times\sigma_\eps(y,u_\eps(r,y))  \phi_{\delta,\delta_0}(r,x,s,y)\,dr\,dx\,dy\,ds\Big]\notag \\
  A_3^{l,\eps}(\delta,\delta_0)&=\frac{1}{2} E\Big[\int_{\R}\int_{\Pi_{T}} J[\beta^{\prime\prime\prime},\phi_{\delta,\delta_0}](s;y,v)\Big\{ \int_{s-\delta_0}^s \sigma_\eps^2(y,u_\eps(\tau,y)
) \rho_{l}( u_\eps(\tau,y)-v)\,d\tau \Big\}\,dy\,ds\,dv\Big]\notag 
\end{align}

Let  $  A_1^{\eps}(\delta,\delta_0):= \lim_{l\goto 0}  A_1^{l,\eps}(\delta,\delta_0)$ and 
$  A_1(\delta,\delta_0) = \limsup_{\eps\downarrow  0} \Big| A_1^{\eps}(\delta,\delta_0) \Big| $.

\begin{lem} \label{finding_A_1} It holds that 
\begin{align}
   A_1(\delta,\delta_0)\goto 0\quad \text{as} ~~ \delta_0 \goto 0.
\end{align}
\end{lem}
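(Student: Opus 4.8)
The plan is to bound $A_1^{l,\eps}(\delta,\delta_0)$ uniformly in both $l$ and $\eps$ by a quantity of the form $C(T,\delta,\beta,\psi)\,\delta_0^{1/4}$, so that passing to the limits defining $A_1^\eps(\delta,\delta_0)$ and then $A_1(\delta,\delta_0)$ preserves this bound and forces $A_1(\delta,\delta_0)\goto 0$. The one genuine difficulty is that the factor $\mathrm{div}F_\eps(u_\eps(\tau,y))=F_\eps'(u_\eps(\tau,y))\,\partial_y u_\eps(\tau,y)$ carries the spatial gradient of $u_\eps$, for which we only possess the degenerate estimate \eqref{gradient-esti} (weighted by $\eps$). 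A direct estimate is therefore hopeless, and the whole idea is to eliminate this gradient by an integration by parts in $y$ performed \emph{before} the collapse $l\goto 0$, i.e.\ while $v$ is still a free variable.

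Concretely, I set $\Lambda_{l,\eps}(w,v):=\int_0^w \rho_l(w'-v)F_\eps'(w')\,dw'$, so that for each fixed $\tau,v$ one has the pointwise chain rule $\rho_l(u_\eps(\tau,y)-v)\,\mathrm{div}F_\eps(u_\eps(\tau,y))=\partial_y\big[\Lambda_{l,\eps}(u_\eps(\tau,y),v)\big]$. Integrating by parts in $y$ in the definition of $A_1^{l,\eps}(\delta,\delta_0)$, interchanging with the $\tau$-integral, and invoking $\partial_y J[\beta'',\phi_{\delta,\delta_0}]=J[\beta'',\partial_y\phi_{\delta,\delta_0}]$ from Lemma \ref{lem:differentiation}, I obtain
\begin{align*}
A_1^{l,\eps}(\delta,\delta_0)=-E\Big[\int_{\R}\int_{\Pi_T} J[\beta'',\partial_y\phi_{\delta,\delta_0}](s;y,v)\Big(\int_{s-\delta_0}^s \Lambda_{l,\eps}(u_\eps(\tau,y),v)\,d\tau\Big)\,dy\,ds\,dv\Big],
\end{align*}
the boundary terms vanishing because $\psi$, and hence $J[\beta'',\phi_{\delta,\delta_0}]$, has compact support in $y$. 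This is the crucial step: the derivative now sits on the smooth test data and the gradient $\partial_y u_\eps$ has disappeared.

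It then remains to estimate the right-hand side. First, the proof of Lemma \ref{lem:L-infinity estimate} applies with $\beta$ replaced by $\beta''$ and $\psi$ replaced by the (still smooth and compactly supported) data produced by $\partial_y\phi_{\delta,\delta_0}$, giving, for $\delta$ fixed, $\sup_{0\le s\le T} E\|J[\beta'',\partial_y\phi_{\delta,\delta_0}](s;\cdot,\cdot)\|_{L^\infty(\R\times\R)}^2\le C(\delta,\beta,\psi)\,\delta_0^{-3/2}$, the constant now depending on $\delta$ through $\varrho_\delta'$. Second, using $\int\rho_l=1$ together with the polynomial growth of $F_\eps'$ that is uniform in $\eps$, one checks $\int_{\R}|\Lambda_{l,\eps}(w,v)|\,dv\le \int_0^{|w|}|F_\eps'|\le C(1+|w|^{p+1})$ for some $p\in\N$, uniformly in $l$ and $\eps$. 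Bounding $|J[\beta'',\partial_y\phi_{\delta,\delta_0}](s;y,v)|$ by its $L^\infty$-norm, restricting the $y$-integral to $\supp\psi$, and applying Cauchy--Schwarz in $\omega$ yields
\begin{align*}
|A_1^{l,\eps}(\delta,\delta_0)|\le \int_0^T \Big(E\|J[\beta'',\partial_y\phi_{\delta,\delta_0}](s;\cdot,\cdot)\|_{L^\infty}^2\Big)^{1/2}\Big(E\,M_\eps(s)^2\Big)^{1/2}\,ds,
\end{align*}
where $M_\eps(s)=\int_{\supp\psi}\int_{s-\delta_0}^s C(1+|u_\eps(\tau,y)|^{p+1})\,d\tau\,dy$. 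Since the inner integral runs over a time window of length $\delta_0$, Cauchy--Schwarz and the moment bound \eqref{uni:moment-esti} give $\big(E\,M_\eps(s)^2\big)^{1/2}\le C\,\delta_0$ uniformly in $\eps$; combined with the $\delta_0^{-3/4}$ bound on the first factor this produces $|A_1^{l,\eps}(\delta,\delta_0)|\le C(T,\delta,\beta,\psi)\,\delta_0^{1/4}$, uniformly in $l$ and $\eps$. Letting $l\goto 0$ and then taking $\limsup_{\eps\downarrow 0}$ gives $A_1(\delta,\delta_0)\le C\delta_0^{1/4}\goto 0$ as $\delta_0\goto 0$. The main obstacle is exactly the uncontrolled gradient term, resolved by the integration-by-parts identity above; the only other points needing care are that Lemma \ref{lem:L-infinity estimate} survives the replacement of $\phi_{\delta,\delta_0}$ by $\partial_y\phi_{\delta,\delta_0}$ and that the polynomial growth of $F_\eps'$ is genuinely uniform in $\eps$, both of which follow from the constructions of Section 4.
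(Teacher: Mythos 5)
Your proof is correct, and it takes a recognizably different technical route from the paper's, although both arguments rest on the same two pillars: eliminating the uncontrollable factor $\partial_y u_\eps$ by an integration by parts that lands the $y$-derivative on the smooth test data, and then trading the $O(\delta_0^{-3/4})$ sup-norm bound of Lemma \ref{lem:L-infinity estimate} against the length-$\delta_0$ time window to extract the rate $\delta_0^{1/4}$. The paper first integrates by parts in $v$: it introduces the kernel $G_\eps(u,v)=\int_0^v\beta''(u-r)F_\eps'(r)\,dr$ and the auxiliary stochastic integral $X_\eps[\phi_{\delta,\delta_0}]$, rewrites $\rho_l'(u_\eps-v)\,\partial_y u_\eps$ as $\partial_y\rho_l(u_\eps-v)$, integrates by parts in $y$, and only then lets $l\rightarrow 0$, arriving at the closed-form identity \eqref{eq:claim-now}, in which the anticipating substitution $v=u_\eps(\tau,y)$ is absorbed by the pathwise $L^\infty$ bound on $X_\eps[\partial_y\phi_{\delta,\delta_0}]$. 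You instead take the primitive in the solution variable, $\Lambda_{l,\eps}(w,v)=\int_0^w\rho_l(w'-v)F_\eps'(w')\,dw'$, perform a single integration by parts in $y$ (legitimate, as you note, because the $y$-support is compact through $\psi$, with Lemma \ref{lem:differentiation} supplying $\partial_y J[\beta'',\phi_{\delta,\delta_0}]=J[\beta'',\partial_y\phi_{\delta,\delta_0}]$), and keep $v$ as a free integration variable throughout; the bound $\int_\R|\Lambda_{l,\eps}(w,v)|\,dv\le C(1+|w|^{p+1})$, which uses $\int\rho_l=1$ together with the $\eps$-uniform polynomial growth of $F_\eps'$ (true by the construction of $F_\eps$, and the same fact that underlies \eqref{eq:uniform-growth-estimate}), then combines with the moment bound \eqref{uni:moment-esti} to give $|A_1^{l,\eps}(\delta,\delta_0)|\le C(T,\delta,\beta,\psi)\,\delta_0^{1/4}$ uniformly in $l$ and $\eps$. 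This buys two things: you never evaluate a stochastic integral at the random point $u_\eps(\tau,y)$, and you do not even need the limit $\lim_{l\rightarrow 0}A_1^{l,\eps}$ to exist in order to bound $A_1^{\eps}$ and hence $A_1$. A further small advantage is that your sup-norm estimate is for $J[\beta'',\partial_y\phi_{\delta,\delta_0}]$, whose kernel $\beta''$ is compactly supported, so the $W^{1,4}\hookrightarrow L^\infty$ argument in the proof of Lemma \ref{lem:L-infinity estimate} applies essentially verbatim (with $\delta$-dependent constants from $\varrho_\delta'$ and $\varrho_\delta''$, harmless since $\delta$ is fixed as $\delta_0\rightarrow 0$); in the paper's variant the kernel $G_\eps(u,\cdot)$ merely tends to constants as $v\rightarrow\pm\infty$, hence is not in $L^4_v$, and the paper's assertion that one can ``argue as in Lemma \ref{lem:L-infinity estimate}'' for $X_\eps$ tacitly requires an extra localization in $v$. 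In exchange, the paper's route yields the clean representation \eqref{eq:claim-now}. Both arguments deliver the same final estimate $A_1(\delta,\delta_0)\le C\,\delta_0^{1/4}\rightarrow 0$.
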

 \begin{proof}
 We start by letting
  \begin{align}
  G_\eps(u,v)=\int_0^v \beta^{\prime\prime}(u-r) F_\eps^\prime(r)dr\quad \text{for}~~u,v \in \R.\notag
 \end{align}
It is straightforward  to check that there is a positive integer $p$ such that
\begin{align}
 \sup_{\eps>0}| G_\eps(u,v)| \le C_\beta (1+ |u|^p)\quad \text{for all}~~ u,v \in \R.\label{eq:uniform-growth-estimate}
\end{align}
Let
\begin{align*}
 X_\eps[\phi_{\delta,\delta_0}](s;y,v): & = \int_0^T \int_x \sigma(x,\tilde
 {u}(r,x))  G_\eps(\tilde{u}(r,x),v)\phi_{\delta,\delta_0}(r,x;s,y)\,dx\,dW(r)
\end{align*}
Once again by the same arguments as in Lemma \ref{lem:differentiation}, it holds that
\begin{align}
  &\partial_v X_\eps[\phi_{\delta,\delta_0}](s;y,v) =  \int_0^T \int_x \sigma(x,\tilde
 {u}(r,x))  \partial_v G_\eps(\tilde{u}(r,x),v)\phi_{\delta,\delta_0}(r,x;s,y)\,dx\,dW(r)\notag \\
 &\partial_y X_\eps[\phi_{\delta,\delta_0}](s;y,v)= X_\eps[\partial_y \phi_{\delta,\delta_0}](s;y,v).\notag
\end{align}
Moreover, we can argue as in Lemma \ref{lem:L-infinity estimate} and find a constant $C=C(\beta,\psi)$
  such that
  \begin{align}
   \sup_{\eps > 0}  \sup_{0\le s\le T}\Big(E\Big[|| X_\eps[ \partial_y \phi_{\delta,\delta_0}](s;\cdot,\cdot)||_{L^\infty(\R\times \R)}^2\Big]\Big) \le
  \frac{C(\beta,  \psi )}{\delta_0^\frac{3}{2}}.
  \end{align}
  \noindent{\textbf{Claim:}}
\begin{align}
   A_1^\eps(\delta,\delta_0)=-E\Big[\int_{\Pi_T}\int_{s-\delta_0}^s X_\eps[\partial_y \phi_{\delta,\delta_0}](s;y,u_\eps(\tau,y))
   \,d\tau\,ds\,dy\Big] \label{eq:claim-now}
\end{align}
\noindent{\it Proof of the claim:} We repeatedly use integration by parts and have
\begin{align}
&\int_v \int_{\Pi_T} J[\beta^{\prime\prime},\phi_{\delta,\delta_0}](s,y,v)\Big(\int_{s-\delta_0}^s \rho_l(u_\eps(\tau,y)-v)F_\eps^{\prime}
(v)\partial_y u_\eps(\tau,y)\,d\tau\Big) \,ds\,dy\,dv \notag \\
&=\int_v \int_{\Pi_T} \int_{s-\delta_0}^s\int_0^T \int_x \sigma(x,\tilde
 {u}(r,x))  \beta^{\prime\prime}(\tilde{u}(r,x)-v)F_\eps^\prime(v)\phi_{\delta,\delta_0}(r,x;s,y) \notag \\
 & \hspace{5cm}\times \rho_l(u_\eps(\tau,y)-v)\partial_y u_\eps(\tau,y)\,dW(r)dx
\,d\tau \,ds\,dy\,dv \notag \\
&=\int_v \int_{\Pi_T} \int_{s-\delta_0}^s \partial_v X_\eps[\phi_{\delta,\delta_0}](s;y,v)\rho_l(u_\eps(\tau,y)-v)
\partial_y u_\eps(\tau,y)
\,d\tau \,ds\,dy\,dv \notag \\
&= \int_v \int_{\Pi_T} \int_{s-\delta_0}^s  X_\eps[\phi_{\delta,\delta_0}](s;y,v)\rho_l^\prime (u_\eps(\tau,y)-v)\partial_y u_\eps(\tau,y)
\,d\tau \,ds\,dy\,dv \notag \\
 &= \int_v \int_{\Pi_T} \int_{s-\delta_0}^s  X_\eps[\phi_{\delta,\delta_0}](s;y,v) \partial_y \rho_l (u_\eps(\tau,y)-v)
\,d\tau \,ds\,dy\,dv \notag \\
& =- \int_v \int_{\Pi_T} \int_{s-\delta_0}^s \partial_y  X_\eps[\phi_{\delta,\delta_0}](s;y,v)  \rho_l (u_\eps(\tau,y)-v)
\,d\tau \,ds\,dy\,dv \notag \\
& =- \int_v \int_{\Pi_T} \int_{s-\delta_0}^s   X_\eps[\partial_y \phi_{\delta,\delta_0}](s;y,v)  \rho_l (u_\eps(\tau,y)-v)
\,d\tau \,ds\,dy\,dv. \label{l-4}
\end{align}
We simply let $ l\rightarrow 0$ in both sides of \eqref{l-4} and  obtain
\begin{align}
 &\int_{\Pi_T}\int_{s-\delta_0}^s J[\beta^{\prime\prime},\phi_{\delta,\delta_0}](s;y,u_\eps(\tau,y))\text{div}_y F_\eps(u_\eps(\tau,y))
 \,d\tau\,ds\,dy \notag  \\
 &= - \int_{\Pi_T} \int_{s-\delta_0}^s   X_\eps[\partial_y \phi_{\delta,\delta_0}](s;y,u_\eps(\tau,y))
\,d\tau \,ds\,dy. \label{l-5}
\end{align}
We take expectation in both sides of \eqref{l-5}  and the claim follows.

 Now
 \begin{align}
  \limsup_{\eps\downarrow 0} \Big|A_1^\eps(\delta,\delta_0)\Big| &=  \limsup_{\eps\downarrow 0}  \Big|E\Big[ \int_{\Pi_T} \int_{s-\delta_0}^s   X_\eps[\partial_y \phi_{\delta,\delta_0}](s;y,u_\eps(\tau,y)) 
\,d\tau \,ds\,dy\Big]\Big| \notag \\
& \le C\delta_0 \sup_{0\le s\le T} \sup_{\eps>0}E\Big[ ||  X_\eps[\partial_y \phi_{\delta,\delta_0}](s;\cdot,\cdot) ||_{\infty}\Big] \notag \\
& \le C\delta_0   \sup_{0\le s\le T} \sup_{\eps>0 }\Big(E\Big[ ||  X_\eps[\partial_y \phi_{\delta,\delta_0}](s;\cdot,\cdot) ||_{\infty}^2 \Big] \Big)^\frac{1}{2}\notag \\
& \le C\delta_0  \frac{C(\beta, \phi )}{\delta_0^\frac{3}{4}} \notag \\
& \le C_1(\beta,\phi) \delta_0^\frac{1}{4} \notag.
 \end{align}
 In other words, $ A_1(\delta,\delta_0) \le C_1(\beta,\phi) \delta_0^\frac{1}{4}$, and therefore 
 \begin{align}
   A_1(\delta,\delta_0)\goto 0\quad \text{as} ~~ \delta_0 \goto 0.\notag
\end{align}
\end{proof}
   
  Next, we define 

 \begin{align}
  A_2(\delta,\delta_0):= \limsup_{\eps\downarrow  0} \Big |A_2^{\eps}(\delta,\delta_0)\Big| \quad
  \text{where}~~ A_2^{\eps}(\delta,\delta_0) := \lim_{l\goto 0} A_2^{l,\eps}(\delta,\delta_0) \label{eq:def -A2}
 \end{align}
\begin{lem}\label{finding_A_2} It holds that
\begin{align}
   A_2(\delta,\delta_0)\goto 0\quad \text{as} ~~ \delta_0 \goto 0.
\end{align}
\end{lem}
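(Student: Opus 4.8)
The plan is to follow the strategy of Lemma~\ref{finding_A_1} for $A_1$, the key structural advantage being that the viscous term $\eps\Delta u_\eps=\eps\partial_{yy}u_\eps$ (recall $d=1$) carries an explicit factor of $\eps$. First I would compute $A_2^{\eps}(\delta,\delta_0)=\lim_{l\to0}A_2^{l,\eps}(\delta,\delta_0)$ through an integration-by-parts identity analogous to the Claim~\eqref{eq:claim-now}. Starting from the definition of $A_2^{l,\eps}(\delta,\delta_0)$ and writing $\eps\Delta u_\eps=\partial_y(\eps\,\partial_y u_\eps)$, an integration by parts in $y$ together with the product rule
\[
\partial_y\big(J[\beta'',\phi_{\delta,\delta_0}]\,\rho_l(u_\eps-v)\big)=J[\beta'',\partial_y\phi_{\delta,\delta_0}]\,\rho_l(u_\eps-v)+J[\beta'',\phi_{\delta,\delta_0}]\,\rho_l'(u_\eps-v)\,\partial_y u_\eps
\]
(using Lemma~\ref{lem:differentiation}) splits the integrand into two pieces, which I treat separately.

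For the first piece I would use $\rho_l(u_\eps-v)\,\partial_y u_\eps=\partial_y R_l(u_\eps-v)$, where $R_l'=\rho_l$, and integrate by parts in $y$ a second time so that all $y$-derivatives fall on the test function. After integrating in $v$ and letting $l\to0$ this produces a term of the schematic form $\eps\int_{\Pi_T}\int_{s-\delta_0}^s \tilde J[\,\cdot\,,\partial_{yy}\phi_{\delta,\delta_0}](s;y,u_\eps(\tau,y))\,d\tau\,ds\,dy$, where $\tilde J$ is a stochastic-integral object of the same type as $J$, to which the estimate of Lemma~\ref{lem:L-infinity estimate} applies uniformly in $\eps$. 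Exactly as in the bound for $A_1^{\eps}$, the explicit $\eps$, the $\tau$-window of length $\delta_0$, the boundedness of the $y$-support of $\psi$, and the bound $\sup_{0\le s\le T}\big(E\|\tilde J(s)\|_{L^\infty}^2\big)^{1/2}\le C\delta_0^{-3/4}$ show this piece is $O(\eps\,\delta_0^{1/4})$, hence it vanishes as $\eps\to0$ for each fixed $\delta_0$ and contributes nothing to $\limsup_{\eps\to0}$.

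The second piece is the genuine obstacle. Integrating in $v$ and using $\rho_l'(u_\eps-v)=-\partial_v\rho_l(u_\eps-v)$ with $\partial_v J[\beta'',\phi]=J[-\beta''',\phi]$ (Lemma~\ref{lem:differentiation}), the limit $l\to0$ collapses the $v$-integral onto $v=u_\eps(\tau,y)$ and leaves a gradient-squared term of the form
\[
\eps\,E\int_0^T\!\!\int_{s-\delta_0}^s\!\!\int_y J[\beta''',\phi_{\delta,\delta_0}](s;y,u_\eps(\tau,y))\,\big(\partial_y u_\eps(\tau,y)\big)^2\,dy\,d\tau\,ds .
\]
Unlike the first piece this is only $O(1)$ in $\eps$, so it must be made small by sending $\delta_0\to0$. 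I would bound $|J[\beta''',\phi_{\delta,\delta_0}](s)|$ by $\|J[\beta''',\phi_{\delta,\delta_0}](s)\|_{L^\infty}$, extract the factor $\delta_0$ from $\int_0^T\!\int_{s-\delta_0}^s d\tau\,ds$ by Fubini, and use Cauchy--Schwarz to separate the stochastic-integral factor (controlled in $L^2(\Omega)$ by Lemma~\ref{lem:L-infinity estimate}, of order $\delta_0^{-3/4}$) from the dissipation factor $\eps\int_0^T\!\int_y(\partial_y u_\eps)^2\,dy\,d\tau$, which is bounded in $L^2(\Omega)$ uniformly in $\eps$ by the gradient estimate~\eqref{gradient-esti} with $\phi''\equiv1$. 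The net power $\delta_0\cdot\delta_0^{-3/4}=\delta_0^{1/4}$ then gives $A_2(\delta,\delta_0)=\limsup_{\eps\to0}|A_2^{\eps}(\delta,\delta_0)|\le C(\beta,\psi)\,\delta_0^{1/4}\to0$ as $\delta_0\to0$, which is the assertion (cf.~\eqref{eq:def -A2}).

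The step I expect to require the most care is the decoupling in time within the gradient-squared term: the estimate~\eqref{gradient-esti} only controls $\eps\int_0^T\!\int_y(\partial_y u_\eps)^2$ after integrating over the \emph{whole} interval $(0,T)$, whereas the natural Cauchy--Schwarz pairing the factor $\|J[\beta''',\phi_{\delta,\delta_0}](s)\|_{L^\infty}$ with the dissipation restricted to the moving window $[s-\delta_0,s]$ tends to produce an $L^2$-in-time norm of the spatial dissipation, which~\eqref{gradient-esti} does not bound. I would therefore arrange the Fubini/Cauchy--Schwarz so that it is the global-in-time dissipation $\eps\int_0^T\!\int_y(\partial_y u_\eps)^2$ that gets used, at the price of a maximal-in-$s$ version of Lemma~\ref{lem:L-infinity estimate}, i.e.\ a bound on $E\big[\sup_{0\le s\le T}\|J[\beta''',\phi_{\delta,\delta_0}](s)\|_{L^\infty}^2\big]$ of order $\delta_0^{-3/2}$; establishing such a uniform-in-$\eps$, maximal-in-$s$ bound (e.g.\ by controlling $s$-increments of $J$ and invoking a Kolmogorov/Sobolev embedding in the variable $s$ as well) is the technical heart of the argument.
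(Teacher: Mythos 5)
Your proposal is correct and follows essentially the same route as the paper: the same split of $A_2^{\eps}(\delta,\delta_0)$ into an explicitly-$\eps$-weighted term in which both $y$-derivatives land on the test function (so that $\limsup_{\eps\downarrow 0}|I_1^\eps|=0$ by the uniform moment bounds) and the gradient-squared term $I_2^\eps$ involving $J[\beta^{\prime\prime\prime},\phi_{\delta,\delta_0}](s;y,u_\eps(\tau,y))\,\eps|\grad_y u_\eps(\tau,y)|^2$, which is then handled exactly as you describe, by Fubini extracting the factor $\delta_0$, pairing the \emph{global-in-time} dissipation $\varLambda_\eps(T)$ (bounded in $L^q(\Omega)$ uniformly in $\eps$ via \eqref{gradient-esti}) against a maximal-in-$s$ sup-norm bound on $J[\beta^{\prime\prime\prime},\phi_{\delta,\delta_0}]$. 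The ``technical heart'' you correctly flag is precisely what the paper supplies: it represents $J[\beta^{\prime\prime\prime},\phi_{\delta,\delta_0}](s;\cdot,\cdot)=-\int_{s-\delta_0}^s\rho_{\delta_0}^\prime(t-s)\,M^t_{s-\delta_0}[\beta^{\prime\prime\prime},\psi,\delta](\cdot,\cdot)\,dt$ and controls the $\delta_0$-window increments of the underlying martingale by the modulus-of-continuity estimate \cite[Lemma 4.28]{nualart:2008} (the Kolmogorov-type mechanism you anticipate), yielding \eqref{sup-estimate_3} with H\"{o}lder at $p>8$ in place of Cauchy--Schwarz and the conclusion $|I_2^\eps|\le C\delta_0^{\tilde a}$ for some unspecified $\tilde a>0$ rather than your claimed rate $\delta_0^{1/4}$ --- a harmless quantitative difference, since the lemma only requires some positive power of $\delta_0$.
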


\begin{proof} 
From the definition of $ A_2^{l,\eps}(\delta,\delta_0) $, it follows that 
\begin{align} 
    A_2^{\eps}(\delta,\delta_0)
    := & \lim_{l\goto \infty} A_2^{l,\eps}(\delta,\delta_0) \notag \\
 =& -E\Big[\int_{\Pi_{T}}\int_{s-\delta_0}^s J[\beta^{\prime\prime},\phi_{\delta,\delta_0}](s;y,u_\eps(\tau,y))
  \eps \Delta u_\eps(\tau,y)
  \,d\tau\,ds\,dy\Big]. \notag 
  \end{align} Hence
  \begin{align}
  &A_2^{\eps}(\delta,\delta_0)\notag\\
  = &E\Big[\int_{\Pi_{T}}\int_{s-\delta_0}^s  \int_0^T \int_x\eps \sigma(x,\tilde{u}(r,x))\Delta_{yy}
  \beta^{\prime}(\tilde{u}(r,x)-u_\eps(\tau,y)) \rho_{\delta_0}(r-s)\,\psi(s,y)\,\varrho_{\delta}(x-y) \notag \\
  & \hspace{6cm} \times \,dx \,dW(r)\,d\tau\,ds\,dy\Big]\notag  \\
  &- E\Big[\int_{\Pi_{T}}\int_{s-\delta_0}^s \int_0^T \int_x  \eps \sigma(x,\tilde{u}(r,x))
  \beta^{\prime\prime\prime}(\tilde{u}(r,x)-u_\eps(\tau,y)) \rho_{\delta_0}(r-s)\,\psi(s,y)\,\varrho_{\delta}(x-y)\notag \\
  &\hspace{6cm}\times |\grad_y u_\eps(\tau,y)|^2\,dx\,dW(r)
  \,d\tau\,ds\,dy\Big]\notag  \\
   &= E\Big[\int_{\Pi_{T}}\int_{s-\delta_0}^s \int_0^T \int_x  \eps \sigma(x,\tilde{u}(r,x))
  \beta^{\prime}(\tilde{u}(r,x)-u_\eps(\tau,y))
  \partial _{yy} \phi_{\delta,\delta_0}(r,x;s,y)\,dx\,dW(r)\,
  \,d\tau\,ds\,dy\Big]\notag  \\
  & - E\Big[\int_{\Pi_{T}}\int_{s-\delta_0}^s J[\beta^{\prime\prime\prime},\phi_{\delta,\delta_0}](s;y,u_\eps(\tau,y))
  \eps  |\grad_y u_\eps(\tau,y)|^2\,d\tau\,ds\,dy \Big]\notag \\
  &\equiv I_1^\eps + I_2^\eps.\notag 
\end{align}
 Now, we use the uniform moment estimates and conclude that
 \begin{align}
 \limsup_{\eps\downarrow  0} |I_1^\eps| =   \lim_{\eps\downarrow  0} |I_1^\eps| = 0.
 \end{align}
 Thus
 \begin{align}
 \limsup_{\eps\downarrow  0}\Big|A_2^{\eps}(\delta,\delta_0)\Big| \le  \limsup_{\eps\downarrow  0} |I_2^\eps|,
 \end{align} and need of the hour is to estimate $I_2^\eps$. Define 
 
 \begin{align*}
  M^t_{s-\delta_0}[\beta^{\prime\prime\prime},\psi,\delta](y,v)=\int_{s-\delta_0}^t \int_x \sigma(x,\tilde{u}(r,x))\beta^{\prime\prime\prime}(\tilde{u}(r,x)-v)
  \psi(s,y)\varrho_\delta(x-y)\,dx\,dW(r),
  \end{align*} where $t \ge s-\delta_0$.  We now invoke It\^{o}-product rule and obtain
  \begin{align*}
   J[\beta^{\prime\prime\prime},\phi_{\delta,\delta_0}](s;y,v)= -\int_{s-\delta_0}^s \rho_{\delta_0}^\prime(t-s)  M^t_{s-\delta_0}[\beta^{\prime\prime\prime},\psi,\delta](y,v)\,dt.
  \end{align*} Therefore
  \begin{align*}
  & || J[\beta^{\prime\prime\prime},\phi_{\delta,\delta_0}](s;\cdot,\cdot)||_{L^{\infty}(\R\times \R)} \le \frac{1}{\delta_0}
   \sup_{s-\delta_0 \le t \le s} ||  M^t_{s-\delta_0}[\beta^{\prime\prime\prime},\psi,\delta](\cdot,\cdot)||_{L^{\infty}(\R\times \R)} .\\
  \end{align*} In other words
  \begin{align}
  & E\Big[\sup_{0\le s \le T}  || J[\beta^{\prime\prime\prime},\phi_{\delta,\delta_0}](s;\cdot,\cdot)||_{L^{\infty}(\R\times \R)}\Big] \notag\\
   \le &\frac{1}{\delta_0} E\Big[\sup_{0\le s\le T;~s-\delta_0\le t<s} ||N_t[\beta^{\prime\prime\prime},\psi,\delta](\cdot,\cdot)
  -N_{s-\delta_0}[\beta^{\prime\prime\prime},\psi,\delta](\cdot,\cdot)||_{L^{\infty}(\R\times \R)}\Big] \label{sup-estimate_2}
   \end{align} where
   \begin{align*}
    N_t [\beta^{\prime\prime\prime},\psi,\delta](y,v)=\int_{0}^t \int_x \sigma(x,\tilde{u}(r,x))\beta^{\prime\prime\prime}(\tilde{u}(r,x)-v)
  \psi(s,y)\varrho_\delta(x-y)\,dx\,dW(r).
   \end{align*}
      By a certain modulus of continuity estimate \cite[Lemma 4.28, P 359]{nualart:2008} for paths of $N_t$, we have  
   \begin{align}
     \label{eq:modulus_of_continuity} E\Big[\sup_{s,t \in [0,T];~ |s-t|< \delta_0} ||N_t [\beta^{\prime\prime\prime},\psi,\delta](\cdot,\cdot)- N_s [\beta^{\prime\prime\prime},\psi,\delta](\cdot,\cdot)||_\infty^p\Big]
    \le C \delta_0^a 
       \end{align} for some $a>0$ and $p>8$.  We combine \eqref{eq:modulus_of_continuity}  and  \eqref{sup-estimate_2} to have
   \begin{align}
    E\Big[ \sup_{0\le s\le T}  || J[\beta^{\prime\prime\prime},\phi_{\delta,\delta_0}](s;\cdot,\cdot)||_{L^{\infty}(\R\times \R)}^p\Big]
    \le C\,\frac{1}{\delta_0^p} \,\delta_0^a \label{sup-estimate_3}
   \end{align} for some $a>0$ and $p>8$.

       Next, we define
   \begin{align*}
    \varLambda_\eps(t)=\int_0^t \eps ||\grad_y u_\eps(r)||_2^2.
   \end{align*} From the moment estimate in Proposition \ref{prop-moment-estimate} we have 
   \begin{align}
   \label{eq:energy_estimate} \sup_{\eps>0} E\Big[|\varLambda_\eps(T)|^p\Big] < \infty,\quad \text{ for}\quad  p=1,2,\cdots, T>0.
   \end{align}

 Finally, we now focus on $I_2^\eps$ and have
   
\begin{align}
  |I_2^\eps| \le &E\Big[\int_0^T\int_{|y|<C_\psi}\int_{s-\delta_0}^s  \sup_{0\le s\le T}||J[\beta^{\prime\prime\prime},\phi_{\delta,\delta_0}](s;\cdot,\cdot)||_\infty
  \eps  |\grad_y u_\eps(\tau,y)|^2\,d\tau\,dy\,ds \Big] \notag \\
  (\text{By Fubini theorem})\quad & \le E\Big[\sup_{0\le s\le T}||J[\beta^{\prime\prime\prime},\phi_{\delta,\delta_0}](s;\cdot,\cdot)||_\infty
   \int_{|y|<C_\psi} \int_{\tau = 0}^T\Big(\int_{s=\tau}^{\tau + \delta_0}\eps 
   |\grad_y u_\eps(\tau,y)|^2 ds\Big)\,d\tau\,dy \Big] \notag \\
   & =\delta_0 E\Big[\sup_{0\le s\le T}||J[\beta^{\prime\prime\prime},\phi_{\delta,\delta_0}](s;\cdot,\cdot)||_\infty
   \int_{|y|<C_\psi} \int_{\tau = 0}^T\eps  |\grad_y u_\eps(\tau,y)|^2 \,d\tau\,dy \Big] \notag \\
    & \le \delta_0 E\Big[\sup_{0\le s\le T}||J[\beta^{\prime\prime\prime},\phi_{\delta,\delta_0}](s;\cdot,\cdot)||_\infty
     \varLambda_\eps(T)\Big] \notag \\
   (\text{By H\"{o}lder with $p>8$})\quad  &\le \delta_0 \Big(E\sup_{0\le s\le T}||J[\beta^{\prime\prime\prime},\phi_{\delta,\delta_0}](s;\cdot,\cdot)||_\infty^p\Big)^\frac{1}{p}
  \Big(E[ | \varLambda_\eps(T)|^q]\Big)^\frac{1}{q}\notag \\
 (\text{By \eqref{sup-estimate_3} and \eqref{eq:energy_estimate}}) \quad & \le C \delta_0^{\tilde{a}},
 \end{align} for some $\tilde{a}>0$. In other words, there exists $\tilde{a}> 0$ such that 
  \begin{align}
 \limsup_{\eps\downarrow  0} |I_2^\eps| \le C(\beta,\psi)\delta_0^{\tilde{a}} \notag
 \end{align}
and hence 
\begin{align}
 A_2{(\delta,\delta_0)} \goto 0 ~ \text{as}~~ \delta_0 \goto 0.
\end{align}
  \end{proof}

 Finally, we  define
\begin{align}
  A_3(\delta,\delta_0)= \limsup_{\eps\downarrow  0}\lim_{l\goto 0} \Big| A_3^{l,\eps}(\delta,\delta_0)\Big|
\end{align}

\begin{lem}\label{finding_A_3} It holds that
\begin{align}
   A_3(\delta,\delta_0)\goto 0\quad \text{as} ~~ \delta_0 \goto 0.\notag
\end{align}
\end{lem}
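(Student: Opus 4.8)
The plan is to mirror the treatment of the term $I_2^\eps$ in the proof of Lemma \ref{finding_A_2}, which is in fact the harder analogue: the present term is structurally simpler because the factor $\eps|\grad_y u_\eps|^2$ is now replaced by $\sigma_\eps^2(y,u_\eps(\tau,y))$, which is directly controlled by the uniform moment bounds rather than by the energy estimate $\varLambda_\eps(T)$. First I would pass to the limit $l\goto 0$. Since $\{\rho_l\}$ is a sequence of mollifiers, $\rho_l(u_\eps(\tau,y)-v)\,dv$ concentrates at $v=u_\eps(\tau,y)$, and therefore, exactly as in the corresponding step of Lemma \ref{finding_A_2},
\begin{align*}
 A_3^{\eps}(\delta,\delta_0):=\lim_{l\goto 0}A_3^{l,\eps}(\delta,\delta_0)
 = \frac{1}{2}E\Big[\int_{\Pi_T}\int_{s-\delta_0}^s J[\beta^{\prime\prime\prime},\phi_{\delta,\delta_0}](s;y,u_\eps(\tau,y))\,\sigma_\eps^2(y,u_\eps(\tau,y))\,d\tau\,ds\,dy\Big].
\end{align*}

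Next I would extract the sup-norm of $J[\beta^{\prime\prime\prime},\phi_{\delta,\delta_0}]$ from the integral. Using that $\psi(s,y)$ is compactly supported in $y$, say $|y|<C_\psi$, Fubini's theorem converts the inner $s$-integral over $[\tau,\tau+\delta_0]$ into a factor $\delta_0$, so that
\begin{align*}
 |A_3^{\eps}(\delta,\delta_0)|\le \frac{\delta_0}{2}\,E\Big[\sup_{0\le s\le T}\|J[\beta^{\prime\prime\prime},\phi_{\delta,\delta_0}](s;\cdot,\cdot)\|_\infty\int_0^T\int_{|y|<C_\psi}\sigma_\eps^2(y,u_\eps(\tau,y))\,dy\,d\tau\Big].
\end{align*}
Applying H\"older's inequality with $p>8$ and conjugate exponent $q$ then separates the two factors.

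Now I would invoke the sup-in-$s$ bound \eqref{sup-estimate_3}, already secured in the proof of Lemma \ref{finding_A_2}, which gives $\big(E\sup_{0\le s\le T}\|J[\beta^{\prime\prime\prime},\phi_{\delta,\delta_0}](s;\cdot,\cdot)\|_\infty^p\big)^{1/p}\le C\delta_0^{(a-p)/p}$ for some $a>0$. For the remaining factor, the growth estimate \eqref{eq:purturb-approx} together with $\sigma_\eps^2\le Cg^2(y)(1+|u_\eps|)^2$ and the uniform moment bound \eqref{uni:moment-esti} yields $\big(E[(\int_0^T\int_{|y|<C_\psi}\sigma_\eps^2\,dy\,d\tau)^q]\big)^{1/q}\le C$ uniformly in $\eps$. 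Combining these gives $|A_3^{\eps}(\delta,\delta_0)|\le C\delta_0\cdot\delta_0^{(a-p)/p}=C\delta_0^{a/p}$, whence $A_3(\delta,\delta_0)=\limsup_{\eps\downarrow 0}|A_3^{\eps}(\delta,\delta_0)|\le C\delta_0^{a/p}\goto 0$ as $\delta_0\goto 0$.

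The only genuinely delicate ingredient, the estimate \eqref{sup-estimate_3}, rests on the modulus-of-continuity bound \cite[Lemma 4.28]{nualart:2008} for the martingale $N_t$ and has already been established. Consequently the main obstacle here is not analytic but purely a matter of bookkeeping: carefully justifying the $l\goto 0$ passage under the expectation and tracking the cancellation of powers of $\delta_0$ so that the net exponent $a/p$ remains strictly positive. No new compactness argument or martingale estimate is needed beyond those proved for Lemma \ref{finding_A_2}.
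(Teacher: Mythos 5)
Your proof is correct, but it takes a genuinely different and heavier route than the paper's. The paper never passes to the limit $l\goto 0$ explicitly nor touches the sup-in-time estimate \eqref{sup-estimate_3}: it bounds $|A_3^{l,\eps}(\delta,\delta_0)|$ uniformly in $l$ and $\eps$ by keeping the per-$s$ norm $||J[\beta^{\prime\prime\prime},\phi_{\delta,\delta_0}](s;\cdot,\cdot)||_\infty$ inside the $(s,\tau)$-integral, applying Cauchy--Schwartz in $\omega$ at each fixed $(s,\tau)$, and then invoking only the fixed-$s$ bound of Lemma \ref{lem:L-infinity estimate} (applied with $\beta^{\prime\prime\prime}$ in place of $\beta$, legitimate since $\beta^{\prime\prime},\beta^{\prime\prime\prime}$ have compact support), namely $\big(E||J(s;\cdot,\cdot)||_\infty^2\big)^{1/2}\le C\delta_0^{-3/4}$; the double time integral contributes a factor $\delta_0$, giving the explicit rate $C(\beta,\psi,T)\,\delta_0^{1/4}$. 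You instead pull $\sup_{0\le s\le T}$ inside the expectation, which forces you to invoke \eqref{sup-estimate_3} with $p>8$ and hence the modulus-of-continuity machinery of \cite[Lemma 4.28]{nualart:2008} --- precisely the apparatus the paper reserves for $A_2$, where it is unavoidable because the companion factor $\varLambda_\eps(T)$ there is a time-global random variable that cannot be handled pointwise in $s$. Here the factor $\sigma_\eps^2(y,u_\eps(\tau,y))$ is controlled at each fixed time, so the decoupling you perform is wasteful: your H\"{o}lder step with conjugate exponent $q$ requires $q$-th moments of $\int_0^T\int_{|y|<C_\psi}\sigma_\eps^2\,dy\,d\tau$ (which do follow from \eqref{uni:moment-esti} and $g\in L^\infty\cap L^2$ via Jensen's inequality with respect to the finite measure $g^2\,dy$, though you assert this without the argument), whereas the paper's per-$s$ Cauchy--Schwartz needs only fourth moments. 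The net outcomes also differ slightly in quality: the paper obtains the clean rate $\delta_0^{1/4}$, while your argument yields $\delta_0^{a/p}$ with the unspecified exponent $a>0$ inherited from \eqref{eq:modulus_of_continuity} --- sufficient for the lemma, but less explicit. Both proofs are valid; the paper's is the more economical one for this particular term.
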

\begin{proof} By integration by parts, we have
\begin{align*}
  & A_3^{l,\eps}(\delta,\delta_0) \\
  &=\frac{1}{2} E\Big[\int_{\R}\int_{\Pi_{T}} J[\beta^{\prime\prime\prime},\phi_{\delta,\delta_0}](s;y,v)\Big\{ \int_{s-\delta_0}^s \sigma_\eps^2(y,u_\eps(\tau,y)
) \rho_{l}( u_\eps(\tau,y)-v)\,d\tau \Big\}\,dy\,ds\,dv\Big].
  \end{align*} 
  Therefore
  \begin{align}
  | A_3^{l,\eps}(\delta,\delta_0)|  & \le E\Big[  \int_v \int_0^T\int_{{|y|<C_\psi}}\int_{s-\delta_0}^s
   ||J[\beta^{\prime\prime\prime},\phi_{\delta,\delta_0}](s;\cdot,\cdot)||_\infty \sigma_\eps^2(y,u_\eps(\tau,y)) \notag \\
  & \hspace{6cm} \times \rho_l(u_\eps(\tau,y)-v) \,d\tau dy\,ds\,dv \Big]\notag \\
   & = E\Big[ \int_0^T\int_{{|y|<C_\psi}}\int_{s-\delta_0}^s||J[\beta^{\prime\prime\prime},\phi_{\delta,\delta_0}](s;\cdot,\cdot)||_\infty \sigma_\eps^2(y,u_\eps(\tau,y))
  \,d\tau  \,dy\,ds \Big]\notag \\
   & \le E\Big[\int_0^T\int_{{|y|<C_\psi}}\int_{s-\delta_0}^s  ||J[\beta^{\prime\prime\prime},\phi_{\delta,\delta_0}](s;\cdot,\cdot)||_\infty  
  g^2(y)(1+|u_\eps(\tau,y)|^2)\,d\tau \,dy\,ds \Big]\notag \\
   &  \le C\int_0^T\int_{s-\delta_0}^s \Big(E ||J[\beta^{\prime\prime\prime},\phi_{\delta,\delta_0}](s;\cdot,\cdot)||_\infty^2\Big)^\frac{1}{2}   
 \Big( E\int_{|y|<C_\psi} g^4(y)(1+|u_\eps(\tau,y)|^4\,dy \Big)^\frac{1}{2}\,d\tau  \,ds \notag \\
 & \le \frac{ C(\beta,\psi)}{\delta_0^\frac{3}{4}} \int_0^T \int_{s-\delta_0}^s( 1 +E||u_\eps(\tau)||_4^4 )^{\frac 12}\,d\tau
\,ds \notag \\
 & \le C(\beta,\psi) \delta_0^\frac{1}{4} T \Big[ 1 + \sup_{\eps>0}\sup_{0\le t \le T} E ||u_\eps(t,\cdot)||_4^4 \Big]^\frac{1}{2}.
\end{align}
Thus
\begin{align}
 \limsup_{\eps\downarrow  0}\lim_{l\goto 0} \Big| A_3^{l,\eps}(\delta,\delta_0)\Big|\le C(\beta,\psi,T) \delta_0^\frac{1}{4} \notag 
\end{align}
and hence $A_3(\delta,\delta_0) $  has the desired property.
\end{proof}

\begin{lem}\label{finding_B}  It holds that
\begin{align}
 \lim_{\eps\downarrow 0}\lim_{l\downarrow 0} B^{\eps,l}(\delta,\delta_0)= & - E\Big[\int_{\Pi_T}\int_{\Pi_T}  \sigma(x,\tilde{u}(r,x))\sigma(y,v(r,y))\beta^{\prime\prime}(\tilde{u}(r,x)-v(r,y))\notag \\
&\hspace{3cm}\times \phi_{\delta,\delta_0}(r,x,s,y)\,dr\,dx \,dy \,ds \Big]
\end{align}
\end{lem}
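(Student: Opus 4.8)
The plan is to evaluate the two limits in the stated order: first collapse the inner mollifier by letting $l\downarrow 0$, and then pass to the vanishing viscosity limit $\eps\downarrow 0$ by invoking the Young-measure machinery of Lemma~\ref{conv-young-measure} together with the point-mass property of Lemma~\ref{lem:singular_support}.

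First I would treat the limit $l\downarrow 0$. Since $\{\rho_l\}$ is an approximate identity and $\beta^{\prime\prime}$ is continuous (indeed compactly supported), for each fixed $(r,x,y)$ one has
\begin{align*}
\int_{\R}\beta^{\prime\prime}(\tilde{u}(r,x)-v)\rho_l(u_\eps(r,y)-v)\,dv \;\goto\; \beta^{\prime\prime}(\tilde{u}(r,x)-u_\eps(r,y))\quad\text{as } l\downarrow 0.
\end{align*}
The compact support of $\beta^{\prime\prime}$ gives the $l$-independent bound $\|\beta^{\prime\prime}\|_\infty$ on the left-hand side, so that, combined with the linear growth of $\sigma,\sigma_\eps$ (assumption~\ref{A3}) and the moment estimates \eqref{uni:moment-esti} and \eqref{eq:exist-strong-entropy}, dominated convergence lets me pass to the limit under the expectation and all integrals. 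Using $\supp\rho_{\delta_0}\subset[-\delta_0,0]$ to rewrite the inner $r$-integral $\int_{s-\delta_0}^s$ as an integral over $\Pi_T$ (the factor $\rho_{\delta_0}(r-s)$ in $\phi_{\delta,\delta_0}$ vanishes outside $r\in[s-\delta_0,s]$), I obtain
\begin{align*}
\lim_{l\downarrow 0}B^{\eps,l}(\delta,\delta_0)= -E\Big[&\int_{\Pi_T}\int_{\Pi_T}\sigma(x,\tilde{u}(r,x))\sigma_\eps(y,u_\eps(r,y))\beta^{\prime\prime}(\tilde{u}(r,x)-u_\eps(r,y))\\
&\times\phi_{\delta,\delta_0}(r,x,s,y)\,dr\,dx\,dy\,ds\Big].
\end{align*}

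Next I would pass to $\eps\downarrow 0$ in two moves. I first replace $\sigma_\eps$ by $\sigma$: by \eqref{eq:purturb-approx} the incurred error is $\mathcal{O}(\eps)$, since $|\sigma_\eps(y,u)-\sigma(y,u)|\le C\eps\,g(y)(1+|u|)$ while $\beta^{\prime\prime}$ is bounded with compact support, and it vanishes thanks to \eqref{uni:moment-esti}. For the genuine limit I view $u_\eps$ as a Young measure in $\theta=(\omega;r,y)$ and define the coefficient
\begin{align*}
h(\omega;r,y,\xi):=\sigma(y,\xi)\int_x\int_s \sigma(x,\tilde{u}(r,x))\beta^{\prime\prime}(\tilde{u}(r,x)-\xi)\phi_{\delta,\delta_0}(r,x,s,y)\,ds\,dx,
\end{align*}
so that (after integrating out $x$ and $s$) the expression above equals $-\int_\Theta h(\theta,u_\eps(\theta))\,\mu(d\theta)$. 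Since $\beta^{\prime\prime}$ has compact support and $\sigma$ has linear growth, $h(\theta,\xi)$ is Caratheodory and $\{h(\theta,u_{\eps_n}(\theta))\}$ is uniformly integrable by the moment bounds; hence Lemma~\ref{conv-young-measure} yields convergence to $\int_\Theta[\int_\R h(\theta,\xi)\nu(\theta)(d\xi)]\mu(d\theta)$. Finally, the point-mass identity \eqref{point-mass-2} of Lemma~\ref{lem:singular_support}, i.e. $\nu(\theta)=\delta_{\bar{u}(\theta)}=\delta_{v(\theta)}$, collapses the inner integral to $h(\theta,v(\theta))$, and unfolding the definition of $h$ gives exactly the claimed right-hand side.

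The main obstacle I anticipate is the measurability required to apply Lemma~\ref{conv-young-measure}: the coefficient $h$ carries the randomness of the merely $\mathcal{F}_t$-adapted process $\tilde{u}$, whereas the Young measure $\nu$ was extracted against the predictable $\sigma$-field $\mathcal{P}_T$. Since only the $\eps$-dependent factor $u_\eps(r,y)$ is subject to the Young-measure limit while $h$ itself is $\eps$-independent, I would handle this in the spirit of the proof of Lemma~\ref{div-curl}, freezing the non-predictable randomness through a conditional-expectation / martingale-representation argument (testing against ${\bf 1}_B$, $B\in\mathcal{F}_T$, and replacing $E[{\bf 1}_B\mid\mathcal{F}_r]$ by a continuous martingale) so that the narrow convergence applies to a genuinely $\Sigma$-measurable integrand. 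The uniform integrability is then routine, given the compact support of $\beta^{\prime\prime}$ and the uniform moment bounds \eqref{uni:moment-esti} and \eqref{eq:exist-strong-entropy}.
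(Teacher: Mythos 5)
Your proposal is correct and follows essentially the same route as the paper's (much terser) proof: dominated convergence in $l$ using $\|\beta^{\prime\prime}\|_\infty<\infty$, removal of the $\sigma_\eps$-regularization via \eqref{eq:purturb-approx} and the uniform moment bounds, and then the passage $\eps\downarrow 0$ through the Young-measure convergence of Lemma \ref{conv-young-measure} combined with the point-mass identity \eqref{point-mass-2} of Lemma \ref{lem:singular_support} — which is exactly what the paper's phrase ``uniform integrability conditions along with approximation properties of $\sigma_\eps$'' is shorthand for. The one measurability obstacle you anticipate does not actually arise: in Section 5 the process $\tilde{u}$ is assumed $\mathcal{F}_t$-\emph{predictable} (not merely adapted, as in Definition \ref{strong-entropy-condition}), so your integrand $h$ is already $\Sigma$-measurable with compact $y$-support inherited from $\psi$, and the martingale-representation detour in the spirit of Lemma \ref{div-curl} is unnecessary.
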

\begin{proof}
 Since $|| \beta^{\prime\prime}(\cdot)||_{\infty} < \infty$, we can use dominated convergence theorem and  conclude
 \begin{align}
  &\lim_{l\goto 0} B^{\eps,l}(\delta,\delta_0)=-E\Big[\int_{\Pi_{T}}\int_x \int_{s-\delta_0}^s \beta^{\prime\prime}(\tilde{u}(r,x)-u_\eps(r,y))
\sigma(x,\tilde{u}(r,x))\sigma_\eps(y,u_\eps(r,y)) \notag \\
& \hspace{6cm} \times \phi_{\delta,\delta_0}(r,x,s,y)\,dr\,dx\,dy\,ds\Big].\notag\\
=&-E\Big[\int_{\Pi_{T}}\int_{\Pi_T} \beta^{\prime\prime}(\tilde{u}(r,x)-u_\eps(r,y))
\sigma(x,\tilde{u}(r,x))\sigma_\eps(y,u_\eps(r,y)) \notag \\
& \hspace{6cm} \times \phi_{\delta,\delta_0}(r,x,s,y)\,dr\,dx\,dy\,ds\Big]
 \end{align}
 We use the uniform integrability conditions along with approximation properties of $\sigma_\eps$ and pass to the limit
$ \eps \downarrow 0 $  to obtain
\begin{align}
 \lim_{\eps\downarrow 0}\lim_{l\downarrow 0} B^{\eps,l}(\delta,\delta_0)= & - E\Big[\int_{\Pi_T}\int_{\Pi_T}  \sigma(x,\tilde{u}(r,x))\sigma(y,v(r,y))\beta^{\prime\prime}(\tilde{u}(r,x)-v(r,y))\notag \\
&\hspace{3cm}\times \phi_{\delta,\delta_0}(r,x,s,y)\,dr\,dx \,dy \,ds \Big]\notag.
\end{align}
\end{proof}
 We can now finally wrap up the proof of Lemma \ref{lem:strong-entropy-condition}.
\begin{proof}[Proof of Lemma \ref{lem:strong-entropy-condition}]

We now simply choose $ A(\delta,\delta_0)=A_1(\delta,\delta_0) + A_2(\delta,\delta_0) + A_3(\delta,\delta_0) $.
 Note that, in view of \eqref{eq:strong-cond-approx},
  \begin{align*}
     &E\Big[\int_0^T \int_y\int_0^T\int_x \sigma(x,\tilde{u}(r,x))\beta^\prime(\tilde{u}(r,x)-v)
\phi_{\delta,\delta_0}(r,x,s,y)
 \,dx \,dW(r)\Big|_{v=v(s,y)}\,dy\,ds\Big] \notag\\
  =& \lim_{\eps\downarrow 0}\lim_{l\downarrow 0}  E \big[ Z_{\eps,\delta,\delta_0,l}\big]\\
  =&  \lim_{\eps\downarrow 0}\lim_{l\downarrow 0}  \Big[ A_1^{\eps, l}+A_2^{\eps, l}+A_3^{\eps, l}+B^{\eps, l}(\delta,\delta_0)\Big]\\
  \le &  \limsup_{\eps\downarrow 0}\lim_{l\downarrow 0}|A_1^{\eps, l}(\delta,\delta_0)| + \limsup_{\eps\downarrow 0}\lim_{l\downarrow 0}|A_2^{\eps, l}(\delta,\delta_0)| + \limsup_{\eps\downarrow 0}\lim_{l\downarrow 0}|A_3^{\eps, l}(\delta,\delta_0)|+ \lim_{\eps\downarrow 0}\lim_{l\downarrow 0}B^{\eps, l}\\
   =& A_1(\delta,\delta_0)+ A_2(\delta,\delta_0)+A_3(\delta,\delta_0)+\lim_{\eps\downarrow 0}\lim_{l\downarrow 0}B^{\eps, l}\\
   = & A(\delta,\delta_0) - E\Big[\int_{\Pi_T}\int_{\Pi_T}  \sigma(x,\tilde{u}(r,x))\sigma(y,v(r,y))\beta^{\prime\prime}(\tilde{u}(r,x)-v(r,y))\notag \\
&\hspace{3cm}\times \phi_{\delta,\delta_0}(r,x,s,y)\,dr\,dx \,dy \,ds \Big]
  \end{align*}
  where we have used Lemma \ref{finding_B}.  Furthermore, by  Lemmas 
  \ref{finding_A_1}-\ref{finding_A_3}, the function $A(\delta,\delta_0)$ has the desired property as $\delta_0 \rightarrow 0$.    
\end{proof}

We have seen from Lemma \ref{lem:entropysolution} that $v(t,x)=\bar{u}(t,x)$ is a stochastic entropy solution. Moreover,
we conclude  from Lemma \ref{lem:strong-entropy-condition} that $\bar{u}(t,x)$ is indeed a stochastic strong entropy solution of \eqref{eq:brown_stochconservation_laws}-\eqref{initial_cond}, which completes the proof of Theorem \ref{thm:existence}.

\section{A critique on the strong-in-time formulation }

In this final section, we will contest the suitability of strong-in-time formulation of \cite{nualart:2008} and try  to make a case for weak-in-time formulation. However, the issues that we are going to raise are purely technical in nature and do not any way disturb the broader message of \cite{nualart:2008}. We could not have emphasized more on the fact the article \cite{nualart:2008} is no less than a milestone in the area.

     For any $L^p$-valued solution process $u(\cdot, x)$ with continuous sample paths, it is easy to see that the strong-in-time and weak-in-time formulations are equivalent to each other. Furthermore, if it is not established that the solution process has continuous paths then weak-in-time formulation is certainly a more appropriate way to move forward. Just as in the deterministic case, the authors use vanishing viscosity method for existence in \cite{nualart:2008} and attempts have been made in  \cite{nualart:2008} to justify that the vanishing viscosity limit has continuous sample paths when treated as a $\mathcal{M}_0$-valued process. To be more precise, it is shown in \cite[Lemma 4.23, P 355]{nualart:2008}  that 
     \begin{align}
     \label{eq:conv-feng-nualart} \lim_{t\downarrow s}E\big[r(\mu_0(t), \mu_0(s))\big] = 0,
     \end{align} and a claim has been made that \eqref{eq:conv-feng-nualart} implies that $\mu_0(\cdot)$ has continuous sample paths as $\mathcal{M}_0$ valued process. We strongly disagree with the derivation of \eqref{eq:conv-feng-nualart} in the proof of  \cite[Lemma 4.23, P 355]{nualart:2008}. Moreover,  the claim that $\mu_0(\cdot)$ has continuous sample paths because of \eqref{eq:conv-feng-nualart} is also wrong. In fact, we make a counter claim that an estimate of type \eqref{eq:conv-feng-nualart} may not imply path continuity. To see this, let $N_t$ be the usual Poisson process with parameter $\lambda > 0$. Then 
        \begin{align}
\lim_{t \rightarrow s}E\big[d_{\R} (N_t, N_s)\big] = \lim_{t \rightarrow s}E\big[|N_t- N_s|\big] =   \lim_{t \rightarrow s} \lambda |t-s| =0,
     \end{align} but $N_t$ clearly  does not have continuous sample paths. Therefore, $\mu_0(\cdot)$ cannot be claimed to have continuous sample paths on the basis of \eqref{eq:conv-feng-nualart} alone. This invalidates the claim in \cite[Lemma 4.22, P 355]{nualart:2008} that $\mu_0(t)$ has trajectories in $C\big([0,\infty), \mathcal{M}_0\big)$, and puts a question mark next the entropy inequality  \cite[(74), P 355]{nualart:2008}. 
     
      Moreover, the proof \eqref{eq:conv-feng-nualart} in  \cite[Lemma 4.23, P 355]{nualart:2008} is incorrect due to the lapses in  \cite[Lemma 4.15, P 343]{nualart:2008}. 
     To elaborate on this point,  let us look at the proof  \cite[Lemma 4.15, P 343]{nualart:2008} where it is shown that 
          \begin{align}
       \label{eq:conv-feng-nualart-1}      \lim_{\eps\downarrow 0} E[d(\mu_\eps(\cdot), \mu_0(\cdot))]= \lim_{\eps\downarrow 0}\int_0^\infty e^{-t}E\big[\min\big(1, r(\mu_\eps(t), \mu_0(t))\big)\big]\,dt=0.
             \end{align}
             
  Clearly, \eqref{eq:conv-feng-nualart-1} only  implies that $\lim_{\eps\downarrow 0} E[r(\mu_\eps(t), \mu_0(t))]=0$ {\bf for almost every} $t\ge 0$, contrary to the claim in  \cite[Lemma 4.15, P 343]{nualart:2008} that $\lim_{\eps\downarrow 0} E[r(\mu_\eps(t), \mu_0(t))]=0$ {\bf for every} $t\ge 0$.  This jeopardizes  the claim that 
  \[\lim_{\eps\downarrow 0} \Big(\mu_\eps(t_1),........, \mu_\eps(t_m)\Big) = \Big(\mu_0(t_1),........, \mu_0(t_m) \Big)~\text{in probability}\] for each $0\le t_1\le \cdots \le t_m$. We object to the wording `{\it for each $0\le t_1\le \cdots \le t_m$}'. In our view, the correct wording should be  `{\it  $0\le t_1\le \cdots \le t_m$ where $t_i$'s are chosen from a set of full Lebesgue measure in $[0,\infty)$ }'. Hence, one would only be allowed to pass to the limit in $\eps$ in   \cite[(73), P 354]{nualart:2008} for {\bf almost every} $(t,s)\in [0,\infty)\times [0,\infty)$, and   \cite[(74), P 355]{nualart:2008} would be valid only for  almost every  $(t,s)\in [0,\infty)\times [0,\infty)$. 
               
             Therefore, it is fair to say that the vanishing viscosity limit does not have sufficiently clear point-wise picture in time for its paths, and it is worthwhile to go for the weak-in-time entropy formulation for  \eqref{eq:brown_stochconservation_laws}. It is worth mentioning that it may well be possible to prove the path continuity for the entropy solution, but the methods of \cite{nualart:2008} are not adequate for that. Also, the weak-in-time formulation has an immediate correspondence with kinetic formulation of \cite{Vovelle2010}. Though the model  that is considered in \cite{Vovelle2010} deals with periodic solutions, kinetic solutions are claimed to have continuous paths. It may be possible to develop kinetic solution framework in a general case, which might help to establish path continuity for our framework.




\end{document}